\documentclass[10pt,reqno]{amsproc}

\usepackage{fullpage}
\usepackage[dvipsnames]{xcolor}

\usepackage{
  amsmath, amsthm, amssymb, mathtools, dsfont, units,          
  graphicx, wrapfig, subfig, float,                            
  listings, color, inconsolata, pythonhighlight,               
  fancyhdr,
  hyperref,
  enumerate,
  enumitem,
  framed
}   

\usepackage{hyperref, lipsum} 
\usepackage{bm}
\usepackage{subfig}

\usepackage{newpxtext, newpxmath, inconsolata}
\usepackage{amsfonts}
\usepackage{pgfplots}
\pgfplotsset{compat=1.12}
\usepackage{tkz-fct}
\usepackage{svg}
\usepackage{tikz}
\usepackage{tikz-cd}
\usepackage{lipsum}
\usepackage{rank-2-roots}
\usepackage[utf8]{inputenc}
\usepackage{multicol}
\usepackage{multirow}

\hypersetup{colorlinks=true, linkcolor=red, citecolor=blue, filecolor=blue, urlcolor=blue}

\DeclareUnicodeCharacter{3BC}{$\pi$}
\DeclareUnicodeCharacter{3C0}{$\pi$}

\usepackage[bottom]{footmisc}

\allowdisplaybreaks

\usepackage[font={it,footnotesize}]{caption}

\usepackage{titlesec}
\titleformat{\section}{\large\bfseries}{\thesection\;\;\;}{0em}{}
\titleformat{\subsection}{\normalsize\bfseries\selectfont}{\thesubsection\;\;\;}{0em}{}

\setlist[itemize]{wide=0pt, leftmargin=16pt, labelwidth=10pt, align=left}

\graphicspath{{Images/}{../Images/}}

\usepackage[mathscr]{euscript}

\newcommand{\what}[1]{\widehat{#1}}

\newcommand{\SL}{\mathrm{SL}}

\newcommand{\QM}{\mathcal{QM}}

\newcommand{\dd}{\mathrm{d}}

\newcommand{\rE}{\mathrm{E}}

\newcommand{\bC}{\mathbb{C}}

\newcommand{\bR}{\mathbb{R}}

\newcommand{\bZ}{\mathbb{Z}}

\newcommand{\cE}{\mathcal{E}}

\newcommand{\cH}{\mathcal{H}}

\newcommand{\cL}{\mathcal{L}}
\newcommand{\cM}{\mathcal{M}}

\theoremstyle{definition}
\newtheorem{theorem}{Theorem}[section]
\newtheorem{proposition}[theorem]{Proposition}
\newtheorem{lemma}[theorem]{Lemma}
\newtheorem{corollary}[theorem]{Corollary}
\newtheorem{definition}[theorem]{Definition}
\newtheorem{example}[theorem]{Example}
\newtheorem{question}[theorem]{Question}
\newtheorem{conjecture}[theorem]{Conjecture}

\theoremstyle{remark}
\newtheorem{remark}[theorem]{Remark}

\newtheorem*{theorem*}{Theorem}
\newtheorem*{proposition*}{Proposition}
\newtheorem*{lemma*}{Lemma}
\newtheorem*{corollary*}{Corollary}
\newtheorem*{definition*}{Definition}
\newtheorem*{example*}{Example}
\newtheorem*{remark*}{Remark}

\newtheorem*{conjecture*}{Conjecture}

\usepackage{fancyhdr}
\newenvironment{red}{\relax\color{red}}{\hspace*{.5ex}\relax}
\newenvironment{blue}{\relax\color{blue}}{\hspace*{.5ex}\relax}
\newcommand{\ber}{\begin{red}}
\newcommand{\er}{\end{red}}
\newcommand{\beb}{\begin{blue}}
\newcommand{\eb}{\end{blue}}

\title{Inequalities involving polynomials and quasimodular forms}
\author{Seewoo Lee}
\date{}

\begin{document}

\maketitle

\begin{abstract}
In this paper, we study inequalities involving polynomials and quasimodular forms.
More precisely, we focus on the monotonicity of the functions of the form $t \mapsto t^m F(it)$ where $F$ is a quasimodular form and $m > 0$.
As an application, we construct infinitely many positive quasimodular forms of level $> 1$.
We also give alternative proofs of modular form inequalities used in the proof of optimality of Leech lattice packing and universal optimality of the lattice by Cohn, Kumar, Miller, Radchenko, and Viazovska.
\end{abstract}

\section{Introduction}

In 2016, Viazovska gave a surprisingly short and elegant proof of the optimality of $\rE_8$-sphere packing in dimension $8$ \cite{viazovska2017sphere}, based on the Cohn--Elkies' linear programming bound \cite{cohn2003new}.
Due to its simplicity, it took only one extra week to solve the problem for dimension 24, with Cohn, Kumar, Miller, and Radchenko \cite{cohn2017sphere}.
The main idea is to construct \emph{magic functions} that satisfy the desired properties for the Cohn--Elkies bound, and they could achieve this using \emph{quasimodular forms}.
In particular, the required positivity of the magic functions reduces to certain quasimodular form inequalities.
Later, the same group of people proved the stronger claim; the $\rE_8$ and Leech lattice are \emph{universally optimal}, i.e. they are minimizers of a large class of potential functions among all possible configurations in 8 and 24 dimensions \cite{cohn2022universal}.
The proof is based on the Fourier interpolation formula, where the authors need to prove inequalities involving quasimodular forms and polynomials that are considerably harder than the inequalities in the sphere packing paper.

The original proofs of the inequalities by Viazovska and Cohn et al. are based on rigorous numerical analysis, and it is natural to ask if there are more conceptual proofs of them.
Romik \cite{romik2023viazovska} gave such a proof for the inequalities for the 8-dimensional sphere packing, and the author gave alternative proofs for both 8 and 24-dimensional packings \cite{lee2024algebraic} which are algebraic in nature.
We may also ask if the inequalities for the universal optimality can be proved in a similar way, which is one of the main motivations of the paper.
The simplest but nontrivial inequality in \cite[p. 1067]{cohn2022universal} is (3b): if
\begin{align}
    \what{\cE}_{1, 0} &= 12 \pi i (-E_2 E_4 E_6 + E_6^2 + 720 \Delta) \\
    \what{\cE}_{1, 1} &= 2 \pi^2 (E_2^2 E_4 E_6 - 2 E_2 E_6^2 - 1728 E_2 \Delta + E_4^2 E_6) \\
    \what{\cE}_{1}(\tau) &= \tau \what{\cE}_{1, 1}(\tau) + \what{\cE}_{1, 0}(\tau),
\end{align}
then the inequality reads as
\begin{equation}
    \label{eqn:inequality3b}
    i \what{\cE_{1}}(it) > 0
\end{equation}
for all $t > 0$.
Here $E_2, E_4, E_6$ are Eisenstein series and $\Delta$ is the discriminant form (see \ref{subsec:prelim_qmf}).

In this paper, instead of only focusing on giving a new proof of \eqref{eqn:inequality3b}, we develop a more general framework to prove similar inequalities.
More precisely, we study the monotonicity of the functions of the form
\begin{equation}
    t \mapsto t^m F(it) \label{eqn:tmf}
\end{equation}
for a quasimodular form $F$ and $m > 0$, or equivalently, the positivity of the function
\begin{equation}
    t \mapsto - F(it) + \frac{2 \pi t}{m} F'(it). \label{eqn:tmf_deriv}
\end{equation}

We give several applications of the monotonicity of such functions.
First of all, it can be used to construct positive quasimodular forms of level $> 1$ (which are not completely positive in general, see Corollary \ref{cor:polymod_posdif}), and we use them to give another proof of one of the quasimodular form inequalities for the optimality of Leech lattice packing \cite{cohn2017sphere} (see Section \ref{subsec:application_packingineq}).
Also, we show that the inequality (3b) used in the proof of the universal optimality of the Leech lattice \cite{cohn2022universal} is equivalent to the monotonicity of \eqref{eqn:tmf} for $m = 11$ and $F = X_{12, 1}$, the extremal quasimodular form of weight $12$ and depth $1$ (see Section \ref{subsec:prelim_extqmf} for the definition), where the monotonicity of the corresponding function \eqref{eqn:tmf} can be proven by a general theorem (Proposition \ref{prop:polymod_tangent}).
Inspired from the above examples, we also propose several conjectures on the monotonicity when $F$ is another extremal quasimodular form (Conjecture \ref{conj:Xw1mono}) and the density of the positive coefficients of positive quasimodular forms (Conjecture \ref{conj:density}).
Accompanying Sage and Lean codes that checks certain identites and inequalities are available at the GitHub repository \url{https://github.com/seewoo5/posqmf}.

\subsection*{Acknowledgements}

We would like to thank Ken Ono, Sug Woo Shin and Maryna Viazovska for helpful comments.
Note that the proof of Lemma \ref{lem:X101weak} (in particular, the inequality \eqref{eqn:fpos}) was found by automated tools (OpenAI's ChatGPT5.2-Pro and AxiomMath's AxiomProver), with the latter providing formal verification in Lean along with the two inequalities \eqref{eqn:appendix_ineq1} and \eqref{eqn:appendix_ineq2}.

\section{Preliminaries}\label{sec:preliminaries}

\subsection{Modular forms and quasimodular forms}
\label{subsec:prelim_qmf}

\emph{Modular form} of weight $w$ and level $\Gamma \subseteq \SL_2(\bZ)$ is a holomorphic function $f:\cH \to \bC$ on the upper half-plane $\cH = \{ z \in \bC : \Im(z) > 0\}$ satisfying the transformation property
\[
f\left(\frac{az+b}{cz+d}\right) = (cz+d)^w f(z) \quad \text{for all } \begin{pmatrix} a & b \\ c & d \end{pmatrix} \in \Gamma
\]
and analytic conditions at cusps of $\Gamma$ (see e.g. \cite{diamond2005first}).
When $\left(\begin{smallmatrix}
    1 & 1 \\ 0 & 1
\end{smallmatrix}\right) \in \Gamma$, $f(z + 1) = f(z)$ and $f$ admits a Fourier expansion $f(z) = \sum_{n \ge 0} a_n q^n$ for $q = e^{2 \pi i z}$.
The standard examples include the Eisenstein series and the discriminant form
\begin{align}
    E_4(z) &= 1 + 240 \sum_{n \ge 1} \sigma_3(n) q^n \\
    E_6(z) &= 1 - 504 \sum_{n \ge 1} \sigma_5(n) q^n \\
    \Delta(z) &= \frac{E_4(z)^3 - E_6(z)^2}{1728} = q\prod_{n \ge 1}(1 - q^n)^{24} = \sum_{n \ge 1} \tau(n) q^n
\end{align}
where $\tau(n)$ is the Ramanujan tau function.
These modular forms have level $\SL_2(\bZ)$ and weight $4, 6, 12$, respectively, and the ring of modular forms of level $\SL_2(\bZ)$ is generated by $E_4$ and $E_6$.

\emph{Quasimodular forms} are holomorphic functions that satisfy ``almost-modular'' properties, i.e. not exactly invariant under the slash action, but the error term can be expressed in terms of a combination of rational terms and modular forms.
More precisely, a holomorphic function $F:\cH \to \bC$ is a \emph{quasimodular form} of weight $w$, depth $s$, and level $\Gamma$ if there exists holomorphic functions $F_0, F_1, \dots, F_s$ such that
\[
(F|_w \gamma)(z) := (cz + d)^{-w} F\left(\frac{az + b}{cz + d}\right) = \sum_{r=0}^{s} F_r(z)\left(\frac{c}{cz + d}\right)^r, \quad \text{for all }\gamma = \begin{pmatrix}
    a & b \\ c & d
\end{pmatrix} \in \Gamma.
\]
(See Section 5.3 of \cite{bruinier2008elliptic} for more details and applications of quasimodular forms.)
The most important example is the weight 2 Eisenstein series, which can be defined in terms of Fourier expansion:
\begin{equation}
    E_2(z) = 1 - 24 \sum_{n \ge 1} \sigma_1(n) q^n \label{eqn:e2fourier}
\end{equation}
which satisfies the following equation
\begin{equation}
    E_2\left(\frac{az + b}{cz + d}\right) = (cz + d)^2 E_2(z) + \frac{6c}{\pi i} (cz + d) \quad \text{for all } \begin{pmatrix} a & b \\ c & d \end{pmatrix} \in \SL_2(\bZ). \label{eqn:e2_transform}
\end{equation}
for any $z \in \cH$ and $\left(\begin{smallmatrix}
    a & b \\ c & d
\end{smallmatrix}\right) \in \SL_2(\bZ)$.
In particular, taking $\left(\begin{smallmatrix}a&b\\c&d \end{smallmatrix}\right)=\left(\begin{smallmatrix} 0 & -1 \\ 1 & 0 \end{smallmatrix}\right)$ and $z = it$ with $t > 0$, we have
\begin{equation}
    \label{eqn:e2_it}
    E_2\left(\frac{i}{t}\right) = -t^2 E_2(it) + \frac{6t}{\pi}.
\end{equation}
It is known that the ring of quasimodular forms of level $1$ is generated by the Eisenstein series $E_2, E_4$ and $E_6$ \cite{kaneko1995generalized}, and the depth of $F$ is equal to the degree of $E_2$ in the polynomial expression of $F$ in terms of $E_2, E_4, E_6$.

Every quasimodular form arises as a linear combination of derivatives of modular forms.
In particular, if $F$ is a quasimodular form of weight $w$ and depth $s$, its derivative $F' := \frac{1}{2\pi i} \frac{\dd F}{\dd z}$ is a quasimodular form of weight $w + 2$ and depth $\le s + 1$.
The derivatives can be computed using the product formula and Ramanujan's formulas:
\begin{equation}
    \label{eqn:ramanujan}
    E_2' = \frac{E_2^2 - E_4}{12}, \quad E_4' = \frac{E_2 E_4 - E_6}{3}, \quad E_6' = \frac{E_2 E_6 - E_4^2}{2}.
\end{equation}

The following identity on $E_2$ will be used later in one of Example \ref{ex:polymod_level}.
\begin{lemma}
    \begin{equation}
    6E_2(2z) = 4 E_2(2z) + E_2\left(\frac{z}{2}\right) + E_2\left(\frac{z+1}{2}\right). \label{eqn:e2_level2}
    \end{equation}
\end{lemma}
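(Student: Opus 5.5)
The plan is to prove the identity by comparing Fourier expansions coefficient by coefficient, using \eqref{eqn:e2fourier}. All four terms are holomorphic on $\cH$ and given by $q$-series that converge absolutely. So it suffices to check that the two sides have the same constant term and the same coefficient of $q^{n}$ for every $n \ge 1$, working in powers of $q = e^{2\pi i z}$.

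First, I expand each term. The term $E_2(2z) = 1 - 24\sum_{m\ge1}\sigma_1(m)q^{2m}$ only contributes at even exponents. For the two half-argument terms I use the standard averaging trick (the $U_2$-type projection). Since $e^{2\pi i (z+1)n/2} = (-1)^n e^{\pi i z n}$, we get
\[
E_2\left(\tfrac{z}{2}\right)+E_2\left(\tfrac{z+1}{2}\right) = 2 - 24\sum_{n\ge1}\sigma_1(n)\bigl(1+(-1)^n\bigr)e^{\pi i n z} = 2 - 48\sum_{m\ge1}\sigma_1(2m)q^{m}.
\]
The odd-indexed terms cancel, so the result is again a $q$-series. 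The constant terms then match: $6 = 4 + 1 + 1$.

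Second, I compare the coefficient of $q^m$ on both sides. This reduces the identity to the classical multiplicative relation for $\sigma_1$ at the prime $2$, which is the Hecke relation $T_2E_2 = \sigma_1(2)E_2 = 3E_2$:
\[
\sigma_1(2m) = 3\sigma_1(m) - 2\sigma_1(m/2), \qquad \text{with } \sigma_1(m/2) := 0 \text{ if } m \text{ is odd}.
\]
To prove this relation, I write $m = 2^a m'$ with $m'$ odd and use multiplicativity. This reduces it to $2^{a+2}-1 = 3(2^{a+1}-1) - 2(2^{a}-1)$ for $a \ge 0$, which is immediate.

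The main obstacle is bookkeeping: matching each term of the displayed equation to the correct piece of the Hecke relation. The $q^m$-coefficient on the right is $-96\,\sigma_1(m/2) - 48\,\sigma_1(2m)$. The Hecke relation says this equals $-144\,\sigma_1(m)$, which is the $q^m$-coefficient of $6E_2(z)$. So when I write out the left-hand side, I will carefully check that its coefficients have exactly this form, and that the normalization of the argument in the first term is consistent with that expansion. A useful sanity check is the $q^1$ coefficient, where only the $E_2\left(\tfrac{z}{2}\right)+E_2\left(\tfrac{z+1}{2}\right)$ terms contribute (giving $-48\sigma_1(2) = -144$). Once these coefficients agree for all $m$, the identity follows by uniqueness of Fourier expansions.
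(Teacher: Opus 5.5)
Your argument is correct and is essentially the paper's proof. Both compare Fourier coefficients and reduce to multiplicativity of $\sigma_1$ at the prime $2$. Your index $m$ is the paper's $n/2$, and the cancellation of odd exponents is its odd-$n$ case. Your single identity $2^{a+2}-1 = 3(2^{a+1}-1) - 2(2^{a}-1)$ covers its two cases $k=1$ and $k \ge 2$ at once. Reading the identity as $T_2E_2 = 3E_2$ is a nice conceptual gloss.

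You should state outright what your hedge about ``the normalization of the argument in the first term'' is circling. With the left side printed as $6E_2(2z)$, the statement is false: your own $q^1$ check gives $0$ on the left against $-144$ on the right. The identity you actually prove is $6E_2(z) = 4E_2(2z) + E_2\left(\tfrac{z}{2}\right) + E_2\left(\tfrac{z+1}{2}\right)$. The paper proves the same identity, since its coefficient relation $6\sigma_1(n/2) = 4\sigma_1(n/4) + \sigma_1(n)(1+(-1)^n)$ has $E_2(z)$ on the left.
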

\begin{proof}
    This is an Exercise 5.19 of \cite{romik2023topics}.
    For completeness, we provide a proof here. The main idea is to compare the Fourier coefficients of both sides. Comparing coefficients of $q^{n/2}$, \eqref{eqn:e2_level2} is equivalent to
    \[
    6 \sigma_1\left(\frac{n}{2}\right) = 4 \sigma_1\left(\frac{n}{4}\right) + \sigma_1(n) (1 + (-1)^n)
    \]
    where $\sigma_1(a) = 0$ if $a \notin \bZ_{\ge 0}$.
    When $n$ is odd, $\sigma_1(n/2) = \sigma_1(n/4) = 0$ and $1 + (-1)^n = 0$, so the equality holds.
    When $n$ is even, let $n = 2^k m$ where $m$ is odd and $k \ge 1$.
    If $k = 1$, then $\sigma_1(n/2) = \sigma_1(m)$ and $\sigma_1(n/4) = 0$, and the equation reduces to $6 \sigma_1(m) = 2 \sigma_1(2m)$, which follows from the multiplicativity of $\sigma_1$ and $\sigma_1(2) = 3$.
    For $k \ge 2$, again using multiplicativity, we have
    \[
    \sigma_1(n) = \sigma_1(2^k) \sigma_1(m) = (1 + 2 + 2^2 + \cdots + 2^k) \sigma_1(m) = (2^{k+1} - 1) \sigma_1(m)
    \]
    and similar holds for $\sigma_1(n/2)$ and $\sigma_1(n/4)$, and the equation reduces to $6(2^k - 1) = 4(2^{k-1} - 1) + 2(2^{k+1} - 1)$ which is true.
\end{proof}

\subsection{Positive quasimodular forms}
\label{subsec:prelim_posqmf}

One of the core step of Viazovska and Cohn et al.'s proof of the optimality of $E_8$ and Leech lattice packing \cite{viazovska2017sphere,cohn2017sphere} is to show that the \emph{magic functions} satisfy the desired linear constraints, which reduce to inequalities on quasimodular forms.
The original proofs in loc. cit. are based on the interval arithmetic and Sturm's bound, while the subsequent papers by Romik \cite{romik2023viazovska} and the author \cite{lee2024algebraic} provide more conceptual proof of the inequalities.
In particular, the author develops a general theory of \emph{positive} and \emph{completely positive} quasimodular forms in \cite{lee2024algebraic}, where the derivative and Serre derivative play key roles and gives simple and algebraic proofs of the quasimodular form inequalities that arise in the sphere packing papers.
The results in loc. cit., on positivity and (Serre) derivative, are summarized in Proposition \ref{prop:derpos}.

\begin{definition}[L. \cite{lee2024algebraic}]
    A quasimodular form $F$ is \emph{positive} if it takes nonnegative real values on the positive imaginary axis, i.e. $F(it) \ge 0$ for $t > 0$.
    A quasimodular form $F$ is \emph{completely positive} if all the Fourier coefficients of $F$ are real and nonnegative.
    We denote the set of positive (resp. completely positive) quasimodular forms of weight $w$, depth $s$, and level $\Gamma$ by $\QM_{w}^{s, +}(\Gamma)$ (resp. $\QM_{w}^{s, ++}(\Gamma)$).
\end{definition}

\begin{proposition}[L. \cite{lee2024algebraic}]
\label{prop:derpos}
Let $F = \sum_{n \geq n_0} a_n q^n \in \QM_{w}^{s}$ and $F' = \sum_{n \geq n_0} na_n q^n \in \QM_{w+2}^{s + 1}$.
For $k \in \bZ$, let $\partial_k F := F' - \frac{k}{12} E_2 F$ be the Serre derivative of $F$.
\begin{enumerate}
    \item If $F$ is a cusp form, $F \in \QM_{w}^{s, ++}$ if and only if $F' \in \QM_{w+2}^{s +1, ++}$. 
    \item If $F$ is a cusp form, then $F' \in \QM_{w +2}^{s +1, +}$ implies $F \in \QM_{w}^{s, +}$.
    \item $F$ is completely positive if and only if its derivatives are all positive. 
    \item If $\partial_{k} F \in \QM_{w+2}^{s+1, +}$ for some $k$ and all the Fourier coefficients of $F$ are real and the first nonzero Fourier coefficient of $F$ is positive, then $F \in \QM_{w}^{s, +}$.
    \item If $F \in \QM_{w}^{s,++}$ and $n_0 \geq k / 12 \geq 0$, then $\partial_k F \in \QM_{w+2}^{s+1, ++}$.
\end{enumerate}
\end{proposition}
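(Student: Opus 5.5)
The plan is to treat the five items separately. The tools are the $q$-expansion on the imaginary axis and an elementary ODE argument. Throughout, write $q = e^{-2\pi t}$ on $z = it$, so that $F(it) = \sum a_n e^{-2\pi n t}$. We also have $\frac{\dd}{\dd t} F(it) = -2\pi F'(it)$, since $F' = \frac{1}{2\pi i}\frac{\dd F}{\dd z}$ and $\frac{\dd z}{\dd t} = i$.

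For (1), completely positive means $a_n \ge 0$. If $F$ is a cusp form then $n_0 \ge 1$, so $n a_n \ge 0$ holds if and only if $a_n \ge 0$. The depth and weight bookkeeping is just the statement that $F'$ is quasimodular of weight $w+2$ and depth $\le s+1$.

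For (2), a cusp form decays exponentially, so $F(it) \to 0$ as $t \to \infty$. We then write $F(it) = \int_t^\infty 2\pi F'(iu)\,\dd u$. If $F'(iu) \ge 0$ for all $u$, the integral is nonnegative, hence $F(it) \ge 0$.

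For (3), one direction is immediate. If all $a_n \ge 0$, then every derivative $F^{(k)}$ has coefficients $n^k a_n \ge 0$, so each term is nonnegative on the imaginary axis. For the converse, suppose $F^{(k)}(it) \ge 0$ for all $k$. Then $g(t) = F(it)$ satisfies $(-1)^k g^{(k)}(t) = (2\pi)^k F^{(k)}(it) \ge 0$, so $g$ is completely monotone on $(0,\infty)$. By Bernstein's theorem, $g$ is the Laplace transform of a unique positive measure. Since $g(t) = \sum a_n e^{-2\pi n t}$ is a Dirichlet series with exponents bounded below, uniqueness of the Laplace transform identifies that measure with $\sum a_n \delta_{2\pi n}$, giving $a_n \ge 0$.

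An alternative route to (3) avoids Bernstein's theorem, and I expect this step to be the main obstacle. Suppose some $a_m < 0$. For fixed $t$, I would look at $\sum n^k a_n e^{-2\pi n t}$ and pick $t$ and large $k$ so that the $n = m$ term dominates. The factor $n^k e^{-2\pi n t}$ peaks near $n \approx k/(2\pi t)$, and choosing $t = k/(2\pi m)$ localizes the sum around $n = m$. This needs care with the tails, so I would fall back on the Bernstein argument.

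For (4), the function $g(t) = F(it)$ is real because the coefficients are real. The Serre derivative condition reads
\[
-\frac{1}{2\pi} g'(t) - \frac{k}{12} E_2(it) g(t) \ge 0 .
\]
Let $\phi(t) = \frac{2\pi k}{12}\int^t E_2(iu)\,\dd u$ and $h(t) = e^{\phi(t)} g(t)$. Then $h'(t) = e^{\phi}\bigl(g' + \frac{2\pi k}{12}E_2 g\bigr) \le 0$, so $h$ is nonincreasing.

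As $t \to \infty$ we have $E_2(iu) \to 1$, so $e^{\phi(t)} \sim C e^{2\pi k t/12}$. Also $g(t) \sim a_{n_0} e^{-2\pi n_0 t}$ with $a_{n_0} > 0$, so $g$ is positive for large $t$. Suppose $g(t_1) < 0$ at some $t_1$. Since $h$ is nonincreasing, $h(t) \le h(t_1) < 0$ for all $t > t_1$. This contradicts the positivity of $g$ for large $t$. Hence $g \ge 0$.

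For (5), expand
\[
\partial_k F = \sum n a_n q^n - \frac{k}{12}\Bigl(1 - 24\sum \sigma_1(m) q^m\Bigr)\sum a_n q^n .
\]
Collecting terms, this equals
\[
\sum \Bigl(n - \frac{k}{12}\Bigr) a_n q^n + 2k \sum_{m \ge 1}\sigma_1(m) q^m \sum_n a_n q^n .
\]
Each $a_n$ is nonzero only for $n \ge n_0 \ge k/12$, so $n - k/12 \ge 0$ on those terms. Also $k \ge 0$ and $a_n \ge 0$, so every coefficient is nonnegative. Finally, $\partial_k F$ is quasimodular of weight $w+2$ and depth $\le s+1$, since it is built from $F'$ and $E_2 F$.
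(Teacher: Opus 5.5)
Your proof is correct in all five parts. This paper gives no proof of the proposition; it quotes it from \cite{lee2024algebraic}, so there is no in-text argument to diverge from. Parts (1), (2) and (5) are the direct $q$-expansion and integration arguments. Part (4) is the usual Serre-derivative argument in disguise. Since $\Delta' = E_2\Delta$ and $\Delta(it)>0$, we have $\frac{\dd}{\dd t}\log\Delta(it) = -2\pi E_2(it)$. So your integrating factor $e^{\phi(t)}$ is a positive constant times $\Delta(it)^{-k/12}$. The monotonicity of $h$ is then just the identity $(F\Delta^{-k/12})' = \Delta^{-k/12}\,\partial_k F$ read on the imaginary axis. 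The asymptotics of $e^{\phi}$ that you mention are not needed; only $e^{\phi}>0$ matters.

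For (3), Bernstein's theorem plus uniqueness of the Laplace transform is valid. For the uniqueness step with the possibly infinite complex measure $\sum a_n\delta_{2\pi n}$, first multiply both measures by $e^{-x}$ to make them finite.

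The elementary route you set aside also closes without difficulty, and it avoids Bernstein's theorem. For $m\ge 1$, put $t_k = k/(2\pi m)$ and $\psi(x) = xe^{1-x}$, so that
\[
\Bigl(\frac{e}{m}\Bigr)^{k} F^{(k)}(it_k) = \sum_{n} a_n\,\psi(n/m)^{k}.
\]
The function $\psi$ satisfies $\psi(1)=1$, $0\le\psi(x)<1$ for $x\ne 1$, and $\psi(x)\le 2/e$ for $x\ge 2$. Hence the finitely many terms with $n\le 2m$, $n\ne m$, tend to $0$. The tail over $n>2m$ is bounded by $(2/e)^{k-1}\sum_{n}|a_n|\psi(n/m)$, which is finite because the $q$-series of $F'$ converges absolutely at $q=e^{-1/m}$. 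So the left side tends to $a_m$, which is therefore real and nonnegative. Finally, $a_0=\lim_{t\to\infty}F(it)\ge 0$ follows from the positivity of $F$ itself.
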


\subsection{Extremal quasimodular forms}
\label{subsec:prelim_extqmf}

Kaneko and Koike \cite{kaneko2006extremal} introduced the notion of \emph{extremal quasimodular forms}, which are quasimodular forms of highest vanishing order at infinity among all quasimodular forms of given weight and depth (of level 1).
Grabner \cite{grabner2020quasimodular} proved certain recurrence relations among extremal quasimodular forms of depth $\le 4$.
In particular, for depth $1$, he proved the following recurrence relation.

\begin{theorem}[Grabner {\cite[Proposition 6.1]{grabner2020quasimodular}}]
    \label{thm:Xw1rec_grabner}
    For $6 \mid w$ and $w \ge 6$, we have
    \begin{align}
        X_{w+2, 1} &= \frac{12}{w+1} \partial_{w-1}X_{w, 1} = \frac{12}{w+1}\left(X_{w, 1}' - \frac{w-1}{12} E_2 X_{w, 1}\right) \label{eqn:recXw2} \\
        X_{w+4, 1} &= E_4 X_{w, 1} \label{eqn:recXw4} \\
        X_{w+6, 1} &= \frac{w+6}{864(w+5)} (E_4 X_{w+2, 1} - E_6 X_{w, 1}). \label{eqn:recXw6}
    \end{align}
\end{theorem}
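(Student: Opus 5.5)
The plan is to use the characterization of extremal quasimodular forms of depth $1$ by weight, depth and vanishing order, together with the fact that the relevant spaces are small enough that these data determine the form up to a scalar. By Kaneko--Koike, the space $\QM_w^{\le 1}$ of level-one quasimodular forms of weight $w$ and depth $\le 1$ is $M_w \oplus E_2 M_{w-2}$. For $6 \mid w$ its dimension is $d_w := \dim M_w + \dim M_{w-2} = \tfrac{w}{6}+1$, and the extremal form $X_{w,1}$ is the unique normalized element vanishing to order $d_w - 1 = w/6$ at $\infty$. Analogous formulas hold for $w \equiv 2, 4 \pmod 6$, with vanishing orders $\lfloor w/6 \rfloor$ in both cases. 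I will use the normalization in which the leading coefficient is $1$, so $X_{w,1} = q^{\lfloor w/6\rfloor} + O(q^{\lfloor w/6\rfloor + 1})$, and I will check the constants $\tfrac{12}{w+1}$ and $\tfrac{w+6}{864(w+5)}$ against this normalization. Each recurrence then splits into three checks: the right-hand side has the correct weight, it has depth $\le 1$, and it vanishes to the correct order with the correct leading coefficient. Uniqueness of the extremal form then forces equality.

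For \eqref{eqn:recXw4}, write $w = 6k$, so $X_{w,1} = q^k + \cdots$. Then $E_4 X_{w,1}$ has weight $w+4$, depth $1$, and order $k = \lfloor (w+4)/6 \rfloor$. This is exactly the extremal order in weight $w+4$, since $\dim \QM_{w+4}^{\le 1} = k+1$. For \eqref{eqn:recXw2}, the Serre derivative $\partial_{w-1}$ maps weight $w$ to weight $w+2$. The key point is depth. Writing $X_{w,1} = A + E_2 B$ with $A \in M_w$ and $B \in M_{w-2}$, Ramanujan's formulas \eqref{eqn:ramanujan} give
\[
\partial_{w-1}(A + E_2 B) = \partial_w A + E_2\,\partial_{w-2}B - \frac{E_4 B}{12},
\]
in which all $E_2^2$ terms cancel exactly because the index is $w-1$. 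So the result has depth $\le 1$. Its $q$-expansion begins $\bigl(k - \tfrac{w-1}{12}\bigr)q^k + \cdots = \tfrac{w+1}{12} q^k + \cdots$, using $k = w/6$. Since $\lfloor (w+2)/6 \rfloor = k$, this is again the extremal order, and the factor $\tfrac{12}{w+1}$ normalizes the leading coefficient.

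For \eqref{eqn:recXw6}, the target is weight $w+6$ with extremal order $k+1$. The combination $E_4X_{w+2,1} - E_6X_{w,1}$ has weight $w+6$ and depth $\le 1$, and since both terms start with $q^k$, the leading terms cancel. I must show that the $q^{k+1}$ coefficient is nonzero and equals $\tfrac{864(w+5)}{w+6}$. This requires the second coefficients of $X_{w,1}$ and $X_{w+2,1}$, and this is the main obstacle. To get them, I will write $X_{w,1} = q^k(1 + c_w q + \cdots)$ and read off $c_{w+2}$ from \eqref{eqn:recXw2}. Applying $\partial_{w-1}$ and using $E_2 = 1 - 24q + \cdots$, the $q^{k+1}$ coefficient is $(k+1)c_w - \tfrac{w-1}{12}(c_w - 24)$, so
\[
c_{w+2} = \frac{12}{w+1}\Bigl[\Bigl(k+1-\tfrac{w-1}{12}\Bigr)c_w + 2(w-1)\Bigr].
\]
Then the $q^{k+1}$ coefficient of $E_4X_{w+2,1} - E_6X_{w,1}$ is $(240 + c_{w+2}) - (-504 + c_w) = 744 + c_{w+2} - c_w$. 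To determine $c_w$ itself, I plan to use the Kaneko--Koike differential equation satisfied by $X_{w,1}$, or an explicit hypergeometric-type formula for its coefficients, and then check that the expression simplifies to $\tfrac{864(w+5)}{w+6}$. A fallback is to verify the claim directly for $w=6$ and then propagate it by induction using the three recurrences. Once the leading coefficient is confirmed, uniqueness of the extremal form completes the proof.
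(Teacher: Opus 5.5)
The paper gives no proof of this statement (it is quoted from Grabner), so your argument has to stand on its own. The uniqueness framework is sound, and \eqref{eqn:recXw4} and \eqref{eqn:recXw2} go through as you describe. There is one harmless slip: in fact $\partial_{w-1}(A+E_2B) = \partial_w A - \frac{1}{12}E_4 B + E_2\bigl(\partial_{w-2}B + \frac{1}{12}A\bigr)$, so you dropped the $\frac{1}{12}E_2A$ term (compare \eqref{eqn:recAB2}). Depth $\le 1$ is unaffected.

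\textbf{The gap is in \eqref{eqn:recXw6}.} The substance of that identity is the value $744 + c_{w+2} - c_w = \frac{864(w+5)}{w+6}$, and you never determine $c_w$. Your fallback induction does not close. The identities \eqref{eqn:recXw2} and \eqref{eqn:recXw4} carry $c_w$ to $c_{w+2}$ and $c_{w+4}$. But extracting $c_{w+6}$ from \eqref{eqn:recXw6} needs the $q^{k+2}$-coefficients of $X_{w,1}$ and $X_{w+2,1}$, the next step needs still higher coefficients, and so on, so no finite set of data is propagated. The differential-equation route is viable, but as written it is only announced, not carried out.

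\textbf{How to close it within your own framework.} Repeat your depth computation with $w$ replaced by $w+2$. It shows that $\partial_{w+1}X_{w+2,1}$ has weight $w+4$, depth $\le 1$, and leading term $\bigl(k-\frac{w+1}{12}\bigr)q^k = \frac{w-1}{12}q^{k}$. Uniqueness then gives $X_{w+4,1} = \frac{12}{w-1}\partial_{w+1}X_{w+2,1}$. Combining this with \eqref{eqn:recXw2} and \eqref{eqn:recXw4} yields
\[
\partial_{w+1}\partial_{w-1}X_{w,1} = \frac{w^2-1}{144}\,E_4X_{w,1}.
\]
Expanded, this is $f'' - \frac{w}{6}E_2 f' + \frac{w(w-1)}{12}E_2' f = 0$ for $f = X_{w,1}$.

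Comparing $q^{k+1}$-coefficients, with $w=6k$, gives $(k+1)c_w = 12k(4k-1)$, so
\[
c_w = \frac{12k(4k-1)}{k+1}.
\]
This checks out on small cases: $c_6 = 18$ and $c_{12} = 56$.

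Your recursion for $c_{w+2}$ then gives
\[
c_{w+2}-c_w = \frac{12c_w + 24(w-1)}{w+1} = \frac{24(5k-1)}{k+1},
\]
and hence
\[
744 + c_{w+2} - c_w = \frac{144(6k+5)}{k+1} = \frac{864(w+5)}{w+6}.
\]
This is nonzero, so $E_4X_{w+2,1}-E_6X_{w,1}$ has exact order $k+1$ (the extremal order in weight $w+6$) with the required leading coefficient. Uniqueness then finishes \eqref{eqn:recXw6}.
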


In \cite{lee2024algebraic}, the author proved another recurrence relation for the extremal quasimodular forms of depth $1$, which gives a proof of the Kaneko and Koike's conjecture.

\begin{theorem}[L. {\cite[Theorem 4.3, Remark 4.5]{lee2024algebraic}}]
    \label{thm:Xw1rec}
    For $6 \mid w$ and $w \ge 12$, we have
    \begin{align}
        X_{w, 1}' &= \frac{5w}{72} X_{6, 1} X_{w-4, 1} + \frac{7w}{72} X_{8, 1} X_{w-6, 1}, \label{eqn:extqmf_rec1} \\
        X_{w+2, 1}' &= \frac{5w}{72} X_{6, 1} X_{w-2, 1} + \frac{7w}{72} X_{8, 1} X_{w-4, 1}, \label{eqn:extqmf_rec2} \\
        X_{w+4, 1}' &= 240 X_{6, 1} X_{w, 1} + \frac{7w}{72} X_{8, 1} X_{w-2, 1} + \frac{5w}{72} X_{10, 1} X_{w-4, 1}. \label{eqn:extqmf_rec3}
    \end{align}
\end{theorem}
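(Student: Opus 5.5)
The plan is to argue by vanishing order at $\infty$ combined with a depth reduction, and to treat the three identities in parallel. Throughout, $X_{w,1}$ is normalized to have leading coefficient $1$. Recall the vanishing orders of the depth-$1$ extremal forms: $X_{w,1}$ vanishes to order $w/6$ if $6\mid w$, to order $(w-2)/6$ if $w\equiv 2 \pmod 6$, and to order $(w-4)/6$ if $w\equiv 4\pmod 6$. The last case is consistent with \eqref{eqn:recXw4}. In particular $X_{6,1}$, $X_{8,1}$ and $X_{10,1}$ all vanish to order $1$, and explicitly
\[
X_{6,1}=\tfrac{1}{720}(E_2E_4-E_6),\qquad X_{8,1}=\tfrac{1}{1008}(E_2E_6-E_4^2).
\]

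\textbf{Step 1: leading terms.} For \eqref{eqn:extqmf_rec1}:
\begin{itemize}
\item $X_{w,1}'$ starts with $\frac{w}{6}q^{w/6}$.
\item $X_{6,1}X_{w-4,1}$ and $X_{8,1}X_{w-6,1}$ both start with $q^{1+(w-6)/6}=q^{w/6}$.
\item The leading coefficients therefore match, since $\frac{5w}{72}+\frac{7w}{72}=\frac{w}{6}$.
\end{itemize}
The same count works for \eqref{eqn:extqmf_rec2}, because $X_{w+2,1}$ also has order $w/6$. It also works for \eqref{eqn:extqmf_rec3}, because $X_{w+4,1}=E_4X_{w,1}$ has order $w/6$. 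In that case the extra term $240X_{6,1}X_{w,1}$ has order $w/6+1$ and does not affect the leading coefficient.

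Consequently, each difference $D=\text{LHS}-\text{RHS}$ is a quasimodular form of weight $w+2$, $w+4$ or $w+6$. Its depth is at most $2$, and it vanishes to order strictly greater than $w/6$.

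\textbf{Step 2: kill the depth-$2$ part.} Write $X_{k,1}=f_kE_2+g_k$ with $f_k\in M_{k-2}$ and $g_k\in M_k$. Using Ramanujan's formulas \eqref{eqn:ramanujan} and $f_k'=\partial_{k-2}f_k+\frac{k-2}{12}E_2f_k$, the $E_2^2$-coefficient of $X_{k,1}'$ is $\frac{k-1}{12}f_k$. We have $f_6=E_4/720$ and $f_8=E_6/1008$. For \eqref{eqn:extqmf_rec1}, the statement that $D$ has depth $\le 1$ is then the purely modular identity
\[
\tfrac{w-1}{12}f_w=\tfrac{5w}{72}f_6f_{w-4}+\tfrac{7w}{72}f_8f_{w-6}\quad\text{in } M_{w-2},
\]
and the other two identities give analogous relations. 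I would prove these by induction on $w$ in steps of $6$. The recurrences of Theorem \ref{thm:Xw1rec_grabner} translate into recurrences for the $f_k$:
\begin{itemize}
\item from \eqref{eqn:recXw2}, $f_{w+2}=\frac{12}{w+1}\bigl(\partial_{w-2}f_w+\tfrac{1}{12}\cdot(\ldots)\bigr)$, involving $g_w$ as well;
\item from \eqref{eqn:recXw4}, $f_{w+4}=E_4f_w$;
\item from \eqref{eqn:recXw6}, $f_{w+6}$ is expressed through $E_4f_{w+2}-E_6f_w$.
\end{itemize}
Substituting these into both sides should reduce the step $w\to w+6$ to the inductive hypothesis. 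The base case $w=12$ is a direct check in a space of dimension at most $2$.

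\textbf{Step 3: conclude.} Once $D$ has depth $\le 1$, it lies in a space of quasimodular forms of depth $\le 1$ and weight $w+2$ (respectively $w+4$, $w+6$). In each case the maximal possible vanishing order in that space is exactly $w/6$, namely the order of $X_{w+2,1}$, resp. of $X_{w+4,1}$, resp. of $X_{w+6,1}$ (the last being $(w+6)/6$, which is only $1$ larger and is checked separately via \eqref{eqn:recXw6}). A nonzero element of order $>w/6$ cannot exist there, so $D=0$. For \eqref{eqn:extqmf_rec3} I need to verify that the order bound still forces $D=0$; if it does not, I would compare one further coefficient, using that the extremal form of weight $w+6$ is unique up to scalar.

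The main obstacle is Step 2: establishing the modular identities among the $f_k$, especially controlling the $g_w$-terms that enter through the Serre-derivative recurrence \eqref{eqn:recXw2}. If the induction becomes unwieldy, the fallback is to carry the induction on the full quasimodular identities instead of their depth-$2$ parts. Concretely, I would differentiate \eqref{eqn:recXw6} and \eqref{eqn:recXw2} and substitute the inductive hypotheses directly, verifying the resulting identity in $E_2,E_4,E_6$ with the help of Step 1 and Step 3.
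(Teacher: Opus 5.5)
Your strategy is sound: match leading terms, kill the $E_2^2$-component, then use that no nonzero form in $\QM_{k}^{\le 1}$ vanishes to higher order than $X_{k,1}$. As written, though, there are real gaps. Step 2, which carries all the content, is only asserted. One input is wrong. And Step 3 does not cover \eqref{eqn:extqmf_rec3}.

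First, the sign: $X_{8,1}=(E_4^2-E_2E_6)/1008$ (your version starts with $-q$), so $f_8=-E_6/1008$, and with your sign the depth-two check fails. With the correct sign, and since $\frac{5w}{72}\cdot\frac{1}{720}=\frac{7w}{72}\cdot\frac{1}{1008}=\frac{w}{10368}$, the $E_2^2$-coefficient of the difference in \eqref{eqn:extqmf_rec1} is
\[
\tfrac{w-1}{12}f_w-\tfrac{w}{10368}\bigl(E_4f_{w-4}-E_6f_{w-6}\bigr).
\]
This is $\frac{w-1}{12}$ times the $E_2$-coefficient of \eqref{eqn:recXw6} with $w$ replaced by $w-6$, hence zero, so no induction is needed here.

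For \eqref{eqn:extqmf_rec2}, the same bookkeeping shows that what you actually need is
\[
f_{w+2}=\tfrac{w}{864(w+1)}\bigl(E_4f_{w-2}-E_6f_{w-4}\bigr).
\]
This is a six-step recurrence in the class $2 \pmod 6$, and it is not part of Theorem \ref{thm:Xw1rec_grabner}. Your suggested route via \eqref{eqn:recXw2} drags in $g_w$ and $\partial_{w-2}f_w$, and an induction on the depth-two identities alone does not visibly close. The missing lemma follows from your own Step 3 reasoning. The form $E_4X_{w-2,1}-E_6X_{w-4,1}\in\QM_{w+2}^{\le 1}$ has order $\ge w/6$, so it equals $cX_{w+2,1}$. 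The constant term of $f_k$ is $-\alpha_{k,0}$, so comparing constant terms of the $E_2$-coefficients via Lemma \ref{lem:n0coeff} gives $c=864(w+1)/w$.

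Second, for \eqref{eqn:extqmf_rec3} the order argument fails, as you suspected. After Step 2 the difference lies in $\QM_{w+6}^{\le 1}$, which contains $X_{w+6,1}$ of order $w/6+1$. So you only get $D=cX_{w+6,1}$, and forcing $c=0$ would require second Fourier coefficients of $X_{w,1},X_{w-2,1},X_{w-4,1}$ that you have not computed. None of that is needed. We have $E_4'=(E_2E_4-E_6)/3=240X_{6,1}$, and \eqref{eqn:recXw4} gives $X_{w+4,1}=E_4X_{w,1}$, $E_4X_{6,1}=X_{10,1}$ and $E_4X_{w-6,1}=X_{w-2,1}$. The product rule and \eqref{eqn:extqmf_rec1} then give
\[
X_{w+4,1}'=240X_{6,1}X_{w,1}+E_4X_{w,1}'=240X_{6,1}X_{w,1}+\tfrac{5w}{72}X_{10,1}X_{w-4,1}+\tfrac{7w}{72}X_{8,1}X_{w-2,1}.
\]
With these repairs your argument becomes a complete proof of all three identities. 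It uses only Theorem \ref{thm:Xw1rec_grabner} and Lemma \ref{lem:n0coeff}, neither of which depends on the statement, and needs no induction on $w$. The paper itself does not reprove the statement; it quotes it from \cite{lee2024algebraic}.
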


Since $X_{w, 1}$ is a depth 1 quasimodular forms, we can write it as
\begin{equation}
    \label{eqn:Xw1AB}
    X_{w, 1} = A_{w} + E_2 B_{w-2}
\end{equation}
where $A_{w}, B_{w-2}$ are modular forms of weight $w$ and $w-2$, respectively.
For the later purpose (Proposition \ref{prop:Xw1monot0}), we will prove a formula for the constant terms of $A_w$ and $B_{w-2}$.
Using Theorem \ref{thm:Xw1rec_grabner}, we can prove the following recurrence relations for $A_w$ and $B_{w-2}$.

\begin{proposition}
    \label{prop:AwBwm2rec}
    For $6 \mid w$ and $w \ge 12$, we have
    \begin{align}
        A_{w+2} &= \frac{12}{w+1} \left(\partial_w A_w - \frac{1}{12} E_4 B_{w-2}\right) \label{eqn:Aw2} \\
        A_{w+4} &= E_4 A_w \label{eqn:Aw4} \\
        A_{w+6} &= \frac{w+6}{864(w+5)} (E_4 A_{w+2} - E_6 A_{w}) \label{eqn:Aw6} \\
        B_{w} &= \frac{12}{w+1} \left(\frac{1}{12} A_w + \partial_{w-2} B_{w-2}\right) \label{eqn:Bw} \\
        B_{w+2} &= E_4 B_{w} \label{eqn:Bw2} \\
        B_{w+4} &= \frac{w+6}{864(w+5)} (E_4 B_{w+2} - E_6 B_{w}). \label{eqn:Bw4}
    \end{align}
\end{proposition}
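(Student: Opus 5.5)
The plan is to substitute the decomposition \eqref{eqn:Xw1AB} into each of Grabner's recurrences in Theorem \ref{thm:Xw1rec_grabner}. Then, using that $E_2$ is algebraically independent over the ring $\bC[E_4,E_6]$ of level $1$ modular forms, I would compare the $E_2$-free part and the $E_2$-linear part on both sides. The decomposition $X_{w,1}=A_w+E_2B_{w-2}$ is unique, since the depth is the degree in $E_2$. So each of \eqref{eqn:recXw2}, \eqref{eqn:recXw4}, \eqref{eqn:recXw6} yields exactly two identities among modular forms: one for the $A$'s and one for the $B$'s.

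For \eqref{eqn:recXw2}, I would write $\partial_{w-1}(A_w+E_2B_{w-2})$ using the Leibniz rule and Ramanujan's formula $E_2'=(E_2^2-E_4)/12$ from \eqref{eqn:ramanujan}. This gives
\[
A_w' + E_2 B_{w-2}' + \tfrac{E_2^2-E_4}{12}B_{w-2} - \tfrac{w-1}{12}E_2A_w - \tfrac{w-1}{12}E_2^2B_{w-2}.
\]
Next I would rewrite $A_w'=\partial_wA_w+\frac{w}{12}E_2A_w$ and $B_{w-2}'=\partial_{w-2}B_{w-2}+\frac{w-2}{12}E_2B_{w-2}$. With these substitutions the $E_2^2$ terms cancel, since $\frac{1}{12}+\frac{w-2}{12}-\frac{w-1}{12}=0$, as they must because the depth is $1$. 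The $E_2$-free part is then $\partial_wA_w-\frac{1}{12}E_4B_{w-2}$, which gives \eqref{eqn:Aw2}. The $E_2$-coefficient is $\frac{1}{12}A_w+\partial_{w-2}B_{w-2}$, which gives \eqref{eqn:Bw}. The only subtle point here is that Serre derivatives of modular forms are again modular, so no hidden $E_2$ remains.

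For \eqref{eqn:recXw4} and \eqref{eqn:recXw6}, the multipliers $E_4$ and $E_6$ are modular, so the comparison is immediate. The $E_2$-free parts give \eqref{eqn:Aw4} and \eqref{eqn:Aw6}. The $E_2$-linear parts give the relations between the $B$-parts: the $B$-part of $X_{w+4,1}$ is $E_4$ times the $B$-part of $X_{w,1}$, and the $B$-part of $X_{w+6,1}$ equals $\frac{w+6}{864(w+5)}\bigl(E_4\cdot(\text{$B$-part of }X_{w+2,1})-E_6\cdot(\text{$B$-part of }X_{w,1})\bigr)$. These are exactly \eqref{eqn:Bw2} and \eqref{eqn:Bw4}.

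The main obstacle is not analytic but bookkeeping of subscripts. In \eqref{eqn:Xw1AB} the $B$-part of $X_{v,1}$ is labelled by its weight $v-2$. Read literally with that convention, the right-hand side of \eqref{eqn:Bw2} would have weight $w+4$ while the left-hand side has weight $w+2$. So the identities \eqref{eqn:Bw2} and \eqref{eqn:Bw4} must be read with the $B$'s indexed consistently with \eqref{eqn:Bw}, namely as the $E_2$-coefficients of the forms appearing in \eqref{eqn:recXw4} and \eqref{eqn:recXw6}. I would state this convention explicitly and check that weights match on both sides before comparing coefficients. With the convention fixed, every line of the proposition follows from the coefficient comparison above.
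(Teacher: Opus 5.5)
Your proposal is correct and is essentially the paper's proof. You substitute $X_{w,1}=A_w+E_2B_{w-2}$ into Grabner's recurrences, expand using Ramanujan's formula and Serre derivatives, and compare the $E_2$-free and $E_2$-linear parts; the paper writes this out only for \eqref{eqn:recXw2} and says the rest is similar. Your weight check is also right, but this is better called a typo than a convention. With \eqref{eqn:Xw1AB} fixed, your comparison gives $B_{w+2}=E_4B_{w-2}$ and $B_{w+4}=\frac{w+6}{864(w+5)}(E_4B_w-E_6B_{w-2})$. No uniform reindexing can make the printed \eqref{eqn:Bw2} and \eqref{eqn:Bw4} true, because their index gaps are already wrong.
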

\begin{proof}
    From \eqref{eqn:recXw2}, we have
    \begin{align}
        X_{w+2, 1} &= A_{w+2} + E_2 B_{w} \label{eqn:recAB1} \\
        &= \frac{12}{w+1} \left(X_{w, 1}' - \frac{w-1}{12} E_2 X_{w, 1}\right) \nonumber \\
        &= \frac{12}{w+1} \left(A_w' + E_2 B_{w-2}' - \frac{w-1}{12} E_2 (A_w + E_2 B_{w-2})\right) \nonumber \\
        &= \frac{12}{w+1} \left(\partial_w A_w + \frac{w}{12} E_2 A_w + E_2 \left(\partial_{w-2} B_{w-2} + \frac{w-2}{12} E_2 B_{w-2}\right) + \frac{E_2^2 - E_4}{12} B_{w-2} \right. \nonumber \\
        &\quad\left.- \frac{w-1}{12} E_2 A_w - \frac{w-1}{12} E_2^2 B_{w-2}\right) \nonumber \\
        &= \frac{12}{w+1} \left(\left(\partial_w A_w - \frac{1}{12} E_4 B_{w-2}\right) + E_2 \left(\partial_{w-2} B_{w-2} + \frac{1}{12} A_w\right)\right) \label{eqn:recAB2}
    \end{align}
    and comparing \eqref{eqn:recAB1} and \eqref{eqn:recAB2} proves \eqref{eqn:Aw2} and \eqref{eqn:Bw}.
    The other relations follow similarly from \eqref{eqn:recXw4} and \eqref{eqn:recXw6}.
\end{proof}

Using these recurrence relations, we can find recurrence relations for the Fourier expansions
\begin{equation}
    A_w = \sum_{n \ge 0} \alpha_{w,n} q^n, \quad B_{w-2} = \sum_{n \ge 0} \beta_{w-2,n} q^n. \label{eqn:AwBwm2fourier}
\end{equation}

\begin{lemma}
    \label{lem:n0coeff}
    For $w \equiv 0 \pmod{6}$, the constant terms $\alpha_{w,0}$ and $\beta_{w-2,0}$ satisfy
    \begin{align}
        \alpha_{w+2,0} &= -\frac{w-1}{w+1} \alpha_{w,0} \label{eqn:alphaw2} \\
        \alpha_{w+4,0} &= \alpha_{w,0} \label{eqn:alphaw4} \\
        \alpha_{w+6,0} &= -\frac{w(w+6)}{432(w+1)(w+5)} \alpha_{w,0} \label{eqn:alphaw6}
    \end{align}
    and $\beta_{w,0} = -\alpha_{w+2,0}$ for all $w \ge 4$.
    In particular, $A_{w}$ and $B_{w-2}$ are not cusp forms for all $w \ge 6$.
    Also, for $w \equiv 0 \pmod{6}$, we have
    \begin{equation}
        \label{eqn:alphawclosed}
        \alpha_{w,0} = (-1)^{\frac{w}{6}}\frac{(w/6)! (w/3)! (w/2)! }{2w \cdot w!}.
    \end{equation}
\end{lemma}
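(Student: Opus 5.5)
We take constant terms in the recurrences of Proposition \ref{prop:AwBwm2rec}. The inputs are: the Serre derivative $\partial_k f = f' - \frac{k}{12}E_2 f$ has constant term $-\frac{k}{12}\cdot(\text{const. term of } f)$, since $f'$ has no constant term and $E_2, E_4, E_6$ have constant term $1$.

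\textbf{The relation $\beta_{w,0} = -\alpha_{w+2,0}$ first.} Since $X_{w,1}$ is extremal of weight $w\ge 4$, it vanishes at $\infty$ to positive order. Hence the constant term of $A_w + E_2 B_{w-2}$ is zero, i.e. $\alpha_{w,0} + \beta_{w-2,0} = 0$. Shifting $w \mapsto w+2$ gives $\beta_{w,0} = -\alpha_{w+2,0}$. This holds for all $w$, not only multiples of $6$.

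\textbf{The three recurrences.} For \eqref{eqn:alphaw2}, take constant terms in \eqref{eqn:Aw2}:
\[
\alpha_{w+2,0} = \frac{12}{w+1}\left(-\frac{w}{12}\alpha_{w,0} - \frac{1}{12}\beta_{w-2,0}\right) = \frac{-w\alpha_{w,0} + \alpha_{w,0}}{w+1} = -\frac{w-1}{w+1}\alpha_{w,0},
\]
using $\beta_{w-2,0} = -\alpha_{w,0}$.

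Equation \eqref{eqn:alphaw4} is immediate from \eqref{eqn:Aw4}.

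For \eqref{eqn:alphaw6}, take constant terms in \eqref{eqn:Aw6}:
\[
\alpha_{w+6,0} = \frac{w+6}{864(w+5)}\bigl(\alpha_{w+2,0} - \alpha_{w,0}\bigr).
\]
Substituting $\alpha_{w+2,0} = -\frac{w-1}{w+1}\alpha_{w,0}$, we get $\alpha_{w+2,0} - \alpha_{w,0} = -\frac{2w}{w+1}\alpha_{w,0}$. This gives the factor $-\frac{w(w+6)}{432(w+1)(w+5)}$.

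\textbf{Small weights.} For $w = 6$ the proposition is stated only for $w \ge 12$, so I would either run the same computation directly from Theorem \ref{thm:Xw1rec_grabner}, which holds for $w \ge 6$, or check the base cases explicitly. Concretely, $X_{6,1}$ is proportional to $E_2E_4 - E_6$. With Kaneko–Koike's normalization (leading coefficient $1$), $X_{6,1} = \frac{1}{720}(E_2E_4 - E_6)$, so $\alpha_{6,0} = -\frac{1}{720}$.

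\textbf{Non-vanishing.} Since $\frac{w-1}{w+1}$ and the factor in \eqref{eqn:alphaw6} are nonzero, induction from the base values $\alpha_{6,0}, \alpha_{8,0}, \alpha_{10,0} \neq 0$ gives $\alpha_{w,0} \neq 0$ for all $w \ge 6$. Then $\beta_{w-2,0} = -\alpha_{w,0} \neq 0$ as well, so neither $A_w$ nor $B_{w-2}$ is a cusp form.

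\textbf{Closed form.} For the closed form \eqref{eqn:alphawclosed}, write $w = 6k$ and verify by induction on $k$:
\begin{itemize}
\item Base case $k=1$: the formula gives $-\frac{1!\,2!\,3!}{12\cdot 720} = -\frac{12}{8640} = -\frac{1}{720}$, which matches.
\item Inductive step: the ratio of the formula at $w+6$ to that at $w$ is
\[
-\frac{(k+1)(2k+1)(2k+2)(3k+1)(3k+2)(3k+3)\cdot 2w}{2(w+6)\,(w+1)(w+2)\cdots(w+6)}.
\]
One checks that this equals $-\frac{w(w+6)}{432(w+1)(w+5)}$. With $w = 6k$ the numerator factors are $(w+6)/6$, $(w+3)/3$, $(w+6)/3$, $(w+2)/2$, $(w+4)/2$, $(w+6)/2$, and cancelling these against the denominator gives the claimed ratio.
\end{itemize}

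The main obstacle is pinning down the normalization of $X_{w,1}$ and the range of validity of the recurrences at $w = 6$. I expect the algebra itself to be routine.
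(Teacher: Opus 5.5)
Your proposal is correct and follows the paper's proof: you get $\beta_{w-2,0}=-\alpha_{w,0}$ from $X_{w,1}=O(q)$, compare constant terms in the recurrences for $A_w$, start from $\alpha_{6,0}=-1/720$, and obtain the closed form by induction via \eqref{eqn:alphaw6}. Two of your additions make explicit what the paper leaves unstated: the ratio check, and the remark that the $A,B$ recurrences already hold for $w\ge 6$ by Grabner's theorem, which the paper implicitly uses for the base step. The one small fix is that the vanishing at $\infty$ should be invoked for $X_{w,1}$ with $w\ge 6$, not $w\ge 4$; after shifting $w\mapsto w+2$ this gives exactly the stated range $w\ge 4$.
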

\begin{proof}
    Since $X_{w, 1} = A_w + E_2 B_{w-2} = O(q)$, we have $\beta_{w-2,0} = -\alpha_{w,0}$ by comparing constant terms.
    For $w \equiv 0 \pmod{6}$, by comparing the constant terms in \eqref{eqn:Aw2}, we have
    \[
        \alpha_{w+2,0} = \frac{12}{w+1} \left(-\frac{w}{12} \alpha_{w,0} - \frac{1}{12} \beta_{w-2,0}\right) = -\frac{w-1}{w+1} \alpha_{w,0}.
    \]
    The other two recurrence relations \eqref{eqn:alphaw4} and \eqref{eqn:alphaw6} can be proved similarly using \eqref{eqn:Aw4} and \eqref{eqn:Aw6}.
    From $X_{6, 1} = \frac{E_2 E_4 - E_6}{720}$, we have $\alpha_{6,0} = -\frac{1}{720}$ and $\beta_{4,0} = \frac{1}{720}$, so $\alpha_{w,0}, \beta_{w,0}$ are nonzero and $A_w$, $B_{w-2}$ are not cusp forms.
    Finally, the closed formula for $\alpha_{w,0}$ follows from \eqref{eqn:alphaw6}.
\end{proof}

\begin{lemma}
    \label{lem:n1coeff}
    For $w \ge 12$ with $6 \mid w$, we have
    \begin{align}
        \frac{\alpha_{w, 1}}{\alpha_{w, 0}} &= -\frac{12(w-3)(w+4)}{w-6} \\
        \frac{\alpha_{w+2,1}}{\alpha_{w+2,0}} &= - \frac{12(w^2 - 9w - 24)}{w-6} \\
        \frac{\alpha_{w+4,1}}{\alpha_{w+4,0}} &= - \frac{12(w^2 - 19w + 108)}{w-6} \\
        \frac{\beta_{w-2,1}}{\beta_{w-2,0}} &= -\frac{12(w-1)w}{w-6} \\
        \frac{\beta_{w,1}}{\beta_{w,0}} &= -\frac{12(w-12)(w+1)}{w-6} \\
        \frac{\beta_{w+2,1}}{\beta_{w+2,0}} &= - \frac{12(w^2 - 21w + 120)}{w-6} 
    \end{align}
\end{lemma}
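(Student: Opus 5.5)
The plan is to compare $q^1$-coefficients in the recurrences of Proposition \ref{prop:AwBwm2rec}, divide by constant terms, and use the ratios from Lemma \ref{lem:n0coeff}; then induct on $w$ in steps of $6$.

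\textbf{Coefficient rules.} For $f=\sum a_nq^n$ I will use the following elementary facts, all coming from $E_2=1-24q+\cdots$, $E_4=1+240q+\cdots$, $E_6=1-504q+\cdots$:
\begin{align*}
[f']_1&=a_1,\qquad [E_2f]_1=a_1-24a_0,\\
[\partial_k f]_1&=\Bigl(1-\tfrac{k}{12}\Bigr)a_1+2k\,a_0,\\
[E_4f]_1&=a_1+240a_0,\qquad [E_6f]_1=a_1-504a_0 .
\end{align*}

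\textbf{Deriving the $w+2$ and $w+4$ ratios from weight $w$.} Write $r_w=\alpha_{w,1}/\alpha_{w,0}$ and $s_{w-2}=\beta_{w-2,1}/\beta_{w-2,0}$.
\begin{itemize}
\item Apply the rules to \eqref{eqn:Aw2} and \eqref{eqn:Bw}. Divide by $\alpha_{w+2,0}$ and $\beta_{w,0}$, which are related to $\alpha_{w,0}$ by \eqref{eqn:alphaw2} and $\beta_{w,0}=-\alpha_{w+2,0}$. This expresses $r_{w+2}$ and $s_w$ as affine combinations of $r_w$ and $s_{w-2}$ with coefficients rational in $w$.
\item Then \eqref{eqn:Aw4} and \eqref{eqn:Bw2} give directly $r_{w+4}=r_w+240$ and $s_{w+2}=s_w+240$.
\item As a quick consistency check, $r_w+240$ should equal the claimed $-12(w^2-19w+108)/(w-6)$ when $r_w=-12(w-3)(w+4)/(w-6)$. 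Indeed $(w-3)(w+4)-20(w-6)=w^2-19w+108$.
\end{itemize}
So, given the first and fourth formulas at weight $w$, the remaining four follow by this algebra.

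\textbf{Step from $w$ to $w+6$.} I will use \eqref{eqn:Aw6} and \eqref{eqn:Bw4} to get $r_{w+6}$ and $s_{w+4}$. These involve the weight-$(w+2)$ and weight-$w$ data, whose constant terms are related by \eqref{eqn:alphaw2}, and the overall constant is fixed by \eqref{eqn:alphaw6}. I then verify that the resulting expression equals $-12(w+3)(w+10)/w$ (the first formula with $w$ replaced by $w+6$), and similarly for $\beta$.

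\textbf{Base case.} For $w=12$, compute $A_{12}$ and $B_{10}$ explicitly from $X_{12,1}$. The latter is obtained from $X_{6,1}=(E_2E_4-E_6)/720$ via \eqref{eqn:recXw6}, or from the explicit form of $X_{12,1}$. I then read off $\alpha_{12,1}/\alpha_{12,0}=-12\cdot 9\cdot 16/6=-288$ and $\beta_{10,1}/\beta_{10,0}=-12\cdot 11\cdot 12/6=-264$.

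\textbf{Independent check.} Since $X_{w,1}$ vanishes to order $w/6\ge 2$ at infinity for $w\ge 12$, its $q^1$-coefficient vanishes. This gives $\alpha_{w,1}+\beta_{w-2,1}-24\beta_{w-2,0}=0$, i.e.\ $r_w-s_{w-2}+24=0$. This holds for the claimed formulas, since $(w-3)(w+4)-w(w-1)=2(w-6)$, and the analogous identity checks the $w+2$ and $w+4$ pairs. I will use it to eliminate one unknown and simplify the inductive step.

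\textbf{Main obstacle.} The main obstacle is bookkeeping in the $w\to w+6$ step: keeping the constant-term normalizations straight, especially signs and the factor $\frac{w+6}{864(w+5)}$, so that the rational-function identity closes. I expect it to reduce to a polynomial identity in $w$ that can be checked by hand or in Sage.
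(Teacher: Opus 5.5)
Your strategy is the paper's own: compare $q^1$-coefficients in the recurrences of Proposition~\ref{prop:AwBwm2rec}, normalize by the constant terms from Lemma~\ref{lem:n0coeff}, and induct from $w=12$. Several pieces are fine: your coefficient rules, your base-case values, the derivation of $r_{w+2}$ and $s_w$ from \eqref{eqn:Aw2} and \eqref{eqn:Bw}, and $r_{w+4}=r_w+240$. Your $w\to w+6$ step for $r$ also closes. The factor $\frac{w+6}{864(w+5)}$ cancels in the ratio, so only \eqref{eqn:alphaw2} is needed: $r_{w+6}=\frac{(w-1)(r_{w+2}+240)+(w+1)(r_w-504)}{2w}=-\frac{12(w+3)(w+10)}{w}$.

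\textbf{The gap.} One step fails as written: $s_{w+2}=s_w+240$ is false. You took it from \eqref{eqn:Bw2} as printed, $B_{w+2}=E_4B_w$, which is a misprint; its two sides have weights $w+2$ and $w+4$. Comparing the $E_2$-parts of $X_{w+4,1}=E_4X_{w,1}$ gives $B_{w+2}=E_4B_{w-2}$, i.e.\ $s_{w+2}=s_{w-2}+240$. Concretely, $s_{12}=0$ because the $q$-coefficient of $7E_4^3+5E_6^2$ vanishes. Your rule would therefore predict $s_{14}=240$. In fact $B_{14}=E_4B_{10}$, so $s_{14}=-264+240=-24$, as the sixth formula says. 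For general $w$ your rule yields $-\frac{12(w^2-31w+108)}{w-6}$, so the sixth formula does not ``follow by this algebra''.

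The printed \eqref{eqn:Bw4}, which you plan to use for $s_{w+4}$, has the same weight defect and would also give a wrong value. The correct relation from \eqref{eqn:recXw6} is $B_{w+4}=\frac{w+6}{864(w+5)}(E_4B_w-E_6B_{w-2})$.

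\textbf{The fix.} The simplest repair is your own ``independent check''. Since $X_{v,1}=O(q^2)$ for every even $v\ge 12$, you have $s_{v-2}=r_v+24$. So every $\beta$-ratio is determined by the $\alpha$-ratios, and you only need \eqref{eqn:Aw2}, \eqref{eqn:Aw4} and \eqref{eqn:Aw6}. For example, $s_{w+2}=r_{w+4}+24=-\frac{12(w^2-21w+120)}{w-6}$ and $s_{w+4}=r_{w+6}+24=-\frac{12(w+5)(w+6)}{w}$. With this correction the induction goes through.
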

\begin{proof}
    Use induction on $w$.
    The base case $w = 12$ can be checked directly from
    \begin{align*}
        X_{12,1} &= \frac{-12 E_2 E_4 E_6 + 5E_4^3 + 7E_6^2}{3991680} = A_{12} + E_2 B_{10}, \\
        A_{12} &= \frac{5 E_4^3 + 7 E_6^2}{3991680} = \frac{1}{332640} - \frac{1}{1155}q + O(q^2), \quad B_{10} = -\frac{E_4 E_6}{332640} = -\frac{1}{332640} + \frac{1}{1260} q + O(q^2), \\
        X_{14, 1} &= \frac{7 E_2 E_4^3 + 5 E_2 E_6^2 - 12 E_4^2 E_6}{4717440} = A_{14} + E_2 B_{12}, \\
        A_{14} &= -\frac{E_4^2 E_6}{391320} = - \frac{1}{391320} + \frac{1}{16380} q + O(q^2), \quad B_{12} = \frac{7 E_4^3 + 5 E_6^2}{4717440} = \frac{1}{393120} + O(q^2), \\
        X_{16, 1} &= \frac{E_4 (-12 E_2 E_4 E_6 + 5 E_4^2 + 7 E_6^2)}{3991680} = A_{16} + E_2 B_{14}, \\
        A_{16} &= \frac{E_4 (5 E_4^2 + 7 E_6^2)}{3991680} = \frac{1}{332640} - \frac{1}{6930} q + O(q^2), \quad B_{14} = -\frac{E_4^2 E_6}{332640} = -\frac{1}{332640} + \frac{1}{13860} q + O(q^2).
    \end{align*}
    By comparing the coefficients of $q$ in \eqref{eqn:Aw2}, we have
    \begin{align*}
        \alpha_{w+2,1} &= \frac{12}{w+1} \left(2(w+10) \alpha_{w,0} + \left(-\frac{w}{12}+1\right)\alpha_{w,1} - \frac{1}{12} \beta_{w-2,1}\right)
    \end{align*}
    and thus
    \begin{align*}
        \frac{\alpha_{w+2,1}}{\alpha_{w+2,0}} &= \frac{12}{w+1} \left(2(w+10) \cdot \frac{\alpha_{w,0}}{\alpha_{w+2,0}} + \left(-\frac{w}{12}+1\right) \cdot \frac{\alpha_{w,0}}{\alpha_{w+2,0}} \frac{\alpha_{w,1}}{\alpha_{w,0}} + \frac{1}{12} \cdot \frac{\alpha_{w,0}}{\alpha_{w+2,0}} \frac{\beta_{w-2,1}}{\beta_{w-2,0}}\right) \\
        &= -\frac{12}{w-1} \left(2(w+10) + \left(-\frac{w}{12} + 1\right) \cdot \frac{-12(w-3)(w+4)}{w-6} + \frac{1}{12} \cdot -\frac{12(w-1)w}{w-6}\right) \\
        &= - \frac{12(w^2 - 9w - 24)}{w-6}.
    \end{align*}
    The other formulas can be proved similarly.
\end{proof}

\section{Monotonicity of $t^m F(it)$}
\label{sec:mono}

In this section, we provide several methods to prove the monotonicity of the function \eqref{eqn:tmf}, via Lambert series expansion (Section \ref{subsec:mono_lambert}) or reducing it to a quasimodular form inequality (Section \ref{subsec:mono_qmfineq}).

\subsection{Lambert series approach}
\label{subsec:mono_lambert}

For certain low-weight quasimodular forms $F(z)$, it is possible to prove monotonicity of $t^m F(it)$ via \emph{Lambert series}, i.e. expansions of the form
\[
F(z) = \sum_{n \ge 1} b_n \frac{q^n}{(1 - q^n)^k}.
\]
Once we know that $b_n \ge 0$ for all $n$, monotonicity of $t^m F(it)$ reduces to that of
\[
t \mapsto t^m \frac{e^{-t}}{(1 - e^{-t})^k},
\]
which are often easy to check.
For example, we can prove monotonicity of the following functions.

\begin{lemma}
    \label{lem:polymod_E2}
    The function $t \mapsto t^2 (1 - E_2(it))$ is monotone decreasing for $t > 0$.
\end{lemma}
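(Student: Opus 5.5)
We will use the Lambert series expansion of $1 - E_2$ described at the start of this subsection. From \eqref{eqn:e2fourier},
\[
1 - E_2(z) = 24 \sum_{n \ge 1} \sigma_1(n) q^n = 24 \sum_{d \ge 1} d \sum_{k \ge 1} q^{dk} = 24 \sum_{d \ge 1} d\, \frac{q^d}{1 - q^d},
\]
so on the imaginary axis, with $q = e^{-2\pi t}$,
\[
t^2\bigl(1 - E_2(it)\bigr) = 24 \sum_{d \ge 1} d\, \frac{t^2 e^{-2\pi d t}}{1 - e^{-2\pi d t}}.
\]
The coefficients $24d$ are positive, so it suffices to show that each summand is decreasing in $t$. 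Since the series of derivatives converges locally uniformly on $(0,\infty)$, a sum of decreasing functions is decreasing. One summand is in fact strictly decreasing, which gives strict monotonicity.

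For the summand with index $d$, substitute $x = 2\pi d t$. Then
\[
\frac{t^2 e^{-2\pi d t}}{1 - e^{-2\pi d t}} = \frac{1}{(2\pi d)^2}\, g(x), \qquad g(x) = \frac{x^2}{e^x - 1}.
\]
It therefore remains to check that $g$ is decreasing on $(0,\infty)$.

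This is the only step that needs real work, and it does not hold everywhere: $g$ is not monotone, since $g(x) \to 0$ as $x \to 0^+$. Indeed,
\[
g'(x) = \frac{x\bigl(2(e^x - 1) - x e^x\bigr)}{(e^x - 1)^2},
\]
and the factor $2(e^x-1) - xe^x$ is positive for small $x$ and negative only for $x > x_0 \approx 1.59$. So termwise monotonicity fails near $t = 0$, and I would handle the two ranges of $t$ separately.

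For large $t$, specifically $2\pi t \ge x_0$, every summand has $x = 2\pi d t \ge x_0$, so every summand is decreasing and so is the sum.

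For small $t$, I would use the transformation formula \eqref{eqn:e2_it}. Writing $s = 1/t$,
\[
t^2\bigl(1 - E_2(it)\bigr) = t^2 - t^2 E_2(it) = t^2 + E_2(is) - \frac{6t}{\pi}.
\]
Differentiating in $t$ gives
\[
2t - \frac{6}{\pi} - s^2 \frac{d}{ds}E_2(is).
\]
Here $-\frac{d}{ds}E_2(is) = -48\pi \sum_n n\sigma_1(n) e^{-2\pi n s}$ is negative and exponentially small when $s$ is large. The term $2t - 6/\pi$ is negative for $t < 3/\pi$, so the derivative is negative on $t < 3/\pi$. The threshold $x_0/(2\pi) \approx 0.25$ is well below $3/\pi \approx 0.95$, so the two ranges overlap and together cover all $t > 0$.

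Alternatively, one can write $1 - E_2 = 24 \sum_d \sigma_1$-type terms as $q^n/(1-q^n)^2$ (since $\sum_d d q^d/(1-q^d) = \sum_n q^n/(1-q^n)^2$) and check that $t^2 e^{-t}/(1-e^{-t})^2$ is decreasing, which is immediate from the convexity inequality $\sinh(u) \ge u$ behind $(u/\sinh u)^2$. This version is cleaner and gives termwise monotonicity for all $t$, so it avoids the case split entirely.
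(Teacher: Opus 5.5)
Your main argument is correct, but it takes a different route from the paper.

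The paper uses the other Lambert expansion $1-E_2=24\sum_{m\ge1}q^m/(1-q^m)^2$. Each summand then reduces to $g_2(t)=t^2e^{-t}/(1-e^{-t})^2=(u/\sinh u)^2$ with $u=t/2$. This is decreasing on all of $(0,\infty)$ (the paper's check $\tanh(t/2)<t/2$), so no case split is needed.

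You use $\sum_d d\,q^d/(1-q^d)$ instead, whose summands $x^2/(e^x-1)$ decrease only for $x\ge x_0$. Here $x_0\in(1,2)$ is the unique positive zero of $2(e^x-1)-xe^x$, since its derivative is $e^x(1-x)$ and its value at $x=2$ is $-2$. You cover $0<t\le 3/\pi$ with \eqref{eqn:e2_it}. There $t^2(1-E_2(it))=t^2-\frac{6t}{\pi}+E_2(i/t)$; the quadratic part is decreasing and $E_2(i/t)$ is decreasing in $t$. Finally, $x_0/(2\pi)<1/\pi<3/\pi$ gives the overlap.

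The paper's route is shorter and stays inside the termwise Lambert-series method of that subsection. Yours shows that for $E_2$ the transformation law alone settles the regime $t\to0^+$. That is exactly where termwise arguments tend to fail, as the paper notes for $E_4-1$ and $X_{6,1}$. In fact, with your split the Lambert series is not even needed. The raw $q$-expansion terms $t^2e^{-2\pi nt}$ have derivative $2t(1-\pi nt)e^{-2\pi nt}\le 0$ for $t\ge 1/\pi$, and $[1/\pi,\infty)$ already overlaps $(0,3/\pi]$.

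Your closing alternative is precisely the paper's proof, but the justification you give is not the right inequality. The bound $\sinh u\ge u$ only gives $u/\sinh u\le 1$; it does not show that $u/\sinh u$ is decreasing. What is needed is $u\cosh u>\sinh u$, i.e.\ $\tanh u<u$ (the paper's check). Equivalently, you can note that $\sinh u/u=\sum_{k\ge0}u^{2k}/(2k+1)!$ is increasing.
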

\begin{proof}
    From \eqref{eqn:e2fourier}, we have
    \[
    1 - E_2(z) = 24 \sum_{n \ge 1} \sigma_1(n) q^n = 24 \sum_{m, d \ge 1} d q^{dm} = 24 \sum_{m \ge 1} \frac{q^m}{(1 - q^m)^2},
    \]
    so it is enough to check that the following function is monotone in $t$:
    \[
        g_2(t) := t^2 \frac{e^{-t}}{(1 - e^{-t})^2}.
    \]
    This follows from
    \[
    \frac{\dd}{\dd t} g_2(t) = -\frac{e^t t (e^t (t - 2) + (t + 2))}{(e^t - 1)^3}
    \]
    and
    \[
    e^t(t - 2) + (t + 2) > 0 \Leftrightarrow \tanh\left(\frac{t}{2}\right) < \frac{t}{2},
    \]
    where the last inequality can be checked by considering $h(t) = t - \tanh(t)$, so that $h'(t) = \tanh^2(t) > 0$ and $h(0) = 0$.
\end{proof}

\begin{lemma}
    \label{lem:polymod_X42}
    The function $t \mapsto t^3 X_{4, 2}(it)$ is monotone decreasing for $t > 0$.
\end{lemma}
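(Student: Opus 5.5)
The plan is to reuse the Lambert series argument of Lemma \ref{lem:polymod_E2}. First I will pin down $X_{4,2}$ explicitly. The space of quasimodular forms of weight $4$ and depth $\le 2$ is spanned by $E_4$ and $E_2^2$. The extremal form is the normalized combination vanishing at $\infty$, namely $X_{4,2} = \frac{E_4 - E_2^2}{288}$. By Ramanujan's formula \eqref{eqn:ramanujan} this equals $-\frac{1}{24}E_2'$. Differentiating \eqref{eqn:e2fourier} termwise then gives
\[
X_{4,2} = \sum_{n \ge 1} n\sigma_1(n) q^n = q + O(q^2).
\]
This confirms the normalization. I will double-check the normalization convention of Kaneko--Koike, but a positive constant factor does not affect monotonicity anyway.

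Next, I will write this as a Lambert series with nonnegative coefficients. Apply $q\frac{\dd}{\dd q}$ to the identity $\sum_n \sigma_1(n)q^n = \sum_{m\ge1} \frac{q^m}{(1-q^m)^2}$ from the proof of Lemma \ref{lem:polymod_E2}. This yields
\[
X_{4,2}(z) = \sum_{m \ge 1} m \,\frac{q^m(1+q^m)}{(1-q^m)^3}.
\]
At $z = it$ we have $q^m = e^{-2\pi m t}$, and each coefficient $m$ is positive. Monotonicity therefore reduces to showing that the function $t \mapsto t^3 \frac{e^{-t}(1+e^{-t})}{(1-e^{-t})^3}$ is decreasing on $t>0$. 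Indeed, $t^3 \cdot \frac{e^{-2\pi m t}(1+\cdots)}{(\cdots)^3}$ is a positive multiple of this function evaluated at $2\pi m t$, and a sum of decreasing functions is decreasing.

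For this one-variable claim, I will put $u = t/2$ and simplify the function to $\frac{t^3\cosh u}{4\sinh^3 u}$. This is a positive constant times $u^3\cosh u/\sinh^3 u$. Its logarithmic derivative is $\frac3u + \tanh u - 3\coth u$, so I need this to be negative. Clearing the positive factor $u\sinh u\cosh u$ and substituting $s = 2u$, the inequality becomes
\[
h(s) := s(2 + \cosh s) - 3\sinh s > 0 \qquad (s>0).
\]
The main obstacle is exactly this elementary inequality. I will handle it by repeated differentiation. We have $h(0) = 0$, $h'(s) = 2 - 2\cosh s + s\sinh s$, and $h'(0)=0$. Then $h''(s) = s\cosh s - \sinh s$, which is positive for $s>0$ since $\tanh s < s$; this is the same fact used in Lemma \ref{lem:polymod_E2}. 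Integrating twice gives $h>0$, which completes the proof.
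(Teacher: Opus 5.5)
Your proof is correct and follows the paper's route: the same Lambert series $\sum_{m} m\,q^m(1+q^m)/(1-q^m)^3$, reducing everything to the monotonicity of $t^3 e^{-t}(1+e^{-t})/(1-e^{-t})^3$. Your final inequality $t(2+\cosh t)>3\sinh t$ is exactly the paper's condition $h_3(t)>0$, because $\frac{e^{2t}-1}{e^{2t}+4e^t+1}=\frac{\sinh t}{\cosh t+2}$; the paper proves it with a single derivative, $h_3'(t)=\frac{(e^t-1)^4}{3(e^{2t}+4e^t+1)^2}>0$, where you differentiate twice and use $\tanh s<s$.
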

\begin{proof}
    We can write $X_{4, 2}$ as
    \begin{equation*}
        X_{4, 2}(z) = \sum_{n \ge 1} n \sigma_1(n) q^n = \sum_{m \ge 1} \sum_{d \ge 1} d^{2} m q^{dm} = \sum_{m \ge 1} \frac{m q^m (1 + q^m)}{(1 - q^m)^3},
    \end{equation*}
    where the last equality follows from $\sum_{k \ge 1} k^2 x^k = \frac{x(1+x)}{(1 - x)^3} $.
    Hence the monotonicity of $t^3 X_{4, 2}(it)$ reduces to that of
    \begin{equation*}
        g_3(t) := t^3 \frac{e^{-t}(1 + e^{-t})}{(1 - e^{-t})^3}.
    \end{equation*}
    The derivative of $g_3(t)$ is
    \begin{equation*}
        \frac{\dd}{\dd t} g_3(t) = -\frac{e^t t^2 (t(e^{2t} + 4 e^t + 1) - 3(e^{2t} - 1))}{(e^t - 1)^4} = - \frac{3t^2 e^t (e^{2t} + 4e^t + 1)}{(e^t - 1)^4} \left(\frac{t}{3} - \frac{e^{2t} - 1}{e^{2t} + 4e^t + 1}\right).
    \end{equation*}
    If we put
    \[
    h_3(t) := \frac{t}{3} - \frac{e^{2t} - 1}{e^{2t} + 4e^t + 1},
    \]
    then $\dd g_3 / \dd t < 0$ if and only if $h_3(t) > 0$, which follows from $h_3(0) = 0$ and
    \begin{equation*}
        \frac{\dd}{\dd t} h_3(t) = \frac{(e^t - 1)^4}{3(e^{2t} + 4e^t + 1)^2} > 0.
    \end{equation*}
\end{proof}

\begin{lemma}
    \label{lem:polymod_D2}
    The function $t \mapsto t^2 (E_2(2it) - E_2(it))$ is monotone decreasing for $t > 0$.
\end{lemma}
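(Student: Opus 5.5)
Following the Lambert series approach of Lemma \ref{lem:polymod_E2}, I will expand $E_2(2z) - E_2(z)$ as a Lambert series with nonnegative coefficients. From \eqref{eqn:e2fourier},
\[
E_2(2z) - E_2(z) = 24 \sum_{n \ge 1} \sigma_1(n) q^n - 24 \sum_{n \ge 1} \sigma_1(n) q^{2n} = 24 \sum_{m \ge 1} \left( \frac{q^m}{(1-q^m)^2} - \frac{q^{2m}}{(1-q^{2m})^2} \right),
\]
using the identity $\sum_{n \ge 1}\sigma_1(n) x^n = \sum_{m\ge 1} x^m/(1-x^m)^2$ from the proof of Lemma \ref{lem:polymod_E2}. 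For the summand I will use
\[
\frac{x}{(1-x)^2} - \frac{x^2}{(1-x^2)^2} = \frac{x(1+x)^2 - x^2}{(1-x^2)^2} = \frac{x + x^2 + x^3}{(1-x^2)^2} = \frac{x(1+x+x^2)}{(1-x^2)^2}.
\]
Thus $E_2(2it) - E_2(it) = 24 \sum_{m \ge 1} \phi(mt)$, where $\phi(t) := \frac{e^{-t}(1+e^{-t}+e^{-2t})}{(1-e^{-2t})^2}$ after rescaling $2\pi t \mapsto t$. Scaling does not affect monotonicity, and since $t^2\phi(mt) = m^{-2}(mt)^2\phi(mt)$, each summand is monotone decreasing in $t$ as soon as $g(t) := t^2 \phi(t)$ is. So it suffices to show that $g$ is decreasing on $t > 0$.

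To do this, I will simplify $\phi$ using hyperbolic functions. Multiplying numerator and denominator by $e^{2t}$ gives
\[
\phi(t) = \frac{e^{t}(e^{t} \cdot e^{0} + 1 + e^{-t})\cdot e^{-t}\cdot e^{t}}{(e^{t}-e^{-t})^2}\cdot e^{-t} = \frac{1 + 2\cosh t}{4\sinh^2 t}.
\]
The expression can be checked directly: the numerator is $e^{t}+1+e^{-t} = 1+2\cosh t$, and the denominator is $(e^t - e^{-t})^2 = 4\sinh^2 t$. Hence $g(t) = \frac{t^2(1+2\cosh t)}{4\sinh^2 t}$. Taking logarithmic derivatives, $g' < 0$ is equivalent to
\[
\frac{2}{t} + \frac{2\sinh t}{1+2\cosh t} - \frac{2\cosh t}{\sinh t} < 0,
\]
that is, $\frac{1}{t} < \coth t - \frac{\sinh t}{1+2\cosh t}$. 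Putting the right-hand side over the common denominator gives $\frac{\cosh t + 2\cosh^2 t - \sinh^2 t}{\sinh t(1+2\cosh t)} = \frac{(1+\cosh t)^2}{\sinh t(1+2\cosh t)}$. The inequality therefore becomes
\[
h(t) := t(1+\cosh t)^2 - \sinh t\,(1+2\cosh t) > 0.
\]

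This last inequality is the only real obstacle, and I will handle it as in Lemma \ref{lem:polymod_X42}. First, $h(0) = 0$. Differentiating,
\[
h'(t) = (1+\cosh t)^2 + 2t\sinh t(1+\cosh t) - \cosh t(1+2\cosh t) - 2\sinh^2 t.
\]
The non-$t$ part simplifies to $1 + \cosh t - 2\sinh^2 t$, and this is not obviously positive. If that step gets messy, I will pass to half-angles, $u = t/2$, where $1+\cosh t = 2\cosh^2 u$ and $\sinh t = 2\sinh u\cosh u$. The inequality then reads $2t\cosh^3 u > \sinh u\,(1+2\cosh t)$, and I will prove it by a power series comparison or by differentiating once more. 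The coefficients of $t^n$ in $t(1+\cosh t)^2$ should dominate those of $\sinh t + \sinh 2t$ (note $2\sinh t\cosh t = \sinh 2t$), since $t(1+\cosh t)^2 = \tfrac{9}{4}t + \dots$ while $\sinh t + \sinh 2t = 3t + \dots$. The linear coefficients then go the wrong way, so the power series comparison must be done carefully or replaced by the derivative method. Checking the sign is the crux, and I expect the final step to be verifying that $h'(t) > 0$ after factoring out a manifestly positive factor.
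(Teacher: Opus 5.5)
Your Lambert-series reduction matches the paper's, and your formula $\phi(t)=\frac{1+2\cosh t}{4\sinh^2 t}$ and the logarithmic-derivative criterion are correct. The gap is in the next simplification: $\cosh t\,(1+2\cosh t)-\sinh^2 t=\cosh^2 t+\cosh t+1$, not $(1+\cosh t)^2$. So $g'<0$ is equivalent to
\[
t\,(\cosh^2 t+\cosh t+1) > \sinh t\,(1+2\cosh t).
\]
Your $h(t)=t(1+\cosh t)^2-\sinh t\,(1+2\cosh t)$ exceeds the correct quantity by $t\cosh t$, so proving $h>0$ does \emph{not} prove the lemma.

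The correct inequality is very tight. Both sides are $3t+\tfrac32 t^3+\cdots$, and their difference is $\tfrac{t^5}{10}+O(t^7)$, whereas your version has margin roughly $t$. There are further slips in the same passage. First, $t(1+\cosh t)^2=4t+\cdots$, not $\tfrac94 t+\cdots$, so your remark that the linear coefficients ``go the wrong way'' rests on a miscalculation. Second, the $t$-free part of your $h'$ is $\cosh t-3\sinh^2 t$, not $1+\cosh t-2\sinh^2 t$. Beyond this, the decisive inequality is only announced (``I expect the final step to be\dots''), not proved.

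The repair is short and uses the same pattern as the paper: $h(0)=0$ and $h'>0$. Define $h(t):=t(\cosh^2 t+\cosh t+1)-\sinh t\,(1+2\cosh t)$. Then $h(0)=0$ and
\[
h'(t)=\sinh t\,\bigl[t(1+2\cosh t)-3\sinh t\bigr].
\]
Call the bracket $k(t)$. It satisfies $k(0)=k'(0)=0$ and $k''(t)=\sinh t+2t\cosh t>0$, so $k>0$, hence $h'>0$ and $h>0$ for $t>0$.

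Alternatively, divide by $\cosh^2 t+\cosh t+1$ first:
\[
\frac{\dd}{\dd t}\left[t-\frac{\sinh t\,(1+2\cosh t)}{\cosh^2 t+\cosh t+1}\right]=\frac{(\cosh t-1)^2(\cosh t+1)(\cosh t+2)}{(\cosh^2 t+\cosh t+1)^2}>0 .
\]
The bracketed function is exactly $2h_{2,2}(t)$ from the paper, written in hyperbolic form.
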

\begin{proof}
    We can write $E_2(2z) - E_2(z)$ as
    \begin{equation*}
        E_2(2z) - E_2(z) = 24 \sum_{n \ge 1} \sigma_1(n) q^n - 24 \sum_{n \ge 1} \sigma_1(n) q^{2n} = 24\sum_{n \ge 1} \left(\frac{q^n}{(1 - q^n)^2} - \frac{q^{2n}}{(1 - q^{2n})^2}\right),
    \end{equation*}
    so it is enough to check that the following function is monotone in $t$:
    \[
        g_{2,2}(t) := t^2 \left(\frac{e^{-t}}{(1 - e^{-t})^2} - \frac{e^{-2t}}{(1 - e^{-2t})^2}\right) = t^2 \frac{e^{-t}(1 + e^{-t} + e^{-2t})}{(1 - e^{-2t})^2}.
    \]
    The derivative of $g_{2,2}(t)$ is
    \[
    \frac{\dd}{\dd t} g_{2, 2}(t) = - \frac{2t e^t (e^{4t} + 2e^{3t} + 6e^{2t} + 2e^t + 1)}{(e^{2t} - 1)^3} \left[\frac{t}{2} - \frac{(e^t + 1)(e^{3t} - 1)}{(e^{4t} + 2e^{3t} + 6e^{2t} + 2e^{t} + 1)}\right].
    \]
    If we put
    \[
    h_{2,2}(t) := \frac{t}{2} - \frac{(e^t + 1)(e^{3t} - 1)}{(e^{4t} + 2e^{3t} + 6e^{2t} + 2e^{t} + 1)},
    \]
    then $\dd g_{2,2} / \dd t < 0$ if and only if $h_{2,2}(t) > 0$, which follows from $h_{2,2}(0) = 0$ and
    \[
    \frac{\dd}{\dd t} h_{2,2}(t) = \frac{(e^t - 1)^4 (e^t + 1)^4 (e^{2t} + 4e^{t} + 1)}{2(e^{4t} + 2e^{3t} + 6e^{2t} + 2e^{t} + 1)^2} > 0.
    \]
\end{proof}

\begin{lemma}
    \label{lem:X81weak}
    The function $t \mapsto t^{6} X_{8, 1}(it)$ is monotone decreasing for $t > 0$.
\end{lemma}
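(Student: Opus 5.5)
The plan is to use the Lambert series approach of this subsection, as in Lemmas \ref{lem:polymod_E2} and \ref{lem:polymod_X42}. I expect the only real work to be one explicit one-variable inequality at the end.

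\emph{Step 1: identify $X_{8,1}$.} The weight $8$, depth $1$ space is spanned by $E_4^2$ and $E_2E_6$. Ramanujan's formula \eqref{eqn:ramanujan} gives $E_6' = (E_2E_6 - E_4^2)/2$, so $E_4^2 - E_2E_6 = -2E_6'$. This combination vanishes at infinity, so after normalising the leading coefficient,
\[
X_{8,1} = -\frac{E_6'}{504} = \sum_{n\ge1} n\sigma_5(n)q^n .
\]
Writing $n\sigma_5(n) = \sum_{dm=n} d^6 m$ gives
\[
X_{8,1}(z) = \sum_{m\ge1} m\, g(m s)\Big|_{q=e^{-s}}, \qquad g(s) := \sum_{d\ge1} d^6 e^{-ds} = \frac{e^{-s}A_6(e^{-s})}{(1-e^{-s})^7},
\]
where $A_6$ is the Eulerian polynomial. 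Here $s=2\pi t$ on the imaginary axis. The positive factor $2\pi$ is irrelevant for monotonicity.

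\emph{Step 2: reduce to a single function.} For each $m$ we have
\[
t^6\cdot m\,g(mt) = m^{-5}\,(mt)^6 g(mt).
\]
Since all coefficients are positive, it suffices to prove that $G(s) := s^6 g(s)$ is decreasing for $s>0$. We compute
\[
G'(s) = s^5\bigl(6g(s) + s g'(s)\bigr), \qquad g'(s) = -\sum_d d^7 e^{-ds} = -\frac{xA_7(x)}{(1-x)^8}, \quad x = e^{-s}.
\]
Hence $G'<0$ is equivalent to
\[
h(s) := s - \frac{6(1-e^{-s})A_6(e^{-s})}{A_7(e^{-s})} > 0 .
\]
This is exactly parallel to $h_3$ in Lemma \ref{lem:polymod_X42}.

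\emph{Step 3: prove $h>0$.} As $s\to0$ we have $x\to1$, so $(1-x)\to0$, and $A_6(1)=6!$ and $A_7(1)=7!$ are finite. Hence $h(0)=0$. It then suffices to show $h'(s)>0$ for $s>0$. Multiplying $A_6, A_7$ through by powers of $e^{s}$ and differentiating gives
\[
h'(s) = \frac{P(e^s)}{A_7(e^{-s})^2 e^{c s}}
\]
for some explicit integer polynomial $P$ and integer $c$. By analogy with Lemmas \ref{lem:polymod_X42} and \ref{lem:polymod_D2}, I expect $P(y)$ to factor as $(y-1)^k$ times a polynomial with nonnegative coefficients, for some even $k$. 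That would give $h'>0$ immediately.

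\emph{Main obstacle.} The hardest part is this explicit factorisation and sign check of $P$, which has fairly high degree (about $12$–$14$ in $y=e^s$). I would first compute it symbolically in Sage. If the cofactor does not have all coefficients nonnegative, I would fall back on one of two options:
\begin{itemize}
\item substitute $y = 1+u$ and check positivity of the coefficients in $u$;
\item handle small $s$ and large $s$ separately: use the series expansion of $h$ near $0$, and for large $s$ use the crude bound $6(1-x)A_6(x)/A_7(x)\le 6\cdot\max$, so that $h(s)>0$ once $s$ exceeds that constant.
\end{itemize}
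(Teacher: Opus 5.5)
Your Steps 1--2 and the setup of Step 3 are exactly the paper's argument. Because Eulerian polynomials are palindromic, your $h(s)$ equals the paper's $t - Q(e^t)/P(e^t)$, where $P = A_7$ and $Q(x) = 6(x-1)A_6(x)$. Your $h(0)=0$ is the paper's $Q(1)=0$.

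The gap is in the one step with real content: the sign of the numerator of $h'$. Write it as $R(y) = A_7(y)^2 - y\bigl(Q'(y)A_7(y) - Q(y)A_7'(y)\bigr)$; this is your ``$P$'' and the paper's $R$, with $h'(s) = R(e^s)/A_7(e^s)^2$. Your predicted certificate for it is wrong. In Lemma \ref{lem:polymod_X42}, $g_3$ has a finite limit at $0$ and $h_3'$ vanishes to order $4$. Here the exponent $6$ is one less than the natural exponent $7$ of the inner Lambert term, so $G(s)\sim 720/s$. Consequently $h'(0) = R(1)/A_7(1)^2 = 10!/(7!)^2 = 1/7 \neq 0$. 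So there is no factor $(y-1)^k$ with $k>0$. Moreover,
\[
R(y) = y^{12} - 144y^{11} + 5430y^{10} + 23408y^9 + \cdots + 5430y^2 - 144y + 1
\]
itself has negative coefficients.

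Your first fallback also fails: in $R(1+u)$ the coefficient of $u^{11}$ is $12 - 144 = -132 < 0$. Your second fallback is incomplete. The bound $6(1-x)A_6(x) < 6A_7(x)$ only gives $h>0$ for $s \ge 6$. The series expansion at $0$ gives positivity only on an unquantified neighbourhood of $0$. The interval in between is left uncovered.

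What closes the argument, and what the paper does, is completing the square on the extreme coefficients:
\[
R(y) = (y^6 - 72y^5)^2 + (1-72y)^2 + 246(y^{10}+y^2) + 23408(y^9+y^3) + 342687(y^8+y^4) + 754464(y^7+y^5) + 1377108\,y^6 .
\]
This is manifestly positive for all $y>0$, so $h'>0$. Together with $h(0)=0$, this finishes the proof along your own route.
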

\begin{proof}
    We can write $X_{8, 1}$ as
    \[
        X_{8, 1}(z) = \sum_{n \ge 1} n \sigma_5(n) q^n = \sum_{m \ge 1} \sum_{d \ge 1} d^6 m q^{dm} = \sum_{m \ge 1} \frac{m q^m (q^{5m} + 57 q^{4m} + 302 q^{3m} + 302 q^{2m} + 57 q^m + 1)}{(1 - q^m)^7}
    \]
    so it is enough to check that the following function is monotone in $t$:
    \[
    g_6(t) := t^6 \frac{e^{-t} (e^{-5t} + 57 e^{-4t} + 302 e^{-3t} + 302 e^{-2t} + 57 e^{-t} + 1)}{(1 - e^{-t})^7}.
    \]
    This follows from
    \[
    \frac{\dd}{\dd t} g_6(t) = -\frac{e^t t^5 (t P(e^t) - Q(e^t))}{(e^t - 1)^8}
    \]
    where $P(x)$ and $Q(x)$ are polynomials
    \[
    P(x) = x^6 + 120 x^5 + 1191 x^4 + 2416 x^3 + 1191 x^2 + 120 x + 1,\,\, Q(x) = 6 (x^6 + 56 x^5 + 245 x^4 - 245 x^2 - 56 x - 1)
    \]
    and $t P(e^t) - Q(e^t) > 0$ for $t > 0$ follows from
    \[
    \frac{\dd}{\dd t} \left[t - \frac{Q(e^t)}{P(e^t)}\right] = \frac{R(e^t)}{P(e^t)^2} > 0,
    \]
    where
    \begin{align*}
    R(x) &= (x^6 - 72 x^5)^2 + (1 - 72 x)^2 + 246 (x^{10} + x^2) + 23408 (x^9 + x^3) \\
    &\quad + 342687 (x^8 + x^4) + 754464 (x^7 + x^5) + 1377108 x^6
    \end{align*}
    which is positive for all $x > 1$, and $Q(1) = 0$.
\end{proof}

\begin{lemma}
    \label{lem:X101weak}
    The function $t \mapsto t^{8} X_{10, 1}(it)$ is monotone decreasing for $t > 0$.
\end{lemma}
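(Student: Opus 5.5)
The plan is to reduce the monotonicity to a lower bound for a logarithmic derivative, and to handle the factors of $X_{10,1}$ separately. By \eqref{eqn:recXw4} with $w=6$ we have $X_{10,1} = E_4 X_{6,1}$. By Ramanujan's formula \eqref{eqn:ramanujan}, $X_{6,1} = \frac{E_2E_4 - E_6}{720} = \frac{1}{240}E_4' = \sum_{n\ge1} n\sigma_3(n) q^n$.

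Since $\frac{\dd}{\dd t}F(it) = -2\pi F'(it)$, we get $\frac{\dd}{\dd t}\bigl[t^8 F(it)\bigr] = t^7\bigl(8F(it) - 2\pi t F'(it)\bigr)$. For $F = E_4X_{6,1}$, which is positive on the imaginary axis, monotone decrease is therefore equivalent to
\[
a(t) + b(t) > 8, \qquad a(t) := \frac{2\pi t E_4'(it)}{E_4(it)},\quad b(t) := \frac{2\pi t X_{6,1}'(it)}{X_{6,1}(it)}.
\]
Both $a$ and $b$ are positive, since $E_4'$ and $X_{6,1}'$ have nonnegative coefficients. Moreover $a$ satisfies an exact functional equation. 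Differentiating $E_4(i/t) = t^4E_4(it)$ logarithmically in $t$ gives $a(1/t) = 4 - a(t)$. In particular $a(1) = 2$, $a(t)\to 4$ as $t\to0$, and $a(t)\to 0$ as $t\to\infty$.

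First I would establish the Lambert bound $b(t) \ge 5$, i.e.\ that $t^5 X_{6,1}(it)$ is decreasing. This proceeds exactly as in Lemmas \ref{lem:polymod_X42} and \ref{lem:X81weak}. Write
\[
X_{6,1} = \sum_{m\ge1} \frac{m q^m(1+11q^m+11q^{2m}+q^{3m})}{(1-q^m)^5},
\]
and reduce to $g(t) = t^5 e^{-t}(1+11e^{-t}+11e^{-2t}+e^{-3t})/(1-e^{-t})^5$. The sign of $g'$ becomes $tP(e^t) - Q(e^t) > 0$ for explicit polynomials $P$ and $Q$ with $Q(1) = 0$. This is settled by showing that $\frac{\dd}{\dd t}[t - Q/P] = R(e^t)/P(e^t)^2$ with $R$ having a visibly positive sum-of-squares form for $x>1$.

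With this bound it suffices to show $a(t) > 3$ wherever $b$ is only known to be $\ge 5$. For large $t$ this is the wrong way around: there $a$ is small but $b$ is huge, since $X_{6,1}'/X_{6,1} \ge 1$ (because $X_{6,1}$ starts at $q^1$) gives $b(t) \ge 2\pi t$. Hence I split at $t = 1$. For $t \ge t_1$ with $2\pi t_1 \ge 8$ (so $t_1 \approx 1.28$), we have $b > 8$ and we are done. For $t \le 1$ we have $a(t) = 4 - a(1/t)$, so it suffices that $a(s) < 1$ for $s\ge1$. This reduces to the explicit bound $2\pi s \cdot 240\sum n\sigma_3(n)e^{-2\pi ns} < E_4(is)$, which holds at $s=1$ with room to spare: the left side is about $1440\pi e^{-2\pi} \approx 8.4$, but it needs care since this is not small. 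So I would instead use the sharper estimate $b(t) \ge 2\pi t\,(1 + \text{tail})$, or a refined Lambert bound.

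The main obstacle is the middle window $t\in[1, 1.3]$, and more generally making the crude bounds $b \ge 5$ and $b\ge 2\pi t$ overlap with the bounds on $a$. There I would first try the improved bound $b(t) \ge \max(5,\, 2\pi t)$ combined with $a(t) \ge 4 - a(1/t) \ge 4 - 2\pi t^{-1}\cdot 240\, e^{-2\pi/t}\cdot(\text{geometric tail})$. Failing that, I would follow the paper's style and prove a single inequality of the form $8F - 2\pi t F' > 0$ directly. This would multiply out the two Lambert series into an explicit exponential-polynomial inequality on the window, truncate the $q$-expansions with explicit tail bounds, and verify the resulting one-variable inequality by a derivative and sum-of-squares argument as in Lemma \ref{lem:X81weak}.
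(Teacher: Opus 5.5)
Your proposal has two genuine gaps, and it misses the idea the paper uses. Your Step 1 fails as proposed. The one-variable Lambert function $g(t)=t^5\sum_{d\ge1}d^4e^{-dt}=t^5e^{-t}(1+11e^{-t}+11e^{-2t}+e^{-3t})/(1-e^{-t})^5$ is \emph{not} decreasing: it equals $24+\frac{t^6}{252}+O(t^7)$ near $t=0$, as the paper points out in Section~\ref{subsec:mono_lambert}. So the inequality $tP(e^t)-Q(e^t)>0$ you would need is false for small $t$, and no certificate $R>0$ can exist. This is the Lambert method breaking down at the critical exponent $w-1$. The bound $b\ge5$ is nevertheless true, but it needs a different argument. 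That argument is Corollary~\ref{cor:polymod_X61}: the tangent-line criterion together with $6(X_{6,1}')^2-5X_{6,1}''X_{6,1}=\Delta X_{4,2}$. It does not depend on the present lemma, and you should cite it instead.

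More seriously, your case analysis does not close. For $t\le1$ you ask for $a(s)<1$ for all $s\ge1$, which contradicts $a(1)=2$ from your own functional equation. At $s=1$ the two sides of your explicit bound are $2\pi E_4'(i)=2E_4(i)\approx2.91$ and $E_4(i)\approx1.46$. In fact $a(s)<1$ only for $s\gtrsim1.17$. So with $b\ge\max(5,2\pi t)$ the uncovered window is roughly $0.86\lesssim t\le4/\pi$, where $a+b$ only dips to about $8.4$.

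Your first fix bounds $a(1/t)$ after discarding the denominator $E_4(i/t)$. At $t=1$ this gives at best $a(1)\ge4-2E_4(i)\approx1.09$, and $1.09+b(1)\approx1.09+6.49<8$, so it fails. Your fallback describes an interval-arithmetic verification but does not carry it out, so the window is unproved. It could be completed with two-sided truncation bounds on $a$ that keep the denominator, plus a finite grid check, but that would be a numerical proof.

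The idea you are missing is that no factorization is needed. Since $X_{10,1}=(E_4^2)'/480=\sum_n n\sigma_7(n)q^n=\sum_m m\sum_d d^8q^{dm}$, one has $t^8X_{10,1}(it)=\sum_m m(2\pi m)^{-8}g_8(2\pi mt)$ with $g_8(t)=t^8\sum_d d^8e^{-dt}$. Because $8$ is below the critical exponent $9$, this function behaves like $8!/t$ at $0$ and is genuinely decreasing. The paper reduces this to \eqref{eqn:fpos} and proves it by showing that the Taylor coefficients $c_n$ of $f$ at $0$ are positive for $n\ge1$. This is automatic for $n\ge65$ and a finite check below, with no case split at all.
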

\begin{proof}
    We can write $X_{10, 1}$ as
    \begin{align*}
        X_{10, 1}(z) &= \sum_{n \ge 1} n \sigma_7(n) q^n = \sum_{m \ge 1} \sum_{d \ge 1} d^8 m q^{dm} \\
        &= \sum_{m \ge 1} \frac{m q^m ((q^{7m} + 1) + 247 (q^{6m} + q^{m}) + 4293 (q^{5m} + q^{2m}) + 15619 (q^{4m} + q^{3m}))}{(1 - q^m)^9}
    \end{align*}
    so it is enough to check that the following function is monotone in $t$:
    \[
    g_8(t) := t^8 \frac{e^{-t} ((e^{-7t} + 1) + 247 (e^{-6t} + e^{-t}) + 4293 (e^{-5t} + e^{-2t}) + 15619 (e^{-4t} + e^{-3t}))}{(1 - e^{-t})^9}.
    \]
    Its derivative is of the form
    \[
    -\frac{e^{-9t} t^7 (t P(e^t) - Q(e^t))}{(1 - e^{-t})^{10}}
    \]
    where $P(x)$ and $Q(x)$ are polynomials
    \begin{align*}
        P(x) &= (x^8 + 1) + 502 (x^7 + x) + 14608 (x^6 + x^2) + 88234 (x^5 + x^3) + 156190 x^4, \\
        Q(x) &= 8 (x^8 - 1 + 246 (x^7 - x) + 4046 (x^6 - x^2) + 11326 (x^5 - x^3)),
    \end{align*}
    and it is enough to show that
    \begin{equation}
        \label{eqn:fpos}
        f(t) := t P(e^t) - Q(e^t) > 0 \text{ for } t > 0.
    \end{equation}
    To prove this, we consider its Taylor expansion at $t = 0$:
    \[
    f(t) = \sum_{k=0}^{8} (a_k t + b_k) e^{kt} = \sum_{k = 0}^{8} \left(\sum_{n \ge 1} \frac{(na_k + kb_k)k^{n-1}}{n!} t^n + b_k\right)  = \sum_{n \ge 0} \frac{c_n}{n!} t^n
    \]
    where $P(x) = \sum_{0 \le k \le 8} a_k x^k$, $Q(x) = \sum_{0 \le k \le 8} b_k x^k$, $c_0 = f(0) = 8 > 0$ and
    \begin{align*}
    c_n &= \sum_{k=1}^{8} (na_k + kb_k) k^{n-1} \\
    &= (n - 64) 8^{n-1} + (502n - 13376) 7^{n-1} + (14608 n - 194208) 6^{n-1} \\
    &\quad + (88234 n - 453040) 5^{n-1} + 156190 n 4^{n-1} + (88234n + 271824) 3^{n-1}  \\
    &\quad + (14608 n + 64736) 2^{n-1} + (502 n + 1968).
    \end{align*}
    So it is enough to show that $c_n > 0$ for all $n \ge 1$.
    For $n \ge 65$, all the linear coefficients are positive, so $c_n > 0$.
    We can check $c_n > 0$ for $1 \le n \le 64$ directly.
\end{proof}

\begin{remark}
Figure \ref{fig:g8g9} shows the graphs of $g_8(t)$ and $g_9(t)$ on $2.5 \le t \le 20$, where $g_9(t)$ is defined as the function we obtain by replacing $t^8$ in $g_8(t)$ with $t^9$.
The function $g_9(t)$ is not monotone decreasing, which can be checked rigorously by comparing the limit of $g_9(t)$ as $t \to 0^+$ and the value $g_9(10)$.
In fact, the function $t \mapsto t^9 X_{10, 1}(it)$ is not monotone decreasing; see Remark \ref{remark:X81X101}.
\end{remark}

\begin{figure}[h]
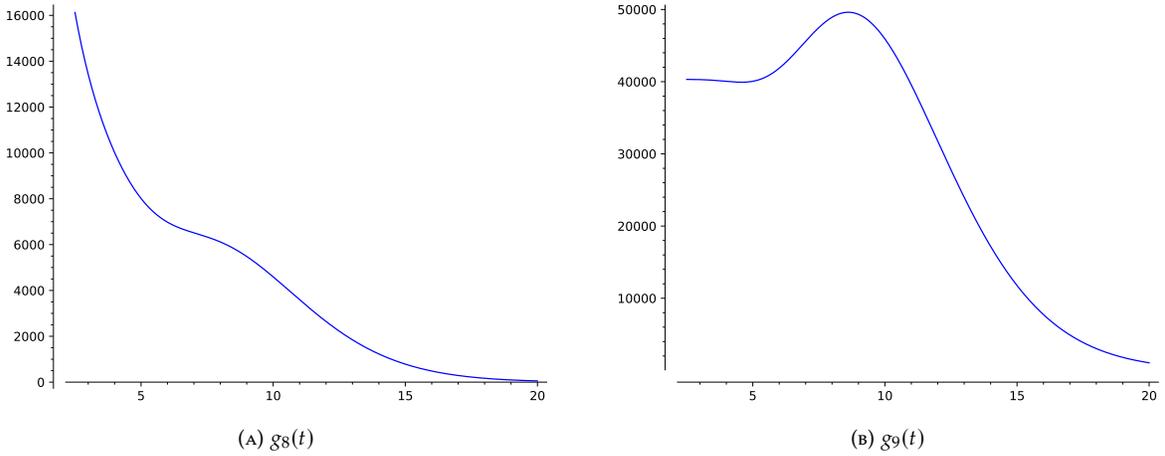
%
    \centering
    \subfloat[\centering $g_8(t)$]{{\includesvg[width=0.45\textwidth]{images/t8.svg}}}%
    \qquad
    \subfloat[\centering $g_9(t)$]{{\includesvg[width=0.45\textwidth]{images/t9.svg}}}%
    \caption{Graphs of $g_8(t)$ and $g_9(t)$ on $2.5 \le t \le 20$.}%
    \label{fig:g8g9}%
\end{figure}

Unfortunately, such an approach does not always work.
For example, the function
\[
t \mapsto t^4 (E_4(it) - 1)
\]
is monotone decreasing for $t > 0$ (see Corollary \ref{cor:polymod_E4}).
However, the following Lambert series expansion
\[
E_4(z) - 1 = 240 \sum_{n \ge 1} \sigma_3(n)q^n = 240 \sum_{m, d \ge 1} d^2 q^{dm} = 240 \sum_{m \ge 1} \frac{q^m(1 + 4q^m + q^{2m})}{(1 - q^m)^4}
\]
is not enough to prove monotonicity, since the function
\[
t \mapsto t^4 \frac{e^{-t}(1 + 4e^{-t} + e^{-2t})}{(1 - e^{-t})^4}
\]
is not monotone decreasing; it is $6 + \frac{t^4}{120} + O(t^5)$ near $t = 0$.
Similarly, the function
\[
t \mapsto t^5 X_{6, 1}(it)
\]
is monotone decreasing for $t > 0$ (Corollary \ref{cor:polymod_X61}).
However, the Lambert series expansion of $X_{6, 1}$ is 
\begin{align*}
    X_{6, 1}(z) = \sum_{m \ge 1} \sum_{d \ge 1} d^4 m q^{dm} = \sum_{m \ge 1} \frac{mq^m (1 + 11q^m + 11 q^{2m} + q^{3m})}{(1 - q^m)^5}
\end{align*}
but the function
\[
t \mapsto t^5 \frac{e^{-t} (1 + 11 e^{-t} + 11 e^{-2t} + e^{-3t})}{(1 - e^{-t})^5}
\]
is not monotone decreasing; it is $24 + \frac{t^6}{252} + O(t^7)$ near $t = 0$.

Another non-example is the weight $12$ and depth $1$ extremal quasimodular form
\[
X_{12, 1} = - \frac{E_{10}'}{277200} - \frac{\Delta}{1050} = \frac{1}{1050} \sum_{n \ge 1} (n \sigma_9(n) - \tau(n)) q^n
\]
which does not admit a nice Lambert series expansion, because of the presence of the term $\tau(n)$.
Monotonicity of relevant functions will be proved in the following section with different approach (Corollary \ref{cor:polymod_E4} and Corollary \ref{cor:polymod_X121}).

\subsection{Two sufficient conditions and more examples}
\label{subsec:mono_qmfineq}

We introduce two simple sufficient conditions (Proposition \ref{prop:polymod_tangent} and \ref{prop:polymod_der}) which are useful to prove monotonicity of $t^m F(it)$.

\begin{proposition}
    \label{prop:polymod_tangent}
    Let $F$ be a quasimodular form and let $m \in \bR_{> 0}$.
    Assume that
    \begin{enumerate}
        \item $F$ and $F'$ are positive quasimodular forms,
        \item
        \[
        \lim_{t \to 0^+} \frac{F(it)}{t F'(it)} = \frac{2\pi}{m},
        \]
        \item $(m+1) (F')^2 - m F'' F$ is positive.
    \end{enumerate}
    Then the function $t \mapsto t^m F(it)$ is monotone decreasing on $(0, \infty)$.
    Also, $m$ is the largest possible constant such that $t^m F(it)$ is monotone decreasing.
\end{proposition}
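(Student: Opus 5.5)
Throughout, write $f(t) := F(it)$. Since $F' = \frac{1}{2\pi i}\frac{\dd F}{\dd z}$, the chain rule gives $f'(t) = -2\pi F'(it)$, and similarly $\frac{\dd}{\dd t}F'(it) = -2\pi F''(it)$. Then
\[
\frac{\dd}{\dd t}\bigl(t^m f(t)\bigr) = t^{m-1}\bigl(m F(it) - 2\pi t F'(it)\bigr),
\]
so the monotonicity claim is equivalent to $m F(it) \le 2\pi t F'(it)$ for all $t>0$. On the set where $F'(it) > 0$, this is equivalent to
\[
\phi(t) := \frac{F(it)}{t\,F'(it)} \le \frac{2\pi}{m}.
\]
By hypothesis (2), $\phi(t) \to 2\pi/m$ as $t \to 0^+$. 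So the plan is to show that $\phi$ is nonincreasing on $(0,\infty)$. Then $\phi(t) \le \lim_{s\to0^+}\phi(s) = 2\pi/m$, which is exactly the derivative inequality, and hence $t^mF(it)$ is decreasing.

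To get the monotonicity of $\phi$, I would differentiate. Write $G := F'$. Then $\frac{\dd}{\dd t}G(it) = -2\pi F''(it)$, and
\[
\phi'(t) = \frac{-2\pi t F'(it)^2 - F(it)F'(it) + 2\pi t F(it) F''(it)}{t^2 F'(it)^2}.
\]
The sign of $\phi'$ is that of $-\bigl(2\pi t(F'^2 - FF'') + FF'\bigr)$. Hypothesis (3) is what controls this. It gives $mFF'' \le (m+1)F'^2$, and rearranging yields
\[
F'^2 - FF'' \ge F'^2 - \tfrac{m+1}{m}F'^2 = -\tfrac1m F'^2 .
\]
This bound alone does not fix the sign, so a direct comparison will not work. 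Instead I would change the auxiliary function: define
\[
\psi(t) := 2\pi t F'(it) - mF(it).
\]
This is the quantity whose nonnegativity we want. I would then show that $\psi$ has a sign-controlled derivative relative to some positive function, arranged so that hypothesis (3) appears exactly. Computing,
\[
\psi'(t) = 2\pi F' - 4\pi^2 tF'' + 2\pi mF' = 2\pi\bigl((m+1)F' - 2\pi tF''\bigr).
\]
This suggests working with the ratio $\psi/F'$, or equivalently with $\frac{\dd}{\dd t}\log$ of suitable quantities. Using $F' > 0$ from hypothesis (1), one checks
\[
\frac{\dd}{\dd t}\Bigl(\frac{\psi}{F'}\Bigr) = \frac{2\pi F'\bigl((m+1)F' - 2\pi tF''\bigr) + 2\pi F''\bigl(2\pi tF' - mF\bigr)}{F'^2} = \frac{2\pi\bigl((m+1)F'^2 - mFF''\bigr)}{F'^2} \ge 0
\]
by hypothesis (3). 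So $\psi/F' = 2\pi t - mF/F'$ is nondecreasing. Hypothesis (2) says $mF/(tF') \to 2\pi$, so $\psi/F' = t\,(2\pi - mF/(tF')) \to 0$ as $t\to0^+$. Hence $\psi/F' \ge 0$, so $\psi \ge 0$, and the derivative of $t^m F(it)$ is $t^{m-1}\cdot(-\psi) \le 0$. This gives the monotone decrease.

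For optimality of $m$, suppose $m' > m$ and $t^{m'}F(it)$ were decreasing. Then $m'F(it) \le 2\pi t F'(it)$ for all $t>0$, so $F(it)/(tF'(it)) \le 2\pi/m'$. Letting $t\to0^+$ and using hypothesis (2) gives $2\pi/m \le 2\pi/m' < 2\pi/m$, a contradiction.

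The main subtlety is the boundary step. One needs the limit of $t\cdot(2\pi - mF/(tF'))$ as $t \to 0^+$, which follows from (2) as shown, together with positivity of $F'$ (not merely nonnegativity) so that we may divide. If $F'$ vanishes at isolated points, I would handle it by continuity, since a nonzero holomorphic $F'$ has only isolated zeros on the imaginary axis. The main obstacle is therefore only in carefully justifying the strict positivity, or the passage through isolated zeros, in hypothesis (1).
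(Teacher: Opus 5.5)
Your proof is correct and is essentially the paper's proof. Your $\psi/F' = 2\pi t - mF(it)/F'(it)$ is exactly $m$ times the paper's $g(t) = l(t) - h(t)$, where $h = F(it)/F'(it)$ and $l(t) = \frac{2\pi}{m}t$. As in the paper, you show it is nondecreasing via hypothesis (3), that it vanishes as $t \to 0^+$ via hypothesis (2), and you read off optimality from the same limit.

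Your worry about zeros of $F'(it)$, which the paper passes over, can be closed. Let $t_0$ be a first zero; one exists, since (2) keeps zeros away from $0$. Your argument on $(0,t_0)$ gives $\psi(t_0) = -mF(it_0) \ge 0$, so $F(it_0)=0$. Then $F(it)$, being nonincreasing and nonnegative, vanishes for all $t \ge t_0$, which forces $F \equiv 0$. Hence $F'(it) > 0$ for all $t>0$.
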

\begin{proof}
    Consider the function
    \[
    h(t) := \frac{F(it)}{F'(it)}.
    \]
    The limit condition implies that $h(0) := \lim_{t \to 0^+} h(t) = 0$ and $h'(0) = \frac{2\pi}{m}$.
    In other words, the tangent line of $h(t)$ at $t = 0$ is $l(t) = \frac{2\pi}{m} t$.
    Direct computation shows that
    \[
    \frac{\dd}{\dd t}(t^m F(it)) = m t^{m-1} F(it) + t^m (-2\pi) F'(it) = m t^{m-1} F'(it) \left(h(t) - l(t)\right),
    \]
    so it is enough to show that $h(t) < l(t)$ for all $t > 0$.
    Consider their difference $g(t) = l(t) - h(t)$.
    The limit condition implies that $g(0) = 0$ and it is enough to show that $g(t)$ is monotone increasing, and this is equivalent to
    \[
    \frac{\dd}{\dd t} g(t) = \frac{2\pi}{m} + 2\pi \cdot \frac{(F'(it))^2 - F''(it) F(it)}{(F'(it))^2} > 0 \Leftrightarrow (m+1) (F')^2 - m F'' F > 0.
    \]
    The last claim follows from considering the behavior of $h(t)$ near $t = 0$.
\end{proof}

\begin{remark}
    The modular form $(m+1) (F')^2 - m F'' F$ is a constant multiple of the Rankin--Cohen bracket \cite{rankin1956construction,cohen1975sums} of $F$ with itself.
    More precisely, Martin and Royer \cite{martin2009rankin} generalized the notion of Rankin--Cohen brackets for quasimodular forms as
    \[
    \Phi_{n;k,s;\ell,t}(F, G) = \sum_{r=0}^{n} (-1)^r \binom{k-s+n-1}{n-r} \binom{\ell-t+n-1}{r} (D^r F) (D^{n-r} G)
    \]
    for $0 \le s \le k/2$ and $0 \le t \le \ell/2$.
    This matches with the usual Rankin--Cohen bracket $[F, G]_n^{(k, \ell)}$ when $s = t = 0$.
    One can directly check that
    \[
    \Phi_{2;m,0;m,0}(F, F) = [F, F]_{2}^{(m,m)} = \sum_{r=0}^{2} (-1)^r \binom{m+1}{2-r} \binom{m+1}{r} (D^r F)(D^{2-r}F) = (m+1)m F''F - (m+1)^2 (F')^2
    \]
    and the 3rd condition of Proposition \ref{prop:polymod_tangent} is equivalent to the negativity of $\Phi_{2;m,0;m,0}(F, F)$.
    In particular, when $F$ has weight $w$ and depth $s$, then for $m = w - s$
    \[
    (m+1) (F')^2 - m F'' F = - \frac{1}{m + 1} \Phi_{2;w,s;w,s}(F, F).
    \]
    is a quasimodular form of weight $2w + 4$ and depth at most $2s$.
\end{remark}

\begin{remark}
    For any completely positive quasimodular form $F$, we have $F'' F - (F')^2 > 0$.
    In fact, this holds for any completely monotone functions (e.g. see \cite{fink1982kolmogorov}).
\end{remark}

As corollaries, we can prove monotonicity of several functions including extremal quasimodular forms, such as the following example (which cannot be proved by the Lambert series approach in Section \ref{subsec:mono_lambert}).

\begin{proposition}
    \label{prop:Xw1limit}
    Let $X_{w, 1}$ be the normalized extremal quasimodular form of weight $w$ and depth $1$.
    Then
    \begin{align}
        \lim_{t \to 0^+} \frac{X_{w, 1}(it)}{t X_{w, 1}'(it)} &= \frac{2\pi}{w-1} \label{eqn:extremallimit} \\
        \lim_{t \to 0^+} t^{w-1} X_{w, 1}(it) &= -\frac{6(-1)^{\frac{w}{2}}\beta_{w-2}}{\pi} \label{eqn:extremallimit2}
    \end{align}
    for all $w \ge 6$.
\end{proposition}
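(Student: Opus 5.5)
We use the decomposition \eqref{eqn:Xw1AB}, $X_{w,1} = A_w + E_2 B_{w-2}$, together with the $S$-transformation $z \mapsto -1/z$. Since $A_w$ and $B_{w-2}$ are level one modular forms of weights $w$ and $w-2$, we have $A_w(i/t) = (it)^w A_w(it)$, i.e.\ $A_w(it) = i^{-w} t^{-w} A_w(i/t)$, and similarly $B_{w-2}(it) = i^{-(w-2)} t^{-(w-2)} B_{w-2}(i/t)$. Applying \eqref{eqn:e2_it} with $t$ replaced by $1/t$ gives $E_2(it) = -t^{-2}E_2(i/t) + \frac{6}{\pi t}$. Substituting these three identities, we get
\[
X_{w,1}(it) = (-1)^{w/2} t^{-w}\bigl(A_w(i/t) + E_2(i/t) B_{w-2}(i/t)\bigr) - (-1)^{w/2}\frac{6}{\pi}\, t^{-(w-1)} B_{w-2}(i/t).
\]
The first bracket is $X_{w,1}(i/t)$. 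As $t \to 0^+$ we have $q(i/t) = e^{-2\pi/t} \to 0$, and $X_{w,1} = O(q)$, so the first term is $O(t^{-w} e^{-2\pi/t})$, which is negligible. Since $B_{w-2}(i/t) \to \beta_{w-2,0}$, this gives
\[
t^{w-1}X_{w,1}(it) \to -\frac{6(-1)^{w/2}\beta_{w-2,0}}{\pi}.
\]
This is \eqref{eqn:extremallimit2}, with $\beta_{w-2}$ read as the constant term $\beta_{w-2,0}$. Lemma \ref{lem:n0coeff} guarantees $\beta_{w-2,0} \neq 0$ for $w \ge 6$, so this leading term really dominates. The lemma is stated for $6 \mid w$, but the recurrences there propagate nonvanishing to all even $w \ge 6$.

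For \eqref{eqn:extremallimit}, it is cleanest to differentiate the exact identity above in $t$ rather than transforming $X_{w,1}'$ separately. Note that $\frac{\dd}{\dd t} X_{w,1}(it) = -2\pi X_{w,1}'(it)$. Write
\[
\phi(t) := t^{w-1}X_{w,1}(it) = (-1)^{w/2}\Bigl(t^{-1}X_{w,1}(i/t) - \tfrac{6}{\pi} B_{w-2}(i/t)\Bigr).
\]
Then $-2\pi t^{w-1} X_{w,1}'(it) = \phi'(t) - (w-1)t^{w-2}X_{w,1}(it)$, which gives
\[
t\cdot t^{w-1}X_{w,1}'(it) = \frac{(w-1)\phi(t) - t\phi'(t)}{2\pi}.
\]
Now we need $t\phi'(t) \to 0$. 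Differentiating $\phi$, the term $B_{w-2}(i/t)$ contributes $t^{-1}\cdot O(e^{-2\pi/t})$, because $B_{w-2}$ has a $q$-expansion whose non-constant part is $O(q)$. The term $t^{-1}X_{w,1}(i/t)$ contributes $O(t^{-3}e^{-2\pi/t})$. Both decay exponentially, so $t\phi'(t) \to 0$. Hence $t^{w}X_{w,1}'(it) \to \frac{(w-1)L}{2\pi}$, where $L = \lim_{t\to 0^+}\phi(t) \neq 0$, and dividing gives
\[
\frac{X_{w,1}(it)}{tX_{w,1}'(it)} = \frac{\phi(t)}{t^{w}X_{w,1}'(it)} \to \frac{2\pi}{w-1}.
\]

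The main obstacle is justifying the derivative term, that is, showing that differentiating a function of $i/t$ produces only exponentially small terms. This is a routine estimate on termwise differentiation of $q$-series at $q = e^{-2\pi/t}$. The other point needing care is the nonvanishing of $\beta_{w-2,0}$ for all even $w \ge 6$, not only for $w \equiv 0 \pmod 6$. This follows from the chain of recurrences \eqref{eqn:alphaw2}, \eqref{eqn:alphaw4}, \eqref{eqn:alphaw6} together with $\beta_{w,0} = -\alpha_{w+2,0}$.
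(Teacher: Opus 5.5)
Your argument is correct. For \eqref{eqn:extremallimit2} it coincides with the paper's proof. Your identity for $X_{w,1}(it)$ is \eqref{eqn:Xw1trans} with $t$ replaced by $1/t$. The limit then follows from cuspidality of $X_{w,1}$ and $B_{w-2}(i/t)\to\beta_{w-2,0}\neq 0$; nonvanishing for all even $w\ge 6$ is exactly the ``in particular'' clause of Lemma \ref{lem:n0coeff}.

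For \eqref{eqn:extremallimit} you take a different and more economical route. The paper writes $X_{w,1}'=\widetilde{A}_{w+2}+E_2\widetilde{B}_w+E_2^2\widetilde{C}_{w-2}$ as a depth-two form and applies the $S$-transformation to get the separate formula \eqref{eqn:dXw1trans}. It then evaluates the quotient of the two transformed expressions as $t\to\infty$. You instead differentiate the single identity for $\phi(t)=t^{w-1}X_{w,1}(it)$ and use $t^wX_{w,1}'(it)=\bigl((w-1)\phi(t)-t\phi'(t)\bigr)/(2\pi)$. Expanding $t\phi'(t)$ explicitly reproduces \eqref{eqn:dXw1trans}, with $B_{w-2}'$ where the paper has the typo $B_w'$, so the two routes are equivalent.

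Your version avoids the depth-two bookkeeping and gives $\lim_{t\to0^+}t^wX_{w,1}'(it)=(w-1)L/(2\pi)$ directly. The paper's explicit formula \eqref{eqn:dXw1trans} has the advantage that it is reused in Proposition \ref{prop:Xw1monot0}.

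The step you flag as the main obstacle is not really one. By the chain rule, $\frac{\dd}{\dd t}G(i/t)=\frac{2\pi}{t^2}G'(i/t)$, and $G'$ has no constant term. So every term of $t\phi'(t)$ is $O(t^{-2}e^{-2\pi/t})$, with no termwise estimate on $q$-series needed.
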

\begin{proof}
    The transformation law gives
    \[
        X_{w, 1}\left(-\frac{1}{z}\right) = z^w A_w(z) + \left(z^2 E_2(z) - \frac{6iz}{\pi}\right) z^{w-2}B_{w-2}(z) = z^w X_{w, 1}(z) - \frac{6iz^{w-1}}{\pi} B_{w-2}(z)
    \]
    and $z = it$ gives
    \begin{equation}
        X_{w,1} \left(\frac{i}{t}\right) = (-1)^{\frac{w}{2}}\left[t^w X_{w, 1}(it) - \frac{6 t^{w-1}}{\pi} B_{w-2}(it)\right]. \label{eqn:Xw1trans}
    \end{equation}
    By differentiating $X_{w, 1}$ and rewriting in terms of Serre derivatives, we have
    \begin{align*}
        X_{w, 1}' &= A_{w}' + E_2' B_{w-2} + E_2 B_{w-2}' \\
        &= \partial_w A_w + \frac{w}{12} E_2 A_{w} + \frac{E_2^2 - E_4}{12} B_{w-2} + E_2 \left(\partial_{w-2} B_{w-2} + \frac{w-2}{12} E_2 B_{w-2}\right) \\
        &= \left(\partial_{w} A_{w} - \frac{1}{12} E_4 B_{w-2}\right) + E_2 \left(\frac{w}{12} A_w + \partial_{w-2} B_{w-2}\right) + E_2^2 \cdot \frac{w-1}{12} B_{w-2} \\
        &=: \widetilde{A}_{w+2} + E_2 \widetilde{B}_{w} + E_2^2 \widetilde{C}_{w-2},
    \end{align*}
    where
    \begin{align*}
        \widetilde{A}_{w+2} &:= \partial_{w} A_{w} - \frac{1}{12} E_4 B_{w-2} \in \cM_{w+2}(\SL_2(\bZ)), \\
        \widetilde{B}_{w} &:= \frac{w}{12} A_w + \partial_{w-2} B_{w-2} \in \cM_{w}(\SL_2(\bZ)), \\
        \widetilde{C}_{w-2} &:= \frac{w-1}{12} B_{w-2} \in \cM_{w-2}(\SL_2(\bZ)).
    \end{align*}
    Then applying the transformation law with $z = it$ gives
    \begin{align}
        X_{w, 1}'\left(\frac{i}{t}\right) &= (-1)^{\frac{w}{2} + 1} \left[t^{w+2} X_{w, 1}'(it) + \frac{t^{w+1}}{\pi} (-6 \widetilde{B}_{w}(it) - (w-1) B_{w-2}(it) E_{2}(it)) + \frac{t^w}{\pi^2} \cdot 3(w-1) B_{w-2}(it)\right] \nonumber\\
        &= (-1)^{\frac{w}{2} + 1} \left[t^{w+2} X_{w, 1}'(it) + \frac{t^{w+1}}{\pi} \left(-\frac{w}{2} X_{w, 1}(it) - 6 B_{w}'(it)\right) + \frac{t^w}{\pi^2} \cdot 3(w-1) B_{w-2}(it)\right]  \label{eqn:dXw1trans}
    \end{align}
    By combining \eqref{eqn:Xw1trans} and \eqref{eqn:dXw1trans}, we have
    \begin{align*}
        \lim_{t \to 0^+} \frac{X_{w, 1}(it)}{t X_{w, 1}'(it)} &= \lim_{t \to \infty} \frac{X_{w, 1}(i/t)}{\frac{1}{t} X_{w, 1}'(i/t)} \\
        &= \lim_{t \to \infty} -\frac{t^w X_{w, 1}(it) - \frac{6 t^{w-1}}{\pi} B_{w-2}(it)}{t^{w+1} X_{w, 1}'(it) + \frac{t^w}{\pi} \left(-\frac{w}{2} X_{w, 1}(it) - 6 B_{w-2}'(it)\right) + \frac{t^{w-1}}{\pi^2} \cdot 3(w-1) B_{w-2}(it)} \\
        &= -\lim_{t \to \infty} \frac{t X_{w, 1}(it) - \frac{6}{\pi} B_{w-2}(it)}{t^2 X_{w, 1}'(it) + \frac{t}{\pi} \left(-\frac{w}{2} X_{w, 1}(it) - 6 B_{w-2}'(it)\right) + \frac{1}{\pi^2} \cdot 3(w-1) B_{w-2}(it)}.
    \end{align*}
    Since $X_{w, 1}, X_{w, 1}'$ and $-\frac{w}{2} X_{w, 1} - 6 B_{w-2}'$ are cusp forms, the limits of the terms involving them vanish as $t \to \infty$.
    By Lemma \ref{lem:n0coeff}, $B_{w-2}$ is not a cusp form and the final limit equals to
    \[
    -\frac{-\frac{6}{\pi} \cdot \beta_{w-2}}{\frac{1}{\pi^2} \cdot 3(w-1) \cdot \beta_{w-2}} = \frac{2\pi}{w-1}
    \]
    which proves \eqref{eqn:extremallimit}.
    The limit \eqref{eqn:extremallimit2} follows from \eqref{eqn:Xw1trans} and the cuspidality of $X_{w, 1}$.
\end{proof}

\begin{corollary}
    \label{cor:polymod_X61}
    $t \mapsto t^5 X_{6, 1}(it)$ is monotone decreasing for $t > 0$.
\end{corollary}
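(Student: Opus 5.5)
We will deduce the corollary from Proposition \ref{prop:polymod_tangent} with $F = X_{6,1}$ and $m = 5$, verifying its three hypotheses in turn.

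For (1), recall $X_{6,1} = \frac{E_2E_4 - E_6}{720} = \frac{E_4'}{240} = \sum_{n\ge 1} n\sigma_3(n) q^n$, whose coefficients are all nonnegative. Hence $X_{6,1}$ is completely positive, and so is $X_{6,1}'$. In particular both are positive on the imaginary axis, since $q = e^{-2\pi t} > 0$ there.

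For (2), this is exactly \eqref{eqn:extremallimit} of Proposition \ref{prop:Xw1limit} with $w = 6$, which gives the limit $\frac{2\pi}{5} = \frac{2\pi}{m}$.

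The real work is (3): we must show that
\[
G := 6 (X_{6,1}')^2 - 5 X_{6,1}'' X_{6,1}
\]
is positive. We compute $G$ explicitly via Ramanujan's formulas \eqref{eqn:ramanujan}. By the preceding remark, for $m = w - s = 5$ the form $G$ is $-\frac{1}{6}\Phi_{2;6,1;6,1}(X_{6,1},X_{6,1})$. Its weight is $16$, and we expect the $E_2$-dependence to cancel so that $G$ is a genuine modular form of weight $16$, and in fact a cusp form with leading term of order $q^2$. Indeed, the $q^2$ coefficient is $6\cdot 1 - 5\cdot 1 = 1 > 0$, because $X_{6,1} = q + O(q^2)$. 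If $G$ is modular, then $G \in \cM_{16} = \langle E_4^4, E_4\Delta\rangle$, and vanishing to order $2$ would force $G = c\,\Delta^2/(\text{weight }-8)$, which is impossible. So more likely $G = c E_4 \Delta$ plus a correction, and we simply compute the expression.

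If $G$ turns out to be a multiple $cE_4\Delta$ with $c > 0$, positivity follows at once from $\Delta(it) > 0$ and $E_4(it) > 0$. If instead $G$ retains $E_2$ terms of depth at most $2$, we fall back on Proposition \ref{prop:derpos}. Either we show the $q$-coefficients are nonnegative, or we exhibit a Serre derivative $\partial_k G$ that is manifestly positive (e.g. a positive combination of products of $\Delta$, $E_4$, and completely positive forms) and then apply item (4), using that the leading coefficient of $G$ is positive.

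We expect this verification of (3) to be the main obstacle. It is a finite symbolic computation, but the sign argument depends on the shape $G$ takes. Once (1)–(3) hold, Proposition \ref{prop:polymod_tangent} gives that $t^5 X_{6,1}(it)$ is monotone decreasing, with $5$ optimal.
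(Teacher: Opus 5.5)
\textbf{Gap: hypothesis (3) is not proved.} Your reduction is the same as the paper's: apply Proposition \ref{prop:polymod_tangent} with $F = X_{6,1}$ and $m=5$, and handle (1) and (2) exactly as you do. Hypothesis (3) is the only nontrivial step, and you leave it unproved.

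Your expectations about $G := 6(X_{6,1}')^2 - 5X_{6,1}''X_{6,1}$ are also incorrect.
\begin{itemize}
\item You observe that $G = q^2 + O(q^3)$. No nonzero element of $\cM_{16} = \langle E_4^4, E_4\Delta\rangle$ vanishes to order $2$ at $\infty$, so this already shows $G$ is \emph{not} modular. By the preceding remark $G$ has depth at most $2$, and in fact it has depth exactly $2$.
\item The shape $cE_4\Delta$ cannot be right either, since $E_4\Delta$ begins at $q$.
\item Your first fallback, nonnegativity of the Fourier coefficients, fails. From $X_{6,1} = q + 18q^2 + 84q^3 + 292q^4 + 630q^5 + \cdots$ one gets
\[
G = q^2 - 18q^3 + 120q^4 - 220q^5 + \cdots,
\]
so $G$ is positive but not completely positive.
\item Your second fallback, that some $\partial_k G$ is manifestly positive, comes with no candidate $k$ or expression.
\end{itemize}
As written, the argument does not establish (3).

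\textbf{Missing idea.} The paper identifies $G$ explicitly by direct computation:
\[
6(X_{6,1}')^2 - 5X_{6,1}''X_{6,1} = \Delta\, X_{4,2}, \qquad X_{4,2} = -\frac{E_2'}{24} = \frac{E_4 - E_2^2}{288} = \sum_{n \ge 1} n\sigma_1(n) q^n.
\]
Positivity is then immediate. Indeed, $\Delta(it) = e^{-2\pi t}\prod_{n\ge1}(1 - e^{-2\pi n t})^{24} > 0$, and $X_{4,2}$ is completely positive.

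Once you have this factorization, your Serre-derivative route would also work, though it is not needed. Since $\partial_{12}\Delta = 0$, we get $\partial_{16}(\Delta X_{4,2}) = \Delta\,\partial_4 X_{4,2}$. And $\partial_4 X_{4,2}$ is completely positive by Proposition \ref{prop:derpos}(5).
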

\begin{proof}
    We know that $X_{6, 1}$ is completely positive.
    The limit condition follows from Proposition \ref{prop:Xw1limit} with $w = 6$.
    Direct computation shows that
    \[
    6 (X_{6, 1}')^2 - 5 X_{6, 1}'' X_{6, 1} = \Delta X_{4, 2} > 0.
    \]
\end{proof}

\begin{corollary}
    \label{cor:polymod_E4}    
    $t \mapsto t^4 (E_4(it) - 1)$ is monotone decreasing for $t > 0$.
\end{corollary}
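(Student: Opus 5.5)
The plan is to apply Proposition \ref{prop:polymod_tangent} with $F = E_4 - 1$ and $m = 4$, checking its three hypotheses in turn.

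\emph{Positivity.} We have $E_4 - 1 = 240\sum_{n\ge1}\sigma_3(n)q^n$ and $F' = E_4' = 240\sum_{n \ge 1} n\sigma_3(n) q^n$. Both are completely positive, hence positive; this gives condition (1).

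\emph{Limit.} Take $z = is$ with $s = 1/t \to \infty$. Modularity of $E_4$ gives $E_4(i/s) = s^4 E_4(is)$, so $F(i/s) = s^4 E_4(is) - 1 \sim s^4$. Differentiating $E_4(-1/z) = z^4 E_4(z)$ gives
\[
E_4'(-1/z) = z^6 E_4'(z) + \frac{4}{2\pi i} z^5 E_4(z),
\]
which equivalently follows from \eqref{eqn:e2_transform} and $E_4' = (E_2E_4 - E_6)/3$. At $z = is$ this reads
\[
E_4'(i/s) = -s^6 E_4'(is) + \frac{2}{\pi} s^5 E_4(is) \sim \frac{2}{\pi} s^5,
\]
since $E_4'$ is a cusp form and $E_4(is) \to 1$. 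Hence
\[
\frac{F(it)}{tF'(it)} \to \frac{s^4}{s^{-1}\cdot \frac{2}{\pi}s^5} = \frac{\pi}{2} = \frac{2\pi}{4},
\]
which is condition (2) with $m = 4$.

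\emph{Rankin--Cohen condition.} We need $5(E_4')^2 - 4E_4''(E_4 - 1) > 0$ on the imaginary axis. Split it as
\[
\bigl(5(E_4')^2 - 4E_4''E_4\bigr) + 4E_4''.
\]
The first bracket is the Rankin--Cohen bracket of $E_4$ with itself (depth $0$, $m = w$), so it is a modular form of weight $12$. It has no constant term, so it is a multiple of $\Delta$. Comparing the $q$-coefficients identifies it: $(E_4')^2 = O(q^2)$ and $E_4''E_4 = 240q + O(q^2)$, so the bracket equals $-960\Delta$. Condition (3) therefore reduces to
\[
E_4''(it) > 240\,\Delta(it).
\]

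This last inequality is the only step needing an estimate, and it is straightforward pointwise (a coefficientwise comparison would not work). Write $q = e^{-2\pi t} \in (0,1)$. Then $\Delta(it) = q\prod_{n \ge 1}(1-q^n)^{24} < q$. On the other hand $E_4''(it) = 240\sum_{n\ge1} n^2\sigma_3(n)q^n > 240q$, since every term is positive. Together these give $E_4''(it) > 240\,\Delta(it)$.

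With all three conditions verified, Proposition \ref{prop:polymod_tangent} shows that $t\mapsto t^4(E_4(it) - 1)$ is monotone decreasing for $t > 0$, with $4$ the optimal exponent. The step needing the most care is the identification $5(E_4')^2 - 4E_4''E_4 = -960\Delta$. I would confirm it by the dimension argument together with the $q^1$ coefficient (optionally checking the $q^2$ coefficient as well).
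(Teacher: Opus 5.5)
Your proof is correct, but it takes a different route from the paper. The paper does not apply Proposition~\ref{prop:polymod_tangent} to $E_4 - 1$ at all. It rewrites the derivative condition as $E_4(it) - 1 - \frac{\pi t}{6}(E_2E_4 - E_6)(it) < 0$ and substitutes $t \mapsto 1/t$ using \eqref{eqn:e2_it}. This turns the condition into $120\pi t^5 X_{6,1}(it) < 1$, which follows from the monotonicity of $t^5 X_{6,1}(it)$ (Corollary~\ref{cor:polymod_X61}, proved via the tangent criterion with bracket $\Delta X_{4,2}$) and the limit $\lim_{t\to 0^+} t^5X_{6,1}(it) = \frac{1}{120\pi}$ from Proposition~\ref{prop:Xw1limit}. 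What that route buys is an $S$-duality: the statement for $E_4-1$ becomes a weaker consequence of the one for the extremal form $X_{6,1}$, tying it to the paper's framework.

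Your argument is more self-contained. It bypasses $X_{6,1}$ and $X_{4,2}$ entirely and needs only:
\begin{itemize}
\item the $S$-transformation of $E_4$ and $E_4'$ for the limit $\frac{\pi}{2} = \frac{2\pi}{4}$, which is computed correctly;
\item the identity $5(E_4')^2 - 4E_4''E_4 = -960\Delta$, which checks out (the $q^2$ coefficients agree at $23040$);
\item the elementary pointwise bound $E_4''(it) > 240q > 240\Delta(it)$.
\end{itemize}
It also gives optimality of the exponent $4$ for free from the last clause of the proposition. Your aside that a coefficientwise comparison fails is right: $n^2\sigma_3(n) - \tau(n) = 3150 - 4830 < 0$ at $n=5$.

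The one point you should state explicitly is that $E_4 - 1$ is not a quasimodular form of a single weight. So Proposition~\ref{prop:polymod_tangent}, as stated, does not literally cover it. Its proof, however, uses only one-variable calculus on $h(t) = F(it)/F'(it)$ together with $\frac{\dd}{\dd t}F(it) = -2\pi F'(it)$. It therefore applies verbatim to $F = E_4 - 1$, and you should say so.
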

\begin{proof}
    The monotonicity is equivalent to
    \[
    \frac{\dd}{\dd t} \left[t^4 (E_4(it) - 1)\right] = 4 t^3 (E_4(it) - 1) - 2\pi t^4 E_4'(it) < 0 \Leftrightarrow E_4(it) - 1 - \frac{\pi t}{6}(E_2(it) E_4(it) - E_6(it)) < 0.
    \]
    Substituting $t$ with $1/t$ and using the transformation laws, this is equivalent to
    \begin{align*}
        E_4\left(\frac{i}{t}\right) - 1 - \frac{\pi}{6t}\left[E_2\left(\frac{i}{t}\right) E_4\left(\frac{i}{t}\right) - E_6\left(\frac{i}{t}\right)\right] &= t^4 E_4(it) - 1 - \frac{\pi}{6t}\left[\left(-t^2 E_2(it) + \frac{6t}{\pi}\right) \cdot t^4 E_4(it) + t^6 E_6(it)\right] \\
        &= -1 + \frac{\pi t^5}{6}(E_2(it) E_4(it) - E_6(it)) \\
        &= -1 + 120 \pi t^5 X_{6, 1}(it) < 0.
    \end{align*}
    The last inequality follows from Corollary \ref{cor:polymod_X61} and $\lim_{t \to 0^+} t^5 X_{6, 1}(it) = \frac{1}{120 \pi}$ by Proposition \ref{prop:Xw1limit}.
\end{proof}

\begin{corollary}
    \label{cor:polymod_X121}
    $t \mapsto t^{11}X_{12, 1}(it)$ is monotone decreasing for $t > 0$.
\end{corollary}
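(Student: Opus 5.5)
We apply Proposition \ref{prop:polymod_tangent} with $F = X_{12,1}$ and $m = 11$, mirroring the proof of Corollary \ref{cor:polymod_X61}. Three hypotheses need to be checked, in the following order.

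\emph{Limit condition.} This is immediate from Proposition \ref{prop:Xw1limit} with $w = 12$, which gives $\lim_{t\to 0^+} X_{12,1}(it)/(tX_{12,1}'(it)) = 2\pi/11$.

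\emph{Positivity of $F$ and $F'$.} Here we use the recurrence \eqref{eqn:extqmf_rec1} of Theorem \ref{thm:Xw1rec} with $w=12$:
\[
X_{12,1}' = \tfrac{5}{6} X_{6,1} X_{8,1} + \tfrac{7}{6} X_{8,1} X_{6,1} = 2 X_{6,1}X_{8,1}.
\]
Since $X_{6,1}=\sum n\sigma_3(n)q^n$ and $X_{8,1}=\sum n\sigma_5(n)q^n$ are completely positive, $X_{12,1}'$ is completely positive. Proposition \ref{prop:derpos}(1) then shows that the cusp form $X_{12,1}$ is completely positive as well. In particular both $F$ and $F'$ are positive.

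\emph{The Rankin--Cohen type expression.} It remains to show that
\[
G := 12 (X_{12,1}')^2 - 11 X_{12,1}'' X_{12,1}
\]
is positive. By the remark after Proposition \ref{prop:polymod_tangent}, with $m = w-s = 11$, $G$ is a quasimodular form of weight $28$ and depth at most $2$, vanishing to order at least $2$ at $\infty$. I would expand $G$ in $E_2,E_4,E_6$ by computer (Sage) and look for a factorization in terms of manifestly positive forms, as happened for weight $6$ where the analogous expression was $\Delta X_{4,2}$.

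The natural first guess is $G = c\,\Delta\, H$ with $c>0$ and $H$ a quasimodular form of weight $16$ and depth $\le 2$. For $H$, I would try to identify it as a positive combination of products such as $X_{6,1}X_{10,1}$, $X_{8,1}^2$, $E_4 X_{4,2}X_{8,1}$ or $X_{4,2}X_{12,1}$. Alternatively, one can use $F' = 2X_{6,1}X_{8,1}$ and $F'' = 2(X_{6,1}'X_{8,1}+X_{6,1}X_{8,1}')$, then apply \eqref{eqn:extqmf_rec1}--\eqref{eqn:extqmf_rec3} and Grabner's relations to rewrite $G$ in the extremal basis.

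If no completely positive decomposition appears, the fallback is Proposition \ref{prop:derpos}(4). Since $G$ is a cusp form with positive leading coefficient, I would find $k$ with $\partial_k G$ (or $\partial_k$ of $G/\Delta$) manifestly positive, possibly iterating. Failing that, positivity on the imaginary axis can be checked by splitting at $t=1$ and using the modular transformation to reduce to $t\ge1$ with truncated $q$-series bounds.

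This last step, establishing positivity of $G$, is the main obstacle. The first two hypotheses follow directly from earlier results.

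Once all three hypotheses hold, Proposition \ref{prop:polymod_tangent} gives that $t^{11}X_{12,1}(it)$ is monotone decreasing, and moreover that $11$ is optimal.
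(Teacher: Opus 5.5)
Your verification of the limit condition and of the positivity of $X_{12,1}$ and $X_{12,1}'$ (via $X_{12,1}'=2X_{6,1}X_{8,1}$ and Proposition~\ref{prop:derpos}(1)) is correct. However, the third hypothesis of Proposition~\ref{prop:polymod_tangent} is the whole difficulty, and your proposal leaves it unproved. The missing idea is the explicit identity
\[
12(X_{12,1}')^2-11X_{12,1}''X_{12,1}=\frac{1}{2^{10}\cdot 3^6\cdot 5^2\cdot 7^2}\,\Delta F .
\]
Here $F$ is precisely the weight $16$, depth $2$ form \eqref{eqn:F} from the Leech lattice packing problem, which is already known to be positive by \cite[Corollary~6.2]{lee2024algebraic}. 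Your ansatz $G=c\,\Delta H$ has the right shape. But until you identify $H$ with this form, or otherwise prove $H$ positive, the argument is incomplete. The Serre-derivative and numerical fallbacks are only named, not carried out.

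Moreover, the specific strategy you suggest for $H$ cannot work with the products you list. Since $X_{12,1}=q^2+\cdots$, one gets $G=4q^4+\cdots$, so $H$ vanishes to order exactly $3$ and has depth $2$. But $X_{6,1}X_{10,1}$, $X_{8,1}^2$ and $E_4X_{4,2}X_{8,1}$ all begin with $q^2$ with coefficient $1$. So a nonnegative combination vanishing to order $3$ could only use $X_{4,2}X_{12,1}$, which has depth $3$. In fact $F=5040^2\,(X_{6,1}X_{10,1}-X_{8,1}^2)$ is a \emph{difference}.

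You can still finish along your own lines without citing \cite{lee2024algebraic}. The $n$-th coefficient of $X_{6,1}X_{10,1}-X_{8,1}^2$ equals $\frac12\sum_{a+b=n}ab\,[\sigma_3(a)\sigma_7(b)+\sigma_7(a)\sigma_3(b)-2\sigma_5(a)\sigma_5(b)]$. Each bracket is nonnegative by AM--GM together with the Cauchy--Schwarz bound $\sigma_5(a)^2\le\sigma_3(a)\sigma_7(a)$. Hence $F$ is completely positive and nonzero, so $G=c\,\Delta F>0$ on the imaginary axis.
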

\begin{proof}
    We know that $X_{12, 1}$ is completely positive.
    The limit condition follows from Proposition \ref{prop:Xw1limit} with $w = 12$.
    Direct computation shows that
    \[
        12 (X_{12, 1}')^2 - 11 X_{12, 1}'' X_{12, 1} = \frac{1}{2^{10} \cdot 3^{6} \cdot 5^{2} \cdot 7^{2}}\cdot \Delta F,
    \]
    where
    \[
    F = 49 E_2^2 E_4^3 - 25 E_2^2 E_6^2 - 48 E_2 E_4^2 E_6 - 25 E_4^4 + 49 E_4 E_6^2
    \]
    is the quasimodular form of weight $16$ and depth $2$ that appears in the Leech lattice packing \eqref{eqn:F}, which is positive \cite[Corollary 6.2]{lee2024algebraic}.
\end{proof}

\begin{corollary}
    \label{cor:polymod_X141}
    $t \mapsto t^{13}X_{14, 1}(it)$ is monotone decreasing for $t > 0$.
\end{corollary}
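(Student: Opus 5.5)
We apply Proposition~\ref{prop:polymod_tangent} with $F = X_{14,1}$ and $m = 13$, in the same way as Corollaries~\ref{cor:polymod_X61} and \ref{cor:polymod_X121}. There are three conditions to check.

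The first condition is that $X_{14,1}$ and $X_{14,1}'$ are positive. We will deduce this from complete positivity of $X_{14,1}$, which gives positivity of all its derivatives by Proposition~\ref{prop:derpos}(3). Complete positivity of $X_{14,1}$ is the Kaneko--Koike conjecture for depth $1$, proved via Theorem~\ref{thm:Xw1rec}. Concretely, \eqref{eqn:extqmf_rec2} with $w=12$ expresses $X_{14,1}'$ as a nonnegative combination of $X_{6,1}X_{10,1}$ and $X_{8,1}X_{8,1}$. These products are completely positive because $X_{6,1}, X_{8,1}, X_{10,1}$ have Fourier coefficients $n\sigma_{w-3}(n)\ge 0$. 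Proposition~\ref{prop:derpos}(1) then gives complete positivity of the cusp form $X_{14,1}$.

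The second condition, the limit, is Proposition~\ref{prop:Xw1limit} with $w=14$, which gives exactly $2\pi/13$.

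The third condition is the main step: we must show that
\[
G := 14 (X_{14,1}')^2 - 13 X_{14,1}'' X_{14,1}
\]
is positive. By the remark following Proposition~\ref{prop:polymod_tangent}, $G$ is a quasimodular form of weight $32$ and depth at most $2$. Since $X_{14,1}=O(q)$, it vanishes to order at least $2$ at infinity.

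1. Using the explicit form $X_{14,1} = (7E_2E_4^3+5E_2E_6^2-12E_4^2E_6)/4717440$ and Ramanujan's formulas \eqref{eqn:ramanujan}, compute $G$ as a polynomial in $E_2,E_4,E_6$.
2. By analogy with the weight $12$ case, expect a factorization $G = c\,\Delta \cdot H$ with $c>0$ and $H$ of weight $20$ and depth $\le 2$. Quite possibly $H = E_4 \cdot F$ up to a correction, or $H$ is built from the form $F$ of \eqref{eqn:F}, since $X_{16,1}=E_4X_{12,1}$ suggests the structure propagates. Because $\Delta(it)>0$, positivity of $G$ reduces to positivity of $H$.
3. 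Prove $H$ is positive. If $H$ is (a positive multiple of) $E_4F$, this follows from positivity of $F$ \cite[Corollary 6.2]{lee2024algebraic} and $E_4(it)>0$. Otherwise, first try to show $H$ is completely positive by checking its leading coefficient is positive and applying Proposition~\ref{prop:derpos}(5) to a suitable Serre derivative, or by the criterion in Proposition~\ref{prop:derpos}(4): show $\partial_k H$ is positive for an appropriate $k$ (say $k=20$, so that it lowers depth-related terms), where $\partial_k H$ is expressed through products of known positive forms such as $\Delta$, $X_{w,1}$, and $F$.

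The hardest step is establishing positivity of $H$ when it is not directly a product of known positive forms. If the Serre derivative route fails, the fallback is the method of \cite{lee2024algebraic}: write $H$ via the transformation \eqref{eqn:e2_it} on $t\ge 1$ and $t\le 1$ separately, and reduce to comparing finitely many $q$-expansion terms with explicit tail bounds.
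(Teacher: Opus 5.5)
You set things up as the paper does (Proposition \ref{prop:polymod_tangent} with $m=13$), and your checks of conditions 1 and 2 are fine. The gap is condition 3, which is the entire content of the paper's proof. You never establish that $G$ is positive; you only describe a computation and guess its outcome. The missing fact is the identity
\[
14 (X_{14,1}')^2 - 13 X_{14,1}'' X_{14,1} = 4\Delta^2 X_{8,2}.
\]
Positivity is then immediate, since $\Delta(it)>0$ and $X_{8,2}$ is completely positive \cite[Proposition 4.6]{lee2024algebraic}. As a sanity check, \eqref{eqn:extqmf_rec2} at $w=12$ gives $X_{14,1}=q^2+128q^3+4050q^4+\cdots$. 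Both sides of the identity then equal $4q^4-128q^5+1656q^6+\cdots$.

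Your concrete guesses for $H=G/\Delta$ would not get you there. Here $H=4\Delta X_{8,2}$, which is not a multiple of $E_4F$. Reducing modulo $E_4$ in $\bC[E_2,E_4,E_6]$ gives $\Delta\equiv -E_6^2/1728$ and $X_{8,2}\equiv E_2E_6/181440$, so $E_4\nmid \Delta X_{8,2}$.

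Nor is $H$ completely positive, since $H=4q^3-32q^4+\cdots$. So your first fallback fails.

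The Serre-derivative criterion can be made to work: $\partial_{20}(\Delta X_{8,2})=\Delta\,\partial_8X_{8,2}$, which is positive by Proposition \ref{prop:derpos}(5). But this requires already knowing the factor $\Delta X_{8,2}$, at which point the criterion is unnecessary.

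The numerical tail-bound fallback is not carried out. As written, the proposal leaves the key inequality unproved.
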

\begin{proof}
    Direct computation shows that
    \[
    14 (X_{14, 1}')^2 - 13 X_{14, 1}'' X_{14, 1} = 4 \Delta^2 X_{8, 2},
    \]
    where $X_{8, 2}$ is the normalized extremal quasimodular form of weight $8$ and depth $2$, which is completely positive by \cite[Proposition 4.6]{lee2024algebraic}.
\end{proof}

\begin{remark}
\label{remark:X81X101}
Note that $t \mapsto t^{7} X_{8, 1}(it)$ and $t \mapsto t^{9} X_{10, 1}(it)$ are not monotone decreasing (See Figure \ref{fig:X81} and \ref{fig:X101}).
The former function has a local maximum at $t = 1$; we have
\[
\frac{\dd}{\dd t} (t^{7} X_{8, 1}(it)) = t^6 (7 X_{8, 1}(it) - 2\pi t X_{8, 1}'(it))
\]
and from $E_2(i) = \frac{3}{\pi}$ and $E_6(i) = 0$,
\begin{align*}
    7 X_{8, 1}(i) - 2\pi X_{8, 1}'(i) &= 7 \cdot \frac{E_4(i)^2 - E_2(i) E_6(i)}{1008} - 2\pi \left(\frac{-E_2(i)^2 E_6(i) + 2 E_2(i) E_4(i)^2 - E_4(i) E_6(i)}{1728}\right) \\
    &= E_4(i)^2 \left(\frac{7}{1008} - \frac{2\pi \cdot 2 \cdot \frac{3}{\pi}}{1728}\right) = 0.
\end{align*}
For the latter function, note that
\[
X_{10, 1} = \frac{E_4(E_2 E_4 - E_6)}{720}, \quad X_{10, 1}' = \frac{9 E_2^2 E_4^2 - 18 E_2 E_4 E_6 + 5 E_4^3 + 4 E_6^2}{8640}.
\]
By Corollary \ref{prop:Xw1limit}, the limit as $t \to 0^+$ is $-\frac{6(-1)^{\frac{10}{2}} \beta_8}{\pi} = \frac{1}{120\pi}$, while the value at $t = 1$ is
\[
X_{10, 1}(i) = \frac{E_2(i) E_4(i)^2}{720} = \frac{1}{720} \cdot \frac{3}{\pi} \cdot \frac{9 \Gamma(1/4)^{16}}{4096 \pi^{12}} > \frac{1}{120\pi}.
\]
\end{remark}

\begin{figure}[t]
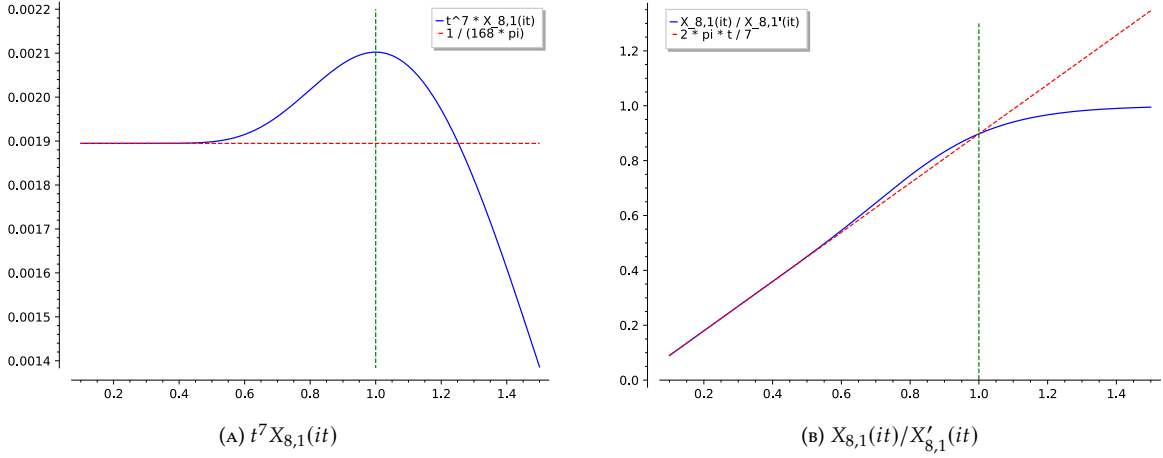
%
    \centering
    \subfloat[\centering $t^{7}X_{8, 1}(it)$]{{\includesvg[width=0.45\textwidth]{images/X81_1.svg}}}%
    \qquad
    \subfloat[\centering $X_{8, 1}(it) / X_{8, 1}'(it)$]{{\includesvg[width=0.45\textwidth]{images/X81_2.svg}}}%
    \caption{Graphs of $t \mapsto t^{7} X_{8, 1}(it)$ and $t \mapsto X_{8, 1}(it) / X_{8, 1}'(it)$ with asymptotes as $t \to 0^+$.}%
    \label{fig:X81}%
\end{figure}

\begin{figure}[t]
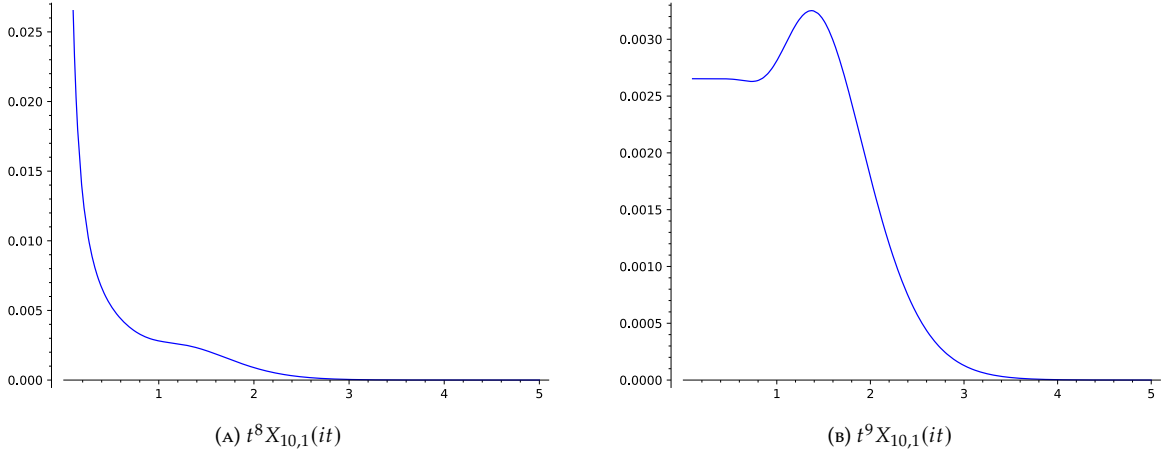
%
    \centering
    \subfloat[\centering $t^8 X_{10, 1}(it)$]{{\includesvg[width=0.45\textwidth]{images/t8X101.svg}}}%
    \qquad
    \subfloat[\centering $t^9 X_{10, 1}(it)$]{{\includesvg[width=0.45\textwidth]{images/t9X101.svg}}}%
    \caption{Graphs of $t^8 X_{10, 1}(it)$ and $t^9 X_{10, 1}(it)$ on $0.1 \le t \le 5$.}%
    \label{fig:X101}%
\end{figure}

The second proposition shows that monotonicity of $t^{m+1} F'(it)$ implies that of $t^m F(it)$.

\begin{proposition}
    \label{prop:polymod_der}
    Let $F$ be a quasimodular cuspform.
    If $t \mapsto t^{m+1} F'(it)$ is monotone decreasing, then $t \mapsto t^m F(it)$ is also monotone decreasing.
\end{proposition}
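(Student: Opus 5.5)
Since $F$ is a cusp form, $F(it)\to 0$ and $F'(it)\to 0$ rapidly as $t\to\infty$. This lets us write $F(it)$ as an integral of $F'$ along the imaginary axis. From $\frac{\dd}{\dd t}F(it) = -2\pi F'(it)$ we get
\[
F(it) = 2\pi \int_t^\infty F'(is)\,\dd s .
\]
The plan is to show that the quantity $mF(it) - 2\pi t F'(it)$ is negative, since
\[
\frac{\dd}{\dd t}\bigl(t^m F(it)\bigr) = t^{m-1}\bigl(mF(it) - 2\pi t F'(it)\bigr).
\]

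The hypothesis gives $G(s) := s^{m+1}F'(is)$ nonincreasing in $s$. So for $s \ge t$ we have $G(s)\le G(t)$, that is,
\[
F'(is) \le t^{m+1} F'(it)\, s^{-(m+1)} .
\]
Integrating over $s\in(t,\infty)$ gives
\[
F(it) \le 2\pi t^{m+1}F'(it)\int_t^\infty s^{-m-1}\,\dd s = \frac{2\pi}{m}\, t F'(it),
\]
which is exactly $mF(it) - 2\pi t F'(it)\le 0$. Hence $t^m F(it)$ is nonincreasing.

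Several points need care.

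\textbf{Real values.} We need $F(it)$ and $F'(it)$ to be real. This is implicit in the monotonicity hypothesis, since it presupposes $t^{m+1}F'(it)$ is real. Then $F(it)$ is real too, by the integral formula.

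\textbf{Convergence.} The integral converges by cuspidality. The convergence of $\int_t^\infty s^{-m-1}\,\dd s$ uses $m>0$.

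\textbf{Strict versus weak monotonicity.} If strict monotonicity is intended, strict decrease of $G$ gives strict inequality in the integral bound, because $G(s)<G(t)$ for $s>t$ on a set of positive measure. This makes the derivative strictly negative.

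Alternatively, one can write $t^mF(it) = 2\pi t^m\int_t^\infty s^{-m-1}G(s)\,\dd s$ and substitute $s = tu$. This gives
\[
t^mF(it) = 2\pi\int_1^\infty u^{-m-1}G(tu)\,\dd u,
\]
which is manifestly decreasing in $t$ because $G$ is, with no differentiation needed. I would present this version.

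The only real obstacle is justifying the integral representation: that the boundary term vanishes at infinity, and that $F(it)$ is real. Both follow from the $q$-expansion $F = \sum_{n\ge1}a_nq^n$, which gives exponential decay as $t\to\infty$.
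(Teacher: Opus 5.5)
Your proof is correct and is essentially the paper's argument. The paper sets $g_a(t) = F(it) - a^m F(iat)$ for $a>1$, notes $\frac{\dd}{\dd t} g_a(t) = -2\pi\bigl(F'(it) - a^{m+1}F'(iat)\bigr) \le 0$ by the hypothesis, and uses cuspidality ($g_a(t)\to 0$ as $t\to\infty$) to get $g_a \ge 0$, i.e.\ $t^mF(it)\ge (at)^mF(iat)$; this is exactly your identity $t^mF(it) = 2\pi\int_1^\infty u^{-m-1}G(tu)\,\dd u$ compared at $t$ and $at$, phrased as ``decreasing with limit $0$ implies nonnegative'' instead of writing out the integral.
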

\begin{proof}
    For $a > 1$ and $t > 0$, define
    \[
    g_a(t) := F(it) - a^{m} F(iat).
    \]
    Then
    \[
    \frac{\dd}{\dd t} g_a(t) = (-2 \pi) (F'(it) - a^{m+1} F'(iat)),
    \]
    hence $g_a(t)$ is monotone decreasing for all $a > 1$.
    By cuspidality, we have $\lim_{t \to \infty} g_a(t) = 0$ and $g_a(t) \ge 0$, hence $t \mapsto t^m F(it)$ is monotone decreasing.
\end{proof}

\begin{corollary}
    \label{cor:polymod_extd2}
    The following functions are monotone decreasing:
    \begin{align*}
        t \mapsto t^{7} X_{8, 2}(it), \\
        t \mapsto t^{9} X_{10, 2}(it), \\
        t \mapsto t^{11} X_{12, 2}(it), \\
        t \mapsto t^{13} X_{14, 2}(it).
    \end{align*}
\end{corollary}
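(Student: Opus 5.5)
The plan is to apply Proposition \ref{prop:polymod_der} to each $X_{w,2}$, which reduces the claim to the monotonicity of $t\mapsto t^{w}X_{w,2}'(it)$ for $w=8,10,12,14$. Each $X_{w,2}$ is a cusp form, since it is extremal and vanishes at infinity, so the proposition applies. The key input is a relation between derivatives of depth $2$ extremal forms and depth $1$ extremal forms of weight $w+2$. Such relations are typical of Kaneko--Koike theory and are presumably available from \cite{lee2024algebraic} or by direct computation. For instance, we expect $X_{w,2}'$ to be a positive multiple of $X_{w+2,1}$, or of $X_{w+2,1}$ times a positive form, possibly up to a $\Delta$ factor.

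\textbf{Step 1.} For each $w\in\{8,10,12,14\}$, write $X_{w,2}$ explicitly in terms of $E_2,E_4,E_6$. Compute $X_{w,2}'$ using Ramanujan's formulas \eqref{eqn:ramanujan} and compare with $X_{w+2,1}$. Both $X_{w,2}'$ and $X_{w+2,1}$ have weight $w+2$. The form $X_{w,2}'$ has depth $3$ a priori, but its vanishing order at infinity is at least that of $X_{w,2}$. We should check by computation in which cases $X_{w,2}'=c_w X_{w+2,1}$ exactly with $c_w>0$. Dimension counting for the extremal property suggests this may hold, since the vanishing orders can match: $X_{8,2}$ vanishes to order $3$, and so does $X_{10,1}$.

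\textbf{Step 2.} Feed in the known monotonicity of $t^{w}X_{w+2,1}(it)$:
\begin{itemize}
\item $t^{8}X_{10,1}(it)$ by Lemma \ref{lem:X101weak};
\item $t^{11}X_{12,1}(it)$ by Corollary \ref{cor:polymod_X121}, hence also $t^{10}X_{12,1}(it)$, because $t^{-1}$ is decreasing and the function is positive;
\item $t^{13}X_{14,1}(it)$ by Corollary \ref{cor:polymod_X141}, hence $t^{12}X_{14,1}(it)$;
\item $t^{14}X_{16,1}(it)$, where $X_{16,1}=E_4X_{12,1}$ by \eqref{eqn:recXw4}.
\end{itemize}
For the last case, $t^{4}E_4(it)$ is monotone decreasing: Corollary \ref{cor:polymod_E4} gives this for $t^4(E_4(it)-1)$, and $t^4$ is increasing but small, so this needs care. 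Alternatively, use $t^{11}X_{12,1}$ together with $t^3E_4(it)$, and handle $t^3E_4$ by checking that $3E_4-2\pi tE_4'>0$, using modular transformation to reduce to $t\le1$. Since the exponents needed are $w=m+1$, we need $t^{w}X_{w,2}'$ decreasing, i.e.\ exponent $w$ on a weight $w+2$ depth $1$ form. This is weaker than the sharp exponent $w+1$, so there is slack.

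\textbf{Step 3.} Conclude with Proposition \ref{prop:polymod_der}. If the identity in Step 1 is not exact, I would instead apply Proposition \ref{prop:polymod_der} twice, reducing to $t^{w+1}X_{w,2}''(it)$, or apply Proposition \ref{prop:polymod_tangent} directly to $X_{w,2}'$. This requires computing $(m+1)(F')^2-mF''F$ for $F=X_{w,2}'$ and showing positivity via factorizations like those in Corollaries \ref{cor:polymod_X61}--\ref{cor:polymod_X141}.

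The main obstacle is Step 1: finding a clean positive identity for $X_{w,2}'$ in terms of depth $1$ extremal forms whose monotonicity is already established. The largest case, $w=14$, is where I expect the most friction.
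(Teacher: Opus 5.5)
Your reduction via Proposition \ref{prop:polymod_der} to the monotonicity of $t^{w}X_{w,2}'(it)$ is the right frame, but the identity you plan to feed into it cannot exist. Nothing else in the proposal replaces it.

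Write $X_{w,2}=f_0+f_1E_2+f_2E_2^2$ with $f_2\neq 0$. Ramanujan's formulas \eqref{eqn:ramanujan} show that the $E_2^3$-coefficient of $X_{w,2}'$ is $\frac{w-2}{12}f_2\neq 0$. So $X_{w,2}'$ has depth exactly $3$, whereas $X_{w+2,1}$ has depth $1$. Since both have weight $w+2$, any multiplier would have weight $0$, i.e.\ be a constant, so no variant of this identity helps. Your vanishing-order heuristic is also off: $X_{8,2}$ vanishes to order $2$, not $3$, and $X_{10,1}=\sum n\sigma_7(n)q^n$ vanishes to order $1$.

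The fallbacks (iterating Proposition \ref{prop:polymod_der}, or applying Proposition \ref{prop:polymod_tangent} to $X_{w,2}'$) come with no identity or positivity statement, so they do not close the gap. The $w=14$ sub-plan fails independently. For every $m>0$, $t^mE_4(it)\to\infty$ as $t\to\infty$, so no such function is decreasing. Moreover, $3E_4-2\pi tE_4'>0$ is the condition for $t^3E_4(it)$ to be \emph{increasing}.

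The missing idea is that $X_{w,2}'$ is a positive combination of \emph{products} of lower extremal forms, consistent with depth $3=2+1$. By \cite{lee2024algebraic},
\[
X_{8,2}'=2X_{4,2}X_{6,1},\quad X_{10,2}'=\tfrac{8}{9}X_{4,2}X_{8,1}+\tfrac{10}{9}X_{6,1}^2,\quad X_{12,2}'=3X_{6,1}X_{8,2},\quad X_{14,2}'=3X_{4,2}X_{12,1}.
\]
Products of positive decreasing functions are decreasing, so the exponents add:
\begin{itemize}
\item $t^3X_{4,2}$ (Lemma \ref{lem:polymod_X42}) times $t^5X_{6,1}$ gives $t^8X_{8,2}'$;
\item $t^5X_{6,1}$ times the just-obtained $t^7X_{8,2}$ gives $t^{12}X_{12,2}'$;
\item $t^3X_{4,2}$ times $t^{11}X_{12,1}$ gives $t^{14}X_{14,2}'$.
\end{itemize}
Proposition \ref{prop:polymod_der} then concludes each case. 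This is the paper's proof.

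Be aware that this bookkeeping falls one short for $X_{10,2}$. The term $X_{4,2}X_{8,1}$ only receives $t^{3+6}$, and no split of the exponent $10$ works:
\begin{itemize}
\item $t^aX_{4,2}(it)$ is decreasing only for $a\le 3$, because $t^3X_{4,2}(it)\to\frac{1}{24\pi}$ as $t\to 0^+$;
\item $t^bX_{8,1}(it)$ is not decreasing for any $b\ge 7$: the case $b=7$ is Remark \ref{remark:X81X101}, and for $b>7$ we have $t^bX_{8,1}(it)\to 0$ as $t\to0^+$ by Proposition \ref{prop:Xw1limit}.
\end{itemize}
Term by term one therefore gets only $t^9X_{10,2}'$, hence only $t^8X_{10,2}$. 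The exponent $9$ needs a separate argument that $t^{10}X_{4,2}X_{8,1}(it)$, or all of $t^{10}X_{10,2}'(it)$, is decreasing.
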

\begin{proof}
    The identity $X_{8, 2}' = 2 X_{4, 2} X_{6, 1}$ \cite[eq. (46)]{lee2024algebraic} and Lemma \ref{lem:polymod_X42}, Corollary \ref{cor:polymod_X61} implies that $t^{8} X_{8, 2}'(it) = 2 \cdot t^3 X_{4, 2}(it) \cdot t^5 X_{6, 1}(it)$ is monotone decreasing, and Proposition \ref{prop:polymod_der} implies that $t^{7} X_{8, 2}(it)$ is also monotone decreasing.
    The other cases follow from the same argument with the identities $X_{10, 2}' = \frac{8}{9} X_{4, 2} X_{8, 1} + \frac{10}{9} X_{6, 1}^2, X_{12, 2}' = 3 X_{6, 1} X_{8, 2}$, $X_{14, 2}' = 3X_{4, 2} X_{12, 1}$ \cite[eq. (47) (48), (49)]{lee2024algebraic}, Lemma \ref{lem:polymod_X42}, Lemma \ref{lem:X81weak}, Corollary \ref{cor:polymod_X61}, Corollary \ref{cor:polymod_X121}, and Proposition \ref{prop:polymod_der}.
\end{proof}

\subsection{Monotonicity of $t^{w-1}X_{w, 1}(it)$}

Based on Corollary \ref{cor:polymod_X61}, \ref{cor:polymod_X121}, \ref{cor:polymod_X141} and numerical experiments, we conjecture the following monotonicity property of extremal quasimodular forms of depth $1$.

\begin{conjecture}
    \label{conj:Xw1mono}
    For all even $w \ge 6$ but $w = 8, 10$, the function 
    \[
    t \mapsto t^{w-1} X_{w, 1}(it)
    \]
    is monotone decreasing for $t > 0$.
\end{conjecture}
Although we cannot prove this conjecture, we can prove the following weaker claims.
First, the function is monotone decreasing near $t = 0$.

\begin{proposition}
    \label{prop:Xw1monot0}
    For $w \ge 12$, the function $t \mapsto t^{w-1} X_{w, 1}(it)$ is monotone decreasing near $t = 0$.
\end{proposition}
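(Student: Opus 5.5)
Since $X_{w,1}$ is cuspidal, the natural move is to work at the cusp $i\infty$ by substituting $t \mapsto 1/t$. The identity \eqref{eqn:Xw1trans} then rewrites the function as
\[
t^{w-1}X_{w,1}(it)\Big|_{t\mapsto 1/t} = t^{1-w} X_{w,1}\!\left(\frac{i}{t}\right) = (-1)^{\frac w2}\left[t X_{w,1}(it) - \frac{6}{\pi} B_{w-2}(it)\right].
\]
Call this $\Phi(t)$. Monotone decreasing near $t=0$ for the original function is equivalent to $\Phi$ being monotone increasing for large $t$, which I will show by establishing $\Phi'(t)>0$ for all sufficiently large $t$.

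Next I expand everything in $q=e^{-2\pi t}$. Write $X_{w,1} = \xi_1 q + O(q^2)$, where $\xi_1 = \alpha_{w,1}+\beta_{w-2,1}-24\beta_{w-2,0}$ comes from $A_w + E_2B_{w-2}$ using $\alpha_{w,0}+\beta_{w-2,0}=0$. Also $B_{w-2} = \beta_{w-2,0} + \beta_{w-2,1} q + O(q^2)$. Differentiating gives
\[
\Phi'(t) = (-1)^{\frac w2}\left[\xi_1 e^{-2\pi t}(1 - 2\pi t) + 12\,\beta_{w-2,1} e^{-2\pi t}\right] + O(t e^{-4\pi t}).
\]
As $t\to\infty$ the dominant piece is $-2\pi (-1)^{w/2}\xi_1\, t e^{-2\pi t}$. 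So it suffices to show that $(-1)^{w/2}\xi_1<0$, or, if $\xi_1=0$, to go one order further.

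To compute $\xi_1$ I will use Lemma \ref{lem:n1coeff}, treating the three residues of $w$ mod 6 separately. Writing $w=6k, 6k+2, 6k+4$, the lemma expresses $\alpha_{w,1}$ and $\beta_{w-2,1}$ as explicit rational multiples of $\alpha_{w,0} = -\beta_{w-2,0}$. For example, when $6\mid w$,
\[
\xi_1 = \alpha_{w,0}\left[-\frac{12(w-3)(w+4)}{w-6} + \frac{12(w-1)w}{w-6} + 24\right] = \alpha_{w,0}\cdot\frac{12(-w + 12 + w - 6) + 24(w-6)}{w-6}\cdot(\ldots).
\]
This should simplify to a clean positive multiple of $\alpha_{w,0}$. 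For the other residues, the index shifts $w+2$ and $w+4$ in the lemma play the same role, with $\beta$ at index $w$ or $w+2$. The signs of $\alpha_{w,0}$ then come from \eqref{eqn:alphaw2}, \eqref{eqn:alphaw4} and the closed form \eqref{eqn:alphawclosed}.

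An important cross-check is that $\xi_1$ is just the first Fourier coefficient of $X_{w,1}$, which is positive since extremal forms are normalized. So the real content is the sign of $(-1)^{w/2}$ combined with the sign bookkeeping. If the leading term has the wrong sign, the $12\beta_{w-2,1}$ term cannot rescue the argument, because it lacks the factor $t$. I therefore expect instead that $(-1)^{w/2}\beta_{w-2,0}>0$, consistent with the positive limit \eqref{eqn:extremallimit2}, and that this forces $(-1)^{w/2}=-1$ relative to the normalization.

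The main obstacle is exactly this sign bookkeeping across the three residue classes. Lemma \ref{lem:n0coeff}, \eqref{eqn:alphawclosed} and Lemma \ref{lem:n1coeff} must be combined carefully to confirm that $(-1)^{w/2}\xi_1<0$ in every case $w\ge 12$. The hypothesis $w \ge 12$ enters through Lemma \ref{lem:n1coeff}; note that it excludes $w=8,10$, where monotonicity fails. Once the sign is established, the $O(te^{-4\pi t})$ remainder is negligible, which gives $\Phi'(t)>0$ for large $t$ and hence the claim.
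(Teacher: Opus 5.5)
There is a genuine gap. Your claim that $\xi_1$ is positive ``since extremal forms are normalized'' is false for $w \ge 12$. The normalization fixes the \emph{leading} coefficient, and for $w\ge 12$ the form $X_{w,1}$ vanishes to order at least $2$ at $i\infty$. For example, $X_{12,1}=\frac{1}{1050}\sum_{n}(n\sigma_9(n)-\tau(n))q^n$ has zero $q$-coefficient. Your own formula confirms this: for $6\mid w$, Lemma~\ref{lem:n1coeff} gives
\[
\frac{\xi_1}{\alpha_{w,0}}=\frac{-12(w^2+w-12)+12(w^2-w)+24(w-6)}{w-6}=0,
\]
and the same cancellation happens in the other two residue classes. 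So the term $-2\pi(-1)^{w/2}\xi_1\,te^{-2\pi t}$ that you treat as dominant is identically zero. The condition $(-1)^{w/2}\xi_1<0$ you set out to verify is therefore not the relevant one. Had $\xi_1$ been positive, that condition would even fail for $w=12,16$, where $(-1)^{w/2}=1$; this is what your remark about ``forcing $(-1)^{w/2}=-1$'' is detecting. Your expected sign is also reversed. From \eqref{eqn:alphawclosed} and $\beta_{w-2,0}=-\alpha_{w,0}$ one gets $(-1)^{w/2}\beta_{w-2,0}<0$, and this is what makes the limit \eqref{eqn:extremallimit2} positive (note the minus sign there).

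The repair is exactly the paper's argument, and your displayed formula for $\Phi'(t)$, which agrees with the paper's, already contains it. Since $X_{w,1}(it)$ and $X_{w,1}'(it)$ are $O(e^{-4\pi t})$, you get $\Phi'(t)=12(-1)^{w/2}\beta_{w-2,1}e^{-2\pi t}+O(te^{-4\pi t})$. So the term you said ``cannot rescue the argument'' is in fact the whole argument. What must be shown is $(-1)^{w/2}\beta_{w-2,1}>0$. This follows from $(-1)^{w/2}\beta_{w-2,0}<0$ together with the negativity of the three ratios in Lemma~\ref{lem:n1coeff}:
\[
-\tfrac{12(w-1)w}{w-6},\qquad -\tfrac{12(w-12)(w+1)}{w-6},\qquad -\tfrac{12(w^2-21w+120)}{w-6}.
\]
The second ratio vanishes at $w=12$, i.e.\ $\beta_{12,1}=0$, so $w=14$ is degenerate. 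The paper handles $w=12,14$ by Corollaries~\ref{cor:polymod_X121} and~\ref{cor:polymod_X141} and then assumes $w\ge 16$. Alternatively, for $w=14$ your ``$t$-factor'' reasoning does work one order higher, at $q^2$, where $(-1)^{7}=-1$ gives the right sign.
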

\begin{proof}
    By Corollary \ref{cor:polymod_X121} and \ref{cor:polymod_X141}, we may assume that $w \ge 16$.
    It is enough to show that the derivative is negative near $t = 0$, which is equivalent to
    \[
    2 \pi t X_{w, 1}'(it) - (w-1) X_{w, 1}(it) > 0
    \]
    near $t = 0$.
    By substituting $t$ with $1/t$ and using \eqref{eqn:Xw1trans} and \eqref{eqn:dXw1trans}, this is equivalent to
    \begin{align*}
        &\frac{2\pi}{t} X_{w, 1}'\left(\frac{i}{t}\right) - (w-1) X_{w, 1}\left(\frac{i}{t}\right) \\
        &= (-1)^{\frac{w}{2}} \left[-2 \pi t^{w+1} X_{w,1}'(it) + t^w \left(w X_{w, 1}(it) + 12 B_{w-2}'(it)\right) - \frac{2t^{w-1}}{\pi}\cdot 3(w-1) B_{w-2}(it) \right. \\
        &\quad\left.- (w-1)t^w X_{w, 1}(it) + \frac{6(w-1)t^{w-1}}{\pi}B_{w-2}(it)\right] \\
        &= (-1)^{\frac{w}{2}} t^w \left( -2 \pi t X_{w, 1}'(it) + X_w(it) + 12 B_{w-2}'(it)\right) > 0
    \end{align*}
    for sufficiently large $t$.
    Since $X_{w, 1}(it)$ and $X_{w, 1}'(it)$ are both $O(e^{-4 \pi t})$, the positivity for large enough $t$ is equivalent to $\lim_{t \to \infty} (-1)^{\frac{w}{2}} B_{w-2}'(it) = (-1)^{\frac{w}{2}} \beta_{w-2,1} > 0$, which follows from Lemma \ref{lem:n0coeff} and \ref{lem:n1coeff}.
\end{proof}
Also, we can prove monotonicity of $t^{a_w} X_{w, 1}(it)$ for the smaller exponent $a_w = w - \lceil w/6 \rceil \le w - 1$, using the recurrence relations in Theorem \ref{thm:Xw1rec} (Theorem \ref{thm:weakconj}).

\begin{theorem}
    \label{thm:weakconj}
    For all even $w \ge 6$, the function 
    \[
    t \mapsto t^{w-\lceil w/6 \rceil} X_{w, 1}(it)
    \]
    is monotone decreasing for $t > 0$.
\end{theorem}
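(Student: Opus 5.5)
Write $a_w := w - \lceil w/6 \rceil$. The plan is strong induction on $w$, combining the recurrences of Theorem \ref{thm:Xw1rec} with Proposition \ref{prop:polymod_der}.

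\textbf{Base cases.} For $w = 6,8,10$ we have $a_w = 5, 6, 8$. These are exactly Corollary \ref{cor:polymod_X61}, Lemma \ref{lem:X81weak} and Lemma \ref{lem:X101weak}.

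\textbf{Induction step.} Let $w \ge 12$ with $6 \mid w$, and assume the claim for all smaller even weights. All $X_{v,1}$ are cusp forms, so by Proposition \ref{prop:polymod_der} it suffices to show that $t^{a_v+1} X_{v,1}'(it)$ is monotone decreasing for $v \in \{w, w+2, w+4\}$.

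Each of \eqref{eqn:extqmf_rec1}--\eqref{eqn:extqmf_rec3} writes $X_{v,1}'$ as a combination, with \emph{positive} coefficients, of products $X_{u,1}X_{u',1}$ with $u+u' = v+2$, $u \in \{6,8,10\}$ and $u' < v$. Each factor is positive on the imaginary axis, because $X_{u,1}$ is completely positive (a known positivity result for extremal depth-one forms from \cite{lee2024algebraic}). A product of two positive decreasing functions is again decreasing, and a positive combination of decreasing functions is decreasing. So it suffices to split the exponent as $a_v + 1 = e + e'$ with $e \ge a_u$ and $e' \ge a_{u'}$, where we take exactly $e = a_u$ and $e' = a_{u'}$.

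\textbf{Exponent check.} For this split to work we need $a_u + a_{u'} \ge a_v + 1$ in every term. Note that $t^{e}F(it)$ decreasing implies $t^{e''}F(it)$ decreasing for any $e'' \le e$, since we multiply by the decreasing positive factor $t^{e''-e}$. So we may freely lower exponents, which is why the inequality only needs to hold with $\ge$. Write $w = 6k$.
\begin{itemize}
\item For $v = w$: $a_w + 1 = 5k+1$. The terms are $X_{6,1}X_{w-4,1}$ with $5 + (6k-4-k) = 5k+1$, and $X_{8,1}X_{w-6,1}$ with $6 + 5(k-1) = 5k+1$. Both are equal, so the bound is tight.
\item For $v = w+2$: $a_{w+2} + 1 = 6k+2-(k+1)+1 = 5k+2$. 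The terms are $X_{6,1}X_{w-2,1}$ with $5 + (6k-2-k) = 5k+3$, and $X_{8,1}X_{w-4,1}$ with $6 + (5k-4) = 5k+2$. Both suffice.
\item For $v = w+4$: $a_{w+4} + 1 = 5k+4$. The terms are $X_{6,1}X_{w,1}$ with $5+5k$; $X_{8,1}X_{w-2,1}$ with $6+5k-2 = 5k+4$; and $X_{10,1}X_{w-4,1}$ with $8 + 5k-4 = 5k+4$. All suffice.
\end{itemize}
The only factor with $u' = w$ is $X_{w,1}$ itself, in the $v = w+4$ case. Handle the three weights in the order $w$, $w+2$, $w+4$, so that $X_{w,1}$ is already established before it is used.

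\textbf{Main obstacle.} The step needing care is the positivity of every factor $X_{u,1}(it)$, needed for the product argument. I would cite complete positivity of $X_{w,1}$ for $6 \mid w$ and its shifts $X_{w+2,1}$, $X_{w+4,1}$ from \cite{lee2024algebraic}; alternatively it follows inductively, since \eqref{eqn:extqmf_rec1}--\eqref{eqn:extqmf_rec3} express each derivative as a positive combination of products of positive-coefficient forms. The remaining checks are the tight exponent cases above, where the room is exactly zero.
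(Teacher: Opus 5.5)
Your proof is correct and takes essentially the same route as the paper: base cases $w=6,8,10$, then induction over the blocks $\{w,w+2,w+4\}$ with $6\mid w$ via \eqref{eqn:extqmf_rec1}--\eqref{eqn:extqmf_rec3} and Proposition~\ref{prop:polymod_der}; your inequalities $a_u+a_{u'}\ge a_v+1$ are exactly the direction of the paper's min-recurrences for $a_w$ that gets used, and you helpfully make explicit the positivity of the factors $X_{u,1}(it)$, which the paper's product argument uses without stating. One small slip: when splitting $a_v+1=e+e'$ you need $e\le a_u$ and $e'\le a_{u'}$ (not $\ge$, and not exact equality in the non-tight cases), which is what your subsequent remark about lowering exponents actually uses.
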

\begin{proof}
    Let $a_w := w - \lceil w/6 \rceil$.
    By induction, one can easily check that $a_w$ satisfy the following recurrence relations for all $w \ge 12$ with $w \equiv 0 \pmod{6}$:
    \begin{align}
        a_w &= \min \{a_{w-4} + 4, a_{w-6} + 5\} \label{eqn:awrec1} \\
        a_{w+2} &= \min \{a_{w-2} + 4, a_{w-4} + 5\} \label{eqn:awrec2} \\
        a_{w+4} &= \min \{a_w + 4, a_{w-2} + 5, a_{w-4} + 7\} \label{eqn:awrec3}
    \end{align}
    For example, when $w = 6k$, we have $a_w = 5k$, $a_{w-4} + 4 = 6k - 4 - k + 4 = 5k$, and $a_{w-6} + 5 = 5(k-1) + 5 = 5k$, hence \eqref{eqn:awrec1} holds.
    The other cases can be checked similarly.
    Then the theorem follows for $w = 6, 8, 10$ by Corollary \ref{cor:polymod_X61}, Lemma \ref{lem:X81weak}, and Lemma \ref{lem:X101weak}.
    Assume that it holds for $w -6, w-4, w-2$ for some $w \ge 12$ with $w \equiv 0 \pmod{6}$.
    By induction hypothesis and Lemma \ref{lem:X81weak}, both of the functions
    \[
    t^5 X_{6, 1}(it) \cdot t^{a_{w-4}} X_{w-4, 1}(it), \quad t^6 X_{8, 1}(it) \cdot t^{a_{w-6}} X_{w-6, 1}(it)
    \]
    are monotone decreasing, and by \eqref{eqn:extqmf_rec1} and \eqref{eqn:awrec1}, the function
    \[
    t^{a_w + 1} X_{w, 1}'(it) := \frac{5w}{72} \cdot t^5 X_{6, 1}(it) \cdot t^{a_w - 4} X_{w-4, 1}(it) + \frac{7w}{72} \cdot t^6 X_{8, 1}(it) \cdot t^{a_w - 5} X_{w-6, 1}(it)
    \]
    is also monotone decreasing.
    Therefore, by Proposition \ref{prop:polymod_der}, the function $t^{a_w} X_{w, 1}(it)$ is also monotone decreasing.
    We can prove similarly for $t^{a_{w+2}} X_{w+2, 1}(it)$ and $t^{a_{w+4}} X_{w+4, 1}(it)$ using \eqref{eqn:extqmf_rec2}, \eqref{eqn:extqmf_rec3}, \eqref{eqn:awrec2}, and \eqref{eqn:awrec3}.
\end{proof}

\section{Applications}

\subsection{Positive quasimodular forms of higher levels}
\label{subsec:application_posqmf}

Let $F$ be a quasimodular form of level $1$.
Then for $N \in \bZ_{\ge 1}$, $F^{[N]}(z) := F(Nz)$ is a positive quasimodular form of level $\Gamma_0(N)$ \cite[Proposition 3.9]{lee2024algebraic}.
If we further assume that $F'$ is positive, i.e. $t \mapsto F(it)$ is monotone decreasing for $t > 0$, then $F(z) - F^{[N]}(z) = F(z) - F(Nz)$ is also positive.
The following proposition shows that one can obtain a stronger result if the monotonicity of $t \mapsto t^m F(it)$ is known for some $m > 0$.

\begin{corollary}
    \label{cor:polymod_posdif}
    Let $F$ be a quasimodular form and let $m \in \bR_{> 0}$.
    Assume that $t \mapsto t^m F(it)$ is monotone decreasing for $t > 0$.
    Then for $N \in \bZ_{\ge 1}$, $F(z) - N^{m} F(Nz)$ is a positive quasimodular form of level $\Gamma_0(N)$.
    Also, if $\lim_{t \to 0^+} t^m F(it)$ exists and is positive, then the constant $N^{m}$ is optimal, i.e. for any $c > N^{m}$, $F(z) - c F(Nz)$ is not positive.
\end{corollary}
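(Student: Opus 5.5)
The positivity claim should follow directly from the monotonicity hypothesis. Fix $N \ge 1$ and $t > 0$. Set $\phi(t) := t^m F(it)$; by hypothesis $\phi$ is monotone decreasing on $(0,\infty)$. Since $Nt \ge t$, we get $\phi(Nt) \le \phi(t)$, that is,
\[
N^m t^m F(iNt) \le t^m F(it).
\]
Dividing by $t^m > 0$ gives $F(it) - N^m F(iNt) \ge 0$. Here $F(iNt)$ is the value of $F^{[N]}(z) = F(Nz)$ at $z = it$. So the function $G(z) := F(z) - N^m F(Nz)$ takes nonnegative real values on the positive imaginary axis.

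It then remains to check that $G$ is a quasimodular form of level $\Gamma_0(N)$. The paper already notes that $F^{[N]}$ is quasimodular of level $\Gamma_0(N)$, citing \cite[Proposition 3.9]{lee2024algebraic}. Level $1$ forms are a fortiori quasimodular for $\Gamma_0(N) \subseteq \SL_2(\bZ)$. A linear combination of quasimodular forms of the same weight is again quasimodular, so $G$ is. Realness of $F(it)$ is implicit in the monotonicity hypothesis, since $F(it)$ must be real for $t^m F(it)$ to be monotone.

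For optimality, let $L := \lim_{t\to 0^+} t^m F(it) > 0$ and suppose $c > N^m$. Rewrite the quantity as
\[
t^m\bigl(F(it) - cF(iNt)\bigr) = \phi(t) - \frac{c}{N^m}\,\phi(Nt).
\]
As $t \to 0^+$ we also have $Nt \to 0^+$, so both $\phi(t)$ and $\phi(Nt)$ tend to $L$. The right-hand side therefore tends to $L(1 - c/N^m) < 0$. Hence $F(it) - cF(iNt) < 0$ for all sufficiently small $t > 0$, and $F(z) - cF(Nz)$ is not positive.

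There is no real obstacle in this argument. The only point needing care is that both terms share the same limit $L$, which is immediate from the existence of $\lim_{t\to0^+}\phi(t)$. Notably, the optimality part does not need the monotonicity hypothesis at all.
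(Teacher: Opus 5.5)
Your proof is correct and follows the paper's argument: positivity comes from comparing $\phi(t)$ with $\phi(Nt)$ for $Nt\ge t$, and optimality comes from the limit $L>0$ as $t\to 0^+$. The paper phrases the optimality step as the ratio limit $F(it)/F(iNt)\to N^m$ rather than your difference limit $L(1-c/N^m)<0$, which is the same idea; your version also avoids having to note separately that $F(iNt)>0$ for small $t$.
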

\begin{proof}
    Since $t^{m} F(it)$ is monotone decreasing,
    \[t^m F(it) - (Nt)^m F(iNt) = t^m (F(it) - N^m F(iNt))\]
    is positive for $t > 0$.
    If $\lim_{t \to 0^+} t^m F(it) = L > 0$, then
    \[
    \lim_{t \to 0^+} \frac{F(it)}{F(iNt)} = \lim_{t \to 0^+} \frac{N^m \cdot t^m F(it)}{(Nt)^m F(iNt)} = \frac{N^m \cdot L}{L} = N^m
    \]
    which shows the optimality of the constant.
\end{proof}

\begin{example}
    \label{ex:polymod_level}
    From Lemma \ref{lem:polymod_X42}, \ref{lem:polymod_D2} and Corollary \ref{cor:polymod_X61}, \ref{cor:polymod_X121}, we obtain the following positive quasimodular forms by taking $N = 2$ in Corollary \ref{cor:polymod_posdif}:
    \begin{align*}
        P_1(z) := X_{4, 2}(z) - 8 X_{4, 2}(2z) &= \sum_{n \ge 1} n \left(\sigma_1(n) - 4 \sigma_1\left(\frac{n}{2}\right)\right) q^n \in \QM_{4}^{2, +}(\Gamma_0(2)) \\
        P_2(z) := \frac{-E_2(z) + 5E_2(2z) - 4E_2(4z)}{24} &= \sum_{n \ge 1} \left(\sigma_1(n) - 5 \sigma_1\left(\frac{n}{2}\right) + 4\sigma_1\left(\frac{n}{4}\right)\right) q^n \in \QM_{2}^{1, +}(\Gamma_0(4)) \\
        P_3(z) := X_{6, 1}(z) - 32 X_{6, 1}(2z) &= \sum_{n \ge 1} n \left(\sigma_3(n) - 16 \sigma_3\left(\frac{n}{2}\right)\right) q^n \in \QM_{6}^{1, +}(\Gamma_0(2)) \\
        P_4(z) := X_{12, 1}(z) - 2^{11} X_{12, 1}(2z) &= \frac{1}{1050}\sum_{n \ge 1} \left[n\left(\sigma_9(n) - 2^{10} \sigma_9\left(\frac{n}{2}\right)\right) - \left(\tau(n) - 2^{11}\tau\left(\frac{n}{2}\right)\right) \right]q^n \in \QM_{12}^{1, +}(\Gamma_0(2)) \\
    \end{align*}
    Note that none of these quasimodular forms are completely positive.
    For example, the $n$-th coefficient of $P_1(z)$ for odd $n$ is $n \sigma_1(n)$, which is positive.
    For even $n = 2^k \cdot m$ with $k \ge 1$ and $m$ odd is 
    \[
    n\left(\sigma_1(n) - 4\sigma_1\left(\frac{n}{2}\right)\right) = n \cdot (\sigma_1(2^k) \sigma_1(m) - 4 \sigma_1(2^{k-1}) \sigma_1(m)) = n \cdot (-2^{k+1} + 3) \sigma_1(m) < 0.
    \]
    In particular, taking the negation of the form with substitution $z \mapsto z + \frac{1}{2}$ gives a completely positive quasimodular form of level $\Gamma_0(4)$:
    \[
    - P_1\left(z + \frac{1}{2}\right) = -X_{4, 2} \left(z + \frac{1}{2}\right) + 8 X_{4, 2} \left(2z\right) = \sum_{n \ge 1} n \left(\sigma_1(n) - 4 \sigma_1\left(\frac{n}{2}\right)\right) (-1)^{n+1} q^n \in \QM_{4}^{2, ++}(\Gamma_0(4)).
    \]
    Similarly, the $n$-th coefficient of $P_3(z)$ is positive if and only if $n$ is odd, and we get a completely positive quasimodular form of level $\Gamma_0(4)$:
    \[
    - P_3\left(z + \frac{1}{2}\right) = -X_{6, 1} \left(z + \frac{1}{2}\right) + 32 X_{6, 1} \left(2z\right) = \sum_{n \ge 1} n \left(\sigma_3(n) - 16 \sigma_3\left(\frac{n}{2}\right)\right) (-1)^{n+1} q^n \in \QM_{6}^{1, ++}(\Gamma_0(4)).
    \]
    For $P_2(z)$, one can check that the $n$-th coefficient is positive for odd $n$, and negative for $n = 2m$ for odd $m$.
    When $n = 2^k \cdot m$ with $k \ge 2$, the $n$-th coefficient is
    \[
    (\sigma_1(2^k) - 5 \sigma_1(2^{k-1}) + 4 \sigma_1(2^{k-2})) \sigma_1(m) = ((2^{k+1} - 1) - 5 (2^k - 1) + 4 (2^{k-1} - 1))\sigma_1(m) = 3 \cdot 2^{k-1} \sigma_1(m) > 0.
    \]
    Thus $n$-th coefficient of $P_2(z)$ is positive if and only if $n \not \equiv 2 \pmod{4}$.
    If we write $P_2(z) = \sum_{n \ge 1} a_n q^n$, then
    \begin{align*}
        P_2\left(\frac{z}{2} + \frac{1}{4}\right) &= i a_1 q^{1/2} - a_2 q - i a_3 q^{3/2} + a_4 q^2 + i a_5 q^{5/2} - a_6 q^3 - i a_7 q^{7/2} + a_8 q^4 + \cdots \\
        P_2\left(\frac{z}{2} - \frac{1}{4}\right) &= -i a_1 q^{1/2} - a_2 q + i a_3 q^{3/2} + a_4 q^2 - i a_5 q^{5/2} - a_6 q^3 + i a_7 q^{7/2} + a_8 q^4 + \cdots
    \end{align*}
    This shows that the following quasimodular forms are completely positive:
    \begin{align*}
        &\frac{1}{2} \left(P_2(z) - P_2\left(z + \frac{1}{2}\right)\right) = \frac{- E_2(z) + E_2\left(z + \frac{1}{2}\right)}{48} = \sum_{k \ge 0} \sigma_1(2k + 1) q^{2k + 1}  \\
        &\frac{1}{14} \left(P_2\left(\frac{z}{2} + \frac{1}{4}\right) + P_2 \left(\frac{z}{2} - \frac{1}{4}\right)\right) = \frac{1}{24} (6 E_2(4z) - 5 E_2(2z) - E_2(z))  \\
        &= \frac{1}{2} \sum_{n \equiv 0\,(\mathrm{mod}{4})} \left(\sigma_1(n) - 5 \sigma_1\left(\frac{n}{2}\right) + 4 \sigma_1\left(\frac{n}{4}\right)\right) q^{n/2} + \frac{1}{2} \sum_{n \equiv 2\,(\mathrm{mod}{4})} \left(5 \sigma_1\left(\frac{n}{2}\right) - \sigma_1(n)\right)q^{n/2} 
    \end{align*}
    where the simplification is done using \eqref{eqn:e2_level2}.
    Both forms are of level $\Gamma_0(4)$.

    In case of $P_4(z)$, we can use Deligne's bound for $\tau(n)$ to study signs of the coefficients.
    The $n$-th coefficient of $P_4(z)$ for odd $n$ is positive, since
    \[
    n \sigma_9(n) - \tau(n) > n \sigma_9(n) - \sigma_0(n) n^{\frac{11}{2}} \ge n^{10} - n^{\frac{13}{2}} > 0.
    \]
    by the trivial estimates $\sigma_9(n) > n^9$, $\sigma_0(n) \le n$ and Deligne's bound $|\tau(n)| < \sigma_0(n) n^{\frac{11}{2}}$.
    If $n = 2^k \cdot m$ with $k \ge 1$ and $m$ odd, then we can estimate the $n$-th coefficient using Deligne's bound again:
    \begin{align*}
        &n \left(\sigma_9(2^k) - 2^{10} \sigma_9(2^{k-1})\right) \sigma_9(m) - \left(\tau(2^k) - 2^{11} \tau(2^{k-1})\right) \tau(m) \\
        &= n \cdot \frac{-2^{9(k+1)} + 1023}{511} \cdot \sigma_9(m) - (\tau(2^{k+1}) + 25 \tau(2^{k})) \tau(m) \\
        &< 2^k  \cdot \frac{-2^{9(k+1)} + 1023}{511} \cdot m \sigma_9(m) + ((k+2) 2^{\frac{11(k+1)}{2}} + 25 (k+1) 2^{\frac{11k}{2}}) m^{\frac{13}{2}}
    \end{align*}
    and the last term is negative for $k \ge 2$, since
    \begin{align*}
        2^k \cdot \frac{2^{9(k+1)} - 1023}{511} >  2^{\frac{11k}{2}} ((2^{\frac{11}{2}} + 25)k + (2^{\frac{13}{2}} + 25)), \quad m \sigma_9(m) > m^{10} \ge m^{\frac{13}{2}}
    \end{align*}
    holds for $k \ge 2$ and all odd $m \ge 1$.
    Note that the recurrence relation $\tau(2^{k+1}) = \tau(2)\tau(2^k) - 2^{11} \tau(2^{k-1}) = -24 \tau(2^k) - 2048 \tau(2^{k-1})$ is also used in the above estimation.
    When $k=1$ and $n = 2m$, the $n$-th coefficient is
    \[
    n (\sigma_9(2) - 2^{10} \sigma_9(1)) - (\tau(2) - 2^{11}\tau(1)) \tau(m) = -1022 m \sigma_9(m) + 2072 \tau(m)
    \]
    which is positive for $m = 1$, and negative for $m \ge 2$ by Deligne's bound.
\end{example}

\begin{example}
    By Theorem \ref{thm:weakconj}, for each even $w \ge 6$ and $N \ge 2$,
    \[
    X_{w, 1}(z) - N^{w - \lceil w/6 \rceil} X_{w, 1}(Nz)
    \]
    is a positive quasimodular form of level $\Gamma_0(N)$.
    If Conjecture \ref{conj:Xw1mono} holds, then we can take larger exponent $w - 1$ instead of $w - \lceil w/6 \rceil$.
\end{example}

\begin{example}
    \label{ex:polymod_level2_d2}
    From Corollary \ref{cor:polymod_extd2}, we obtain the following positive quasimodular forms:
    \begin{align*}
        \widetilde{X}_{8, 2}(z) := X_{8, 2}(z) - 2^{7} X_{8, 2}(2z) & \in \QM_{8}^{2, +}(\Gamma_0(2)) \\
        \widetilde{X}_{10, 2}(z) := X_{10, 2}(z) - 2^{9} X_{10, 2}(2z) & \in \QM_{10}^{2, +}(\Gamma_0(2)) \\
        \widetilde{X}_{12, 2}(z) := X_{12, 2}(z) - 2^{11} X_{12, 2}(2z) & \in \QM_{12}^{2, +}(\Gamma_0(2)) \\
        \widetilde{X}_{14, 2}(z) := X_{14, 2}(z) - 2^{13} X_{14, 2}(2z) & \in \QM_{14}^{2, +}(\Gamma_0(2)).
    \end{align*}
\end{example}

One may take smaller exponent $m$ so that $F(z) - N^m F(Nz)$ becomes completely positive. Such example can be found when Fourier expansions of $F$ can be easily expressed in terms of divisor sums or Ramanujan tau function - see Appendix \ref{appendix:comppos} for some examples of this type.

\subsection{Another proof of a modular form inequality for optimality of Leech lattice packing}
\label{subsec:application_packingineq}

By using the positive quasimodular forms constructed in the previous subsection, we can give an alternative proof of one of the modular form inequalities \cite[Lemma A.2]{cohn2017sphere} used in the proof of optimality of Leech lattice packing.

Define $F \in \QM_{16}^{2}(\SL_2(\bZ))$ and $G \in \cM_{14}(\Gamma(2))$ as
\begin{align}
    F &:= 49 E_2^2 E_4^3 - 25 E_2^2 E_6^2 - 48 E_2 E_4^2 E_6 - 25 E_4^4 + 49 E_4 E_6^2 \label{eqn:F} \\
    G &:= H_2^5 (2 H_2^2 + 7 H_2 H_4 + 7 H_4^2) \label{eqn:G}
\end{align}
where $H_2 = \Theta_2^4$ and $H_4 = \Theta_4^4$ are the fourth powers of Jacobi theta functions.
Then the inequality \cite[Lemma A.2]{cohn2017sphere} is equivalent to
\[
F(it) > \frac{432}{\pi^2} G(it)
\]
for $t > 0$ \cite[eq. (60)]{lee2024algebraic}.
The author gave an algebraic proof of this inequality, by proving monotonicity of the quotient $F(it)/G(it)$ \cite[Proposition 6.5]{lee2024algebraic} and showing that the limit $\lim_{t \to 0^+} F(it)/G(it) = 432/\pi^2$ \cite[Proposition 6.4]{lee2024algebraic}.
The monotonicity part was proved by using the differential equations satisfied by $F$ and $G$ \cite[eq. (64), (65)]{lee2024algebraic}.
In this section, we give another proof of the monotonicity, by using the positive quasimodular forms constructed in the previous subsection.

It is enough to show that
\[
\cL_{1, 0} := F' G - F G' = (\partial_{14} F) G - F (\partial_{14} G)
\]
is positive.
Using Sage, one can check that $\cL_{1, 0}$ can be factored as
\begin{equation}
\label{eqn:d24ineqfactor}
    \cL_{1, 0} = \frac{105}{8} \cdot H_{2}^{5} H_{4}^{2} (H_{2} + H_{4})^{2} \cdot L,
\end{equation}
where $L := K_{10}E_2^2 + K_{12}E_2 + K_{14}$ is a quasimodular form of weight $14$, level $\Gamma(2)$, and depth $2$ with
\begin{align*}
    K_{10} &= -2 (23 H_2^4 + 46 H_2^3 H_4 + 54 H_2^2 H_4^2 + 16 H_2 H_4^3 + 8 H_4^4) (H_2 + 2H_4), \\
    K_{12} &= -2 (10 H_2^4 + 35 H_2^3 H_4 + 3 H_2^2 H_4^2 -64 H_2 H_4^3 -32 H_4^4)(H_2^2 + H_2 H_4 + H_4^2), \\
    K_{14} &= (26 H_2^6 + 78 H_2^5 H_4 + 177 H_2^4 H_4^2 + 182 H_2^3 H_4^3 + 51 H_2^2 H_4^4 - 48 H_2 H_4^5 - 16 H_4^6)(H_2 + 2H_4).
\end{align*}
Here $K_{w}$'s for $w \in \{10, 12, 14\}$ are weight $w$, level $\Gamma(2)$ modular forms.
We observe that $K_{w}$'s are all invariant under $z \mapsto z + 1$: in other words, they are modular forms of level $\Gamma_0(2) \subset \Gamma(2)$.
The ring of modular forms of level $\Gamma_0(2)$ is generated by two elements $A = H_2^2$ and $B = H_2 + 2H_4$, where both have positive Fourier coefficients:
\[
    A = 4 \left(\sum_{n \geq 0} r_4(2n+1) q^{n + \frac{1}{2}}\right)^{2}, \quad B = 2\left(1 + \sum_{n \geq 1}r_4(2n) q^{n}\right).
\]
(Here $r_4(n)$ is the number of ways to write $n$ as a sum of four squares.)
Let $X_{w, s}^{[2]}(z) := X_{w, s}(2z)$.
One can write $L$ as a combination of $X_{8, 2}^{[2]}, \widetilde{X}_{8, 2}, X_{10, 2}^{[2]}, \widetilde{X}_{10, 2}, X_{12, 2}^{[2]} , \widetilde{X}_{12, 2}$ and $A, B$ by:
\begin{equation}
\label{eqn:d24ineqLcomb}
    L = a_1 X_{8, 2}^{[2]}AB + a_2 \widetilde{X}_{8, 2}AB + a_3 X_{10, 2}^{[2]}A + a_4 \widetilde{X}_{10, 2}A + a_5 X_{12, 2}^{[2]} B + a_6 \widetilde{X}_{12, 2} B
\end{equation}
where all the coefficients $a_1, \dots, a_6$ are positive:
\begin{align*}
    a_1 = 78278400,  \quad
    a_2 = 550800, \quad
    a_3 = 90823680, \quad
    a_4 = 116640, \quad
    a_5 = 678813696000, \quad
    a_6 = 331776000,
\end{align*}
and the positivity of $L$ follows from those of $X_{8, 2}, X_{10, 2}, X_{12, 2}$ \cite[Proposition 4.6]{lee2024algebraic} and Example \ref{ex:polymod_level2_d2}.

\subsection{Algebraic proof of a modular form inequality for universal optimality of Leech lattice}
\label{subsec:application_universalineq}

The last application is the new algebraic proof of the inequality \eqref{eqn:inequality3b}, which was the main motivation of this work.
The inequality is closely related to the extremal quasimodular form $X_{12, 1}$, and becomes a direct consequence of Corollary \ref{cor:polymod_X121}.
\begin{lemma}
    \label{lem:inequality3b}
    The inequality \eqref{eqn:inequality3b} is equivalent to the monotonicity of the function $t \mapsto t^{11} X_{12, 1}(it)$ on $(0, \infty)$.
    Especially, it follows from Corollary \ref{cor:polymod_X121}.
\end{lemma}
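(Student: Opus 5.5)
The plan is to express $\what{\cE}_{1,0}$ and $\what{\cE}_{1,1}$ in terms of $X_{12,1}$ and its derivative, and then read \eqref{eqn:inequality3b} as the sign of $\frac{\dd}{\dd t}\bigl(t^{11}X_{12,1}(it)\bigr)$.

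For $\what{\cE}_{1,0}$, use $X_{12,1} = \frac{-12E_2E_4E_6 + 5E_4^3 + 7E_6^2}{3991680}$ and $720\Delta = \frac{5}{12}(E_4^3 - E_6^2)$. Then
\[
-E_2E_4E_6 + E_6^2 + 720\Delta = \tfrac{1}{12}\bigl(-12E_2E_4E_6 + 5E_4^3 + 7E_6^2\bigr) = 332640\, X_{12,1}.
\]
This gives $\what{\cE}_{1,0} = 12\pi i \cdot 332640\, X_{12,1}$.

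For $\what{\cE}_{1,1}$, compute $X_{12,1}'$ with Ramanujan's formulas \eqref{eqn:ramanujan}. I expect, and would verify by direct expansion, that
\[
E_2^2E_4E_6 - 2E_2E_6^2 - 1728E_2\Delta + E_4^2E_6 = c\, X_{12,1}'
\]
for an explicit negative constant $c$. The depth-2 part $E_2^2E_4E_6$ should be matched by the $-E_2' E_4E_6$ contribution, and $1728E_2\Delta = E_2(E_4^3 - E_6^2)$ should combine with the remaining terms. Since $X_{12,1}' = \frac{1}{1050}\sum n(n\sigma_9(n) - \tau(n))q^n$ has leading term $\frac{1}{1050}q$, comparing $q$-coefficients on the left fixes $c$. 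This identity is the only real computation in the argument. I would check the $E_2^2$, $E_2$ and $E_2^0$ components separately, using that $E_2^0$, $E_2^1$, $E_2^2$ are independent over modular forms, and compare against $\partial_{11}X_{12,1}$ and Grabner's recurrence \eqref{eqn:recXw2} as a cross-check. Matching the ratio of constants is where the main obstacle lies. The whole equivalence hinges on the ratio of the constants in $\what{\cE}_{1,0}$ and $\what{\cE}_{1,1}$ being exactly the ratio needed to produce the exponent $11$.

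Now substitute $\tau = it$. This gives
\[
i\what{\cE}_1(it) = i\bigl(it\cdot 2\pi^2 c\, X_{12,1}'(it) + 12\pi i\cdot 332640\, X_{12,1}(it)\bigr) = -2\pi^2 c\, t X_{12,1}'(it) - 12\pi\cdot 332640\, X_{12,1}(it).
\]
On the other hand,
\[
\frac{\dd}{\dd t}\bigl(t^{11}X_{12,1}(it)\bigr) = t^{10}\bigl(11X_{12,1}(it) - 2\pi t X_{12,1}'(it)\bigr).
\]
If the constants work out as expected, $c = -11\cdot 12\cdot 332640/(2\cdot 11)\cdot\frac{1}{\pi}$-free, i.e. $2\pi^2 c : 12\pi\cdot 332640 = -2\pi : 11$ up to a positive factor. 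Then $i\what{\cE}_1(it)$ equals a positive constant times $\pi\bigl(2\pi t X_{12,1}'(it) - 11X_{12,1}(it)\bigr)$. This is exactly $-t^{-10}$ times the derivative above, up to that positive factor.

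Hence $i\what{\cE}_1(it) > 0$ for all $t > 0$ if and only if $t^{11}X_{12,1}(it)$ has negative derivative on $(0,\infty)$. Corollary \ref{cor:polymod_X121}, together with the strict inequality $(m+1)(F')^2 - mF''F > 0$ from the proof of Proposition \ref{prop:polymod_tangent}, which gives strict decrease, then yields \eqref{eqn:inequality3b}.
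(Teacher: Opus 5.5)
Your strategy is the paper's: express $\what{\cE}_{1,0}$ and $\what{\cE}_{1,1}$ as constant multiples of $X_{12,1}$ and $X_{12,1}'$, then recognize $i\what{\cE}_1(it)$ as a negative multiple of $t^{-10}\frac{\dd}{\dd t}\bigl(t^{11}X_{12,1}(it)\bigr)$. Your identity $\what{\cE}_{1,0} = 12\pi i\cdot 332640\,X_{12,1}$ is correct. However, the lemma's entire content is the exact identity for $\what{\cE}_{1,1}$ with the right constant. That is exactly the step you leave as ``if the constants work out as expected'', so as written the proposal does not prove the statement.

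Moreover, the normalization you suggest would fail. $X_{12,1}$ vanishes to order $2$ at $i\infty$: the $n=1$ coefficient is $\sigma_9(1)-\tau(1)=0$, while $2\sigma_9(2)-\tau(2)=1050$. So $X_{12,1} = q^2+O(q^3)$ and $X_{12,1}' = 2q^2 + O(q^3)$, not $\frac{1}{1050}q+\cdots$. Comparing $q^1$-coefficients therefore gives $0=0$ and does not determine $c$. Your displayed value $-11\cdot 12\cdot 332640/(2\cdot 11) = -1995840$ is also wrong; the ratio condition you write next forces $c = -12\cdot 332640/11 = -362880$.

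The missing computation is short, and the route you sketch via \eqref{eqn:ramanujan} works once carried out. Differentiate $3991680\,X_{12,1} = -12E_2E_4E_6 + 5E_4^3 + 7E_6^2$ term by term. The coefficients of $E_2^2E_4E_6$, $E_2E_4^3$, $E_2E_6^2$, $E_4^2E_6$ are $-1-4-6$, $6+5$, $4+7$, $1-5-7$, i.e.
\[
3991680\, X_{12,1}' = 11\bigl(-E_2^2E_4E_6 + E_2E_4^3 + E_2E_6^2 - E_4^2E_6\bigr).
\]
The paper instead reads this off from $X_{12,1}' = 2X_{6,1}X_{8,1}$; Grabner's \eqref{eqn:recXw2} with the stated formula for $X_{14,1}$ gives the same result. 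Since $1728\Delta = E_4^3 - E_6^2$, the bracket in $\what{\cE}_{1,1}$ equals $E_2^2E_4E_6 - E_2E_4^3 - E_2E_6^2 + E_4^2E_6 = -362880\,X_{12,1}'$. Hence
\[
i\what{\cE}_1(it) = 3991680\pi\Bigl(\frac{2\pi t}{11}X_{12,1}'(it) - X_{12,1}(it)\Bigr) = -362880\pi\, t^{-10}\frac{\dd}{\dd t}\bigl(t^{11}X_{12,1}(it)\bigr).
\]
With this in place the rest of your argument is fine. Your observation that \eqref{eqn:inequality3b} needs strict negativity of the derivative is also a legitimate sharpening of the paper's phrase ``equivalent to the monotonicity''. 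The proof of Proposition \ref{prop:polymod_tangent} supplies this strictness, using also $X_{12,1}'(it)>0$.
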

\begin{proof}
    Recall that (see \cite[Table 1]{lee2024algebraic})
    \[
    X_{12, 1} = \frac{-12 E_2 E_4 E_6 + 5 E_4^3 + 7 E_6^2}{3991680} \Leftrightarrow-12 E_2 E_4 E_6 + 5 E_4^3 + 7 E_6^2 = 3991680 X_{12, 1}.
    \]
    By \cite[Theorem 4.3]{lee2024algebraic}, we have
    \[
    X_{12, 1}' = 2 X_{6, 1} X_{8, 1} = 2 \cdot \frac{E_2 E_4 - E_6}{720} \cdot \frac{- E_2 E_6 + E_4^2}{1008} = \frac{11}{3991680} (-E_2^2 E_4 E_6 + E_2 E_4^3 + E_2 E_6^2 - E_4^2 E_6).
    \]
    Using this, one can express $\what{\cE}_{1, 0}$ and $\what{\cE}_{1, 1}$ in terms of $X_{12, 1}$ and $X_{12, 1}'$:
    \begin{align*}
        \what{\cE}_{1, 0} &= 12 \pi i \left(- E_2 E_4 E_6 + E_6^2 + \frac{5}{12} (E_4^3 - E_6^2)\right) = \pi i \cdot (12 E_2 E_4 E_6 + 5 E_4^3 + 7 E_6^2) = \pi i \cdot 3991680 X_{12, 1} \\
        \what{\cE}_{1, 1} &= 2 \pi^2 (E_2^2 E_4 E_6 - 2 E_2 E_6^2 - E_2 (E_4^3 - E_6^2) + E_4^2 E_6) = -\frac{2 \pi^2}{11} \cdot 3991680 X_{12, 1}'
    \end{align*}
    and
    \[
        i\what{\cE}_{1}(it) = -t \what{\cE}_{1, 1}(it) + i \what{\cE}_{1, 0}(it) = 3991680 \pi \left(\frac{2 \pi t}{11} X_{12, 1}'(it) - X_{12, 1}(it)\right) = -362880 \pi t^{-10} \cdot \frac{\dd}{\dd t}(t^{11} X_{12, 1}(it))
    \]
    which proves the claim.
\end{proof}

Figure \ref{fig:3b} shows the graphs of $t \mapsto t^{11} X_{12, 1}(it)$ and $t \mapsto X_{12, 1}(it) / X_{12, 1}'(it)$, where the former is monotone decreasing and the latter is monotone increasing.
By \eqref{eqn:extremallimit2}, we have
\[
\lim_{t \to 0^+} t^{11} X_{12, 1}(it) = \frac{6(-1)^{\frac{12}{2}}\beta_{10}}{\pi} = \frac{1}{55400 \pi}
\]
and the tangent line of the latter at $t = 0$ is $l(t) := \frac{2\pi t}{11}$.
We leave it as future work to give simpler proofs of the other inequalities (1), (2), and (3a) of \cite[p. 1067]{cohn2022universal}.

\begin{figure}[h]
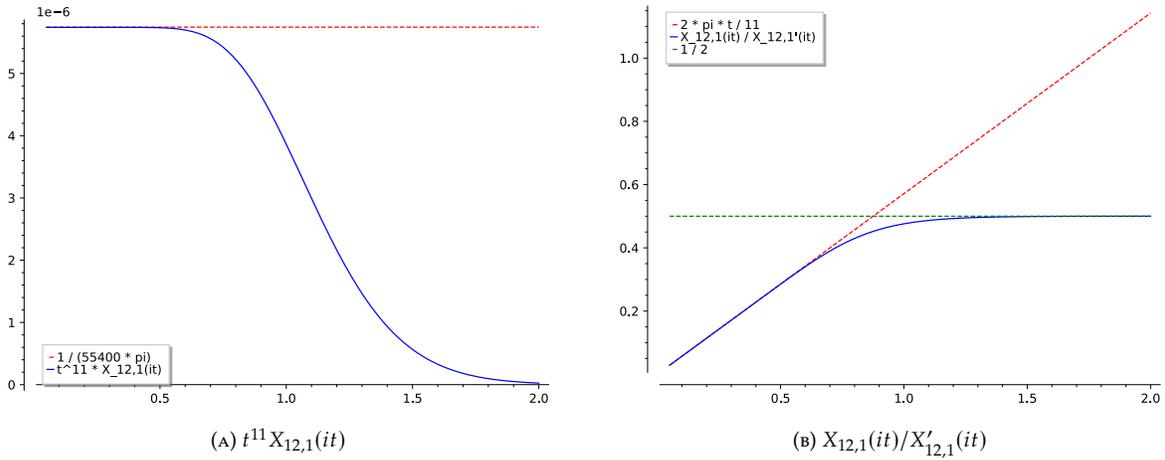
%
    \centering
    \subfloat[\centering $t^{11}X_{12, 1}(it)$]{{\includesvg[width=0.45\textwidth]{images/3b_1.svg}}}%
    \qquad
    \subfloat[\centering $X_{12, 1}(it) / X_{12, 1}'(it)$]{{\includesvg[width=0.45\textwidth]{images/3b_2.svg}}}%
    \caption{Graphs of $t \mapsto t^{11} X_{12, 1}(it)$ and $t \mapsto X_{12, 1}(it) / X_{12, 1}'(it)$ with asymptotes as $t \to 0^+$ and $t \to \infty$.}%
    \label{fig:3b}%
\end{figure}

\section{Conclusion}

In this paper, we studied monotonicity of the functions of the form $t \mapsto t^m F(it)$ where $F$ is a quasimodular form and $m > 0$, and presented several applications of it.
In particular, we show that one can construct positive quasimodular forms of level $> 1$ using the monotonicity, where most of the examples we found in Section \ref{subsec:application_posqmf} are not completely positive.
We computed the natural density of the set $\{n : a_n > 0\}$ for the Fourier coefficients of these examples, and it is natural to ask if the density is bounded from below by a positive constant for every positive quasimodular form.

\begin{question}
    \label{question:density}
    Is there a positive constant $c \in (0, 1)$ such that for every positive quasimodular form $F = \sum_{n \ge n_0} a_n q^n$, the natural density of the set $\{n : a_n > 0\}$ is at least $c$?
\end{question}

In Example \ref{ex:polymod_level}, the natural density of the set $\{n : a_n > 0\}$ for each $P_1, P_2, P_3, P_4$ is $\frac{1}{2}, \frac{3}{4}, \frac{1}{2}, \frac{1}{2}$ respectively, which shows $c \le \frac{1}{2}$.
We conjecture that $c = \frac{1}{2}$ is indeed the answer to Question \ref{question:density}.
\begin{conjecture}
    \label{conj:density}
    For every positive quasimodular form $F = \sum_{n \ge n_0} a_n q^n$, the natural density of the set $\{n : a_n > 0\}$ exists and is at least $\frac{1}{2}$.
\end{conjecture}
Kane, Krishnamoorthy, and Lau \cite{kane2025conjecture} showed that, when $F$ is a linear combination of cusp forms or their derivatives, prime-indexed coefficients have infinitely many sign changes.
In particular, there is no completely positive quasimodular form of the form $\sum_{j=0}^{s} D^j F_j$ with all $F_j$ cusp forms.
However, it is still possible that such quasimodular form is positive.
For example, consider the weight $16$ and depth $2$ quasimodular form
\[
F = X_{4, 2} \Delta = \frac{1}{312} (E_4 \Delta - \Delta'') = q^2 - 18 q^3 + 1240 q^4 - 220 q^5 - 1620 q^6 + 11676 q^7 + \cdots.
\]
This is a linear combination of (derivatives of) cusp forms which is also positive.
It seems that the natural density of the set $\{n : a_n > 0\}$ for $F$ is $\frac{1}{2}$.

\appendix

\section{Completely positive quasimodular forms of higher levels}
\label{appendix:comppos}

In Section \ref{subsec:application_posqmf}, we constructed positive quasimodular forms of level $> 1$ by using the monotonicity of the functions of the form \eqref{eqn:tmf}.
Here we give some examples of \emph{completely} positive quasimodular forms of level $>1$, directly constructed from level 1 quasimodular forms by analyzing the Fourier coefficients.

Let $F(z) = \sum_{n \ge 1} a_n q^n$ be a quasimodular form of level $1$.
Then $F(z) - c F(Nz) = \sum_{n \ge 1} (a_n - c a_{n/N}) q^n$ is completely positive if and only if
\begin{equation}
    \inf_{n \ge 1} \frac{a_{Nn}}{a_n} \ge c.
    \label{eqn:infcoeffrat}
\end{equation}
In particular, the infimum \eqref{eqn:infcoeffrat} is the largest possible constant $c$ such that $F(z) - c F(Nz)$ remains completely positive.

\begin{proposition}
    \label{prop:extYpos}
    For given weight $w$ and depth $s$, define
    \[
        Y_{w, s}(z) := X_{w, s}(z) - 2^{w-s} X_{w, s}(2z).
    \]
    Then $Y_{4, 2}, Y_{8, 2}, Y_{10, 2}$ and $Y_{12,2}$ are completely positive.
\end{proposition}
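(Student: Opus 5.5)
The plan is to work with the criterion \eqref{eqn:infcoeffrat} directly. Write $X_{w,s}=\sum_{n\ge1}a_nq^n$. Then $Y_{w,s}$ is completely positive exactly when $a_n\ge0$ for all $n$ and $a_{2n}\ge 2^{w-s}a_n$ for all $n\ge1$. Nonnegativity of the $a_n$ for $X_{4,2},X_{8,2},X_{10,2},X_{12,2}$ is known from \cite[Proposition 4.6]{lee2024algebraic}, so only the ratio inequality needs proof.

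For $X_{4,2}$ I would use the explicit expansion $a_n=n\sigma_1(n)$ from the proof of Lemma \ref{lem:polymod_X42}. The inequality becomes $\sigma_1(2n)\ge2\sigma_1(n)$. Writing $n=2^km$ with $m$ odd and using multiplicativity, this reads $2^{k+2}-1\ge 2(2^{k+1}-1)$, which is true.

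For the remaining forms I would not use the derivative identities $X_{8,2}'=2X_{4,2}X_{6,1}$ and so on. Those reduce the claim to an analogous statement for products, and the convolution estimate $c_{2n}\ge\sum_{i+j=n}a_{2i}b_{2j}$ throws away the odd--odd terms. A quick check shows this loses exactly a factor $2$: $X_{6,1}$ only satisfies the ratio bound with $2^4$, not $2^5$. Instead I would use that in these weights the depth-$2$ quasimodular forms are spanned by derivatives of Eisenstein series, plus $\Delta$ in weight $12$:
\begin{align*}
X_{8,2}&=\lambda_1 E_4''+\lambda_2 E_6'+\lambda_3 E_8,\\
X_{10,2}&=\mu_1 E_6''+\mu_2 E_8'+\mu_3 E_{10},\\
X_{12,2}&=\nu_1 E_8''+\nu_2 E_{10}'+\nu_3 E_{12}+\nu_4\Delta.
\end{align*}
The constants would be determined by the vanishing order at $\infty$ and the normalization, computed in Sage from \cite[Table 1]{lee2024algebraic}. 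This gives $a_n$ as an explicit combination such as $\alpha n^2\sigma_3(n)+\beta n\sigma_5(n)+\gamma\sigma_7(n)$, and in weight $12$ an additional multiple of $\tau(n)$ together with the $\sigma_{11}$ term. Here the Eisenstein constant term $\lambda_3$ (resp. $\mu_3,\nu_3$) must cancel, since $X_{w,s}$ has no constant term.

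Next I would write $n=2^km$ with $m$ odd. Every divisor-sum term factors as $\sigma_j(n)=\sigma_j(2^k)\sigma_j(m)$, so $a_{2n}-2^{w-s}a_n$ becomes a combination of the quantities $m^2\sigma_3(m)$, $m\sigma_5(m)$, $\sigma_7(m)$ with coefficients that are explicit exponential polynomials in $2^k$. I would then bound these using $\sigma_j(m)\le\sigma_{j'}(m)m^{j-j'}$ for $j\ge j'$, together with $\sigma_{j}(m)\ge m^j$, so that the dominant positive term (coming from the highest $\sigma$ with the largest power of $2^k$) absorbs the negative ones. The result would be an inequality in $k$ and $m$ that holds for all $k$ beyond a small threshold. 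The finitely many small $k$, and possibly small $m$, would be checked by hand or in Sage.

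For $X_{12,2}$ the $\tau$ terms would be handled as in the treatment of $P_4$ in Example \ref{ex:polymod_level}. I would use Deligne's bound $|\tau(n)|\le\sigma_0(n)n^{11/2}$ and the Hecke recurrence for $\tau(2^k)$ to show that the $\tau$ contribution is dominated by the Eisenstein part, which grows like $n^{11}$.

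I expect the main obstacle to be the sharpness of the constant $2^{w-s}$. Since $w-s=w-2$ is exactly the exponent appearing near $t\to0$, the leading terms should nearly cancel in $a_{2n}-2^{w-s}a_n$, and the positivity must come from lower-order divisor-sum terms. The delicate cases are $k$ small, especially $k=0$ ($n$ odd), where one must compare $\sigma_j(2)$ factors carefully and cannot rely on a crude dominance estimate. I would first compute the exact two-variable expressions in Sage to see which term carries the sign before choosing the bounds.
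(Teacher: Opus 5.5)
Your proposal is correct and follows the paper's proof: explicit divisor-sum formulas for $X_{8,2},X_{10,2},X_{12,2}$ from derivatives of Eisenstein series (plus $\Delta$ in weight $12$), factorization over $n=2^km$, and Deligne's bound together with the Hecke recurrence for the $\tau$ terms. Your constant-term remark already forces the $E_{12}$ coefficient, and hence any $\sigma_{11}$ term, to be $0$; and where the paper phrases $w=8,10$ as a ratio bound via monotonicity of $b_{2^k}=\sigma_5(2^k)/(2^k\sigma_3(2^k))$, in your difference form the $n\sigma_{w-3}(n)$ term collapses exactly to a positive multiple of $2^k m\sigma_{w-3}(m)$ and the $-n^2\sigma_{w-5}(n)$ term becomes positive, so no absorption step or small-$k$ casework is needed outside the $\tau$ part.
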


\begin{proof}
Let $a_{w, n}$ be the $n$-th Fourier coefficient of $X_{w, 2}$.
It is enough to show that
\begin{equation}
\label{eqn:infcoeffrat2}
    \inf_{n} \frac{a_{w, 2n}}{a_{w, n}} \geq 2^{w-2}.
\end{equation}
We can express the extremal forms in terms of (derivatives of) Eisenstein series and obtain explicit formulas for the Fourier coefficients as
\begin{align}
    X_{4, 2} &= -\frac{E_2'}{24} = \sum_{n \geq 1} n\sigma_1(n) q^n, \\
    X_{8, 2} &= -\frac{E_6'}{15120} - \frac{E_4''}{7200} = \sum_{n \geq 2} \left(\frac{n \sigma_5(n) - n^2 \sigma_3(n)}{30}\right) q^n, \\
    X_{10, 2} &= \frac{(E_4^2)'}{60480} + \frac{E_6''}{63504}  = \sum_{n \geq 2} \left(\frac{n \sigma_7(n) - n^2 \sigma_5(n)}{126}\right) q^n.
\end{align}
(Note that $E_4^2 = 1 + 480 \sum_{n \geq 1} \sigma_7(n)q^n$ is the weight $8$ normalized Eisenstein series.)
Using these, we can compute the infimum \eqref{eqn:infcoeffrat2} precisely.
Recall that the divisor sum functions are multiplicative.
Write $n = 2^k m$ for $k\geq 0$ and odd $m$.
Then the infimum for $w = 4$ can be computed as
\begin{align*}
    \inf_{n\geq 1} \frac{2n\sigma_1(2n)}{n \sigma_1(n)} = \inf_{n \geq 1} \frac{2^{k+1}m \sigma_1(2^{k+1})\sigma_1(m)}{2^k m \sigma_1(2^k)\sigma_1(m)} = \inf_{k \geq 0} \frac{2(2^{k+2} - 1)}{2^{k+1} - 1} = 4 = 2^{4-2}.
\end{align*}
The cases $w = 8$ and $w = 10$ are similar but slightly more complicated.
When $w = 8$, we first observe that $n \sigma_5(n) - n^2 \sigma_3(n) \geq 0 \Leftrightarrow b_{n} := \frac{\sigma_5(n)}{n \sigma_3(n)} \geq 1$ for all $n \geq 2$, due to the complete positivity of $X_{8, 2}$\footnote{The inequality also follows from the trivial estimate $n^a \leq \sigma_a(n) \leq 1^a + 2^a + \cdots + n^a$.}. 
Again, we write $n = 2^k m$ with odd $m$. Then
\begin{align*}
    \frac{a_{8, 2n}}{a_{8, n}} = \frac{2n \sigma_5(2n) - 4n^2 \sigma_3(2n)}{n \sigma_5(n) - n^2 \sigma_3(n)} 
    = \frac{2^{k+1}m \sigma_5(2^{k+1})\sigma_5(m) - 2^{2k+2}m^2 \sigma_3(2^{k+1})\sigma_3(m)}{2^k m \sigma_5(2^k)\sigma_5(m) - 2^{2k}m^2 \sigma_3(2^k)\sigma_3(m)} 
    = \frac{4\sigma_3(2^{k+1})}{\sigma_3(2^k)} \cdot \frac{b_{2^{k+1}} b_{m} - 1}{b_{2^k} b_{m} - 1}.
\end{align*}
It is easy to check that $b_{2^k}$ is increasing in $k \geq 0$.
Hence
\begin{align*}
    \frac{a_{8, 2n}}{a_{8, n}} \geq \frac{4 \sigma_3(2^{k+1})}{\sigma_3(2^k)} \cdot \frac{b_{2^{k+1}}}{b_{2^k}} = \frac{2 \sigma_5(2^{k+1})}{\sigma_5(2^k)} = \frac{2(2^{5(k+2)} - 1)}{2^{5(k+1)} - 1} > 2^{6} = 2^{8-2}.
\end{align*}
The constant $2^6$ is also optimal by considering $n = 2^k$ with $k \to \infty$.
The same argument also works for $w = 10$.

For $w = 12$, $X_{12, 2}$ can be expressed as
\[
X_{12, 2} = \frac{1}{18000} \left(\frac{17 \Delta}{21} - \frac{E_{10}'}{308} - \frac{E_8''}{288}\right) = \frac{1}{18000} \sum_{n \ge 3}\left(\frac{17 \tau(n)}{21} + \frac{6n\sigma_9(n)}{7} - \frac{5n^2 \sigma_7(n)}{3}\right) q^n =: \sum_{n \ge 3} c_n q^n,
\]
so it is enough to show that $c_{2n} \ge 2^{10} c_n$ for $n \ge 2$, or equivalently,
\begin{equation}
    \frac{17 \tau(2n)}{21} + \frac{12n\sigma_9(2n)}{7} - \frac{10 n^2 \sigma_7(2n)}{3} - 2^{10} \left(\frac{17 \tau(n)}{21} + \frac{6 n \sigma_9(n)}{7} - \frac{5 n^2 \sigma_7(n)}{3}\right) \ge 0. \label{eqn:X122coeffineq}
\end{equation}
Write $n = 2^k m$ with odd $m$.
Then \eqref{eqn:X122coeffineq} is equivalent to
\begin{align}
    &\frac{17 \tau(2^{k+1}) \tau(m)}{21} + \frac{12 \cdot 2^{k} m \sigma_9(2^{k+1}) \sigma_9(m)}{7} - \frac{5 \cdot 2^{2k+2} m^2 \sigma_7(2^{k+1}) \sigma_7(m)}{3} \nonumber \\
    &\quad - 2^{10} \left(\frac{17 \tau(2^k) \tau(m)}{21} + \frac{6 \cdot 2^k m \sigma_9(2^k) \sigma_9(m)}{7} - \frac{5 \cdot 2^{2k} m^2 \sigma_7(2^k) \sigma_7(m)}{3}\right) \nonumber \\
    &= \frac{17\tau(m)}{21} (\tau(2^{k+1}) - 1048 \tau(2^k)) \nonumber \\
    &\quad + \frac{6 \cdot 2^k m \sigma_9(m)}{7} (2 \sigma_9(2^{k+1}) - 2^{10} \sigma_9(2^k)) - \frac{5 \cdot 2^{2k} m^2 \sigma_7(m)}{3} (2^2 \sigma_7(2^{k+1}) - 2^{10} \sigma_7(2^k)) \nonumber \\
    &= \frac{17\tau(m)}{21} (\tau(2^{k+1}) - 1048 \tau(2^k)) \nonumber \\
    &\quad + \frac{6 \cdot 2^{k+1} m \sigma_9(m)}{7} \left(\frac{2^{9(k+2)}-1}{511} - 512 \cdot \frac{2^{9(k+1)}-1}{511}\right) - \frac{5 \cdot 2^{2k+2} m^2 \sigma_7(m)}{3} \left(\frac{2^{7(k+2)}-1}{127} - 256 \cdot \frac{2^{7(k+1)}-1}{127}\right) \nonumber \\
    &=\frac{5}{3} \cdot \frac{128}{127} m^2 \sigma_7(m) 2^{2k+2} \left(2^{7(k+1)} - \frac{255}{128}\right) + \frac{17}{21} \tau(m) \left(\tau(2^{k+1}) - 1048 \tau(2^{k})\right) + \frac{6}{7} m \sigma_9(m) 2^{k+1} \label{eqn:x122coeff}
\end{align}

When $k = 0$, $m \ge 3$ (since $n \ge 2$) and \eqref{eqn:x122coeff} can be bounded from below by
\begin{align}
\frac{2520}{3} m^2 \sigma_7(m) - \frac{17 \cdot 1072}{21} \tau(m) + \frac{12}{7} m\sigma_9(m) > \frac{2520}{3} m^9 - \frac{18224}{21} \sigma_0(m)m^{\frac{11}{2}} + \frac{12}{7} m^{10} \label{eqn:appendix_ineq1}
\end{align}
and the right hand side can be easily checked to be positive for $m \ge 3$ using the estimate $\sigma_0(m) \le m$.
When $k \ge 1$, since the last term of \eqref{eqn:x122coeff} is positive, it is enough to show that
\[
m^2 \sigma_7(m) > |\tau(m)|\quad \text{and}\quad \frac{5}{3} (512 \cdot 2^{9k} - 8 \cdot 2^{2k}) > \frac{17}{21} |\tau(2^{k+1}) - 1048 \tau(2^k)| 
\]
for odd $m$ and $k \ge 0$.
The first inequality easily follows from Deligne's bound and the trivial estimates $\sigma_7(m) \ge m^7$ and $m \ge \sigma_0(m)$:
\[
m^2 \sigma_7(m) \ge m^2 \cdot m^7 = m^9 \ge m^{\frac{13}{2}} = m \cdot m^{\frac{11}{2}} \ge \sigma_0(m) m^{\frac{11}{2}} > |\tau(m)|.
\]
The second inequality can be also checked by using the Deligne's bound $|\tau(2^k)| \le \sigma_0(2^k) 2^{\frac{11}{2}k} = (k+1) 2^{\frac{11}{2}k}$:
\begin{align}
    &\frac{5}{3} (512 \cdot 2^{9k} - 8 \cdot 2^{2k}) - \frac{17}{21} (|\tau(2^{k+1})| + 1048 |\tau(2^{k})|) \nonumber \\
    &\ge \frac{2560}{3} \cdot 2^{9k} - \frac{40}{3} \cdot 2^{2k} - \frac{17}{21} ((k+2) 2^{\frac{11}{2}(k+1)} + 1048 (k+1) 2^{\frac{11}{2}k}) \nonumber \\
    &\ge \frac{2560}{3} \cdot 2^{9k} - \frac{40}{3} \cdot 2^{2k} - \frac{17}{21} ((2^{\frac{11}{2}} + 1048)k + (2^{\frac{13}{2}} + 1048)) 2^{\frac{11}{2}k} \label{eqn:appendix_ineq2}
\end{align}
which is positive for $k \ge 1$.
\end{proof}
Using these, we can reprove that the quasimodular form $L$ in Section \ref{subsec:application_packingineq} is (completely) positive by writing it as
\[
L = a_1' X_{8, 2}^{[2]} AB + a_2' Y_{8, 2} AB + a_3' X_{10, 2}^{[2]} A + a_4' Y_{10, 2} A + a_5' X_{12, 2}^{[2]} + a_6' Y_{12, 2}
\]
where
\begin{align*}
    a_1' = 43027200, \quad
    a_2' = 550800, \quad
    a_3' = 60963840, \quad
    a_4' = 116640, \quad
    a_5' = 339075072000, \quad
    a_6' = 331776000.
\end{align*}

In fact, we conjecture that $Y_{w, 2}$ is completely positive for \emph{all} $w \ne 6$.

\begin{conjecture}
    \label{conj:Yw2cpos}
    For all $w \ne 6$, $Y_{w, 2}(z) = X_{w, 2}(z) - 2^{w-2} X_{w, 2}(2z)$ is completely positive.
\end{conjecture}

This conjecture is stronger than the Kaneko--Koike's conjecture on the complete positivity of $X_{w, 2}$, since we can write $X_{w, 2}$ as
\[
    X_{w, 2}(z) = Y_{w, 2}(z) + 2^{w-2} Y_{w, 2}(2z) + 2^{2(w-2)} Y_{w, 2}(4z) + 2^{3(w-2)} Y_{w, 2}(8z) + \cdots.
\]
Here are $q$-expansions of $Y_{w, 2}$ for $w \le 16$:
\begin{align*}
    Y_{4, 2} &= q + 2q^2 + 12 q^3 + 4 q^4 + 30 q^5 + \cdots, \\
    Y_{8, 2} &= q^2 + 16 q^3 + 38 q^4 + 416 q^5 + 284 q^6 + \cdots, \\
    Y_{10, 2} &= q^2 + \frac{104}{3} q^3 + 134 q^4 + 2480 q^5 + \frac{6796}{3} q^6 + \cdots, \\
    Y_{12, 2} &= q^3 + \frac{51}{2} q^4 + \frac{1422}{5} q^5 + 920 q^6 + 9714 q^7 + \cdots, \\
    Y_{14, 2} &= q^3 + \frac{93}{2} q^4 + 810 q^5 + 3908 q^6 + 54474 q^7 + 93279 q^8 + \cdots, \\
    Y_{16, 2} &= q^4 + \frac{864}{25} q^5 + \frac{2736}{5} q^6 + \frac{188288}{35} q^7 + \frac{107998}{5} q^8 + \frac{1051008}{5} q^9 + \cdots.
\end{align*}

\bibliographystyle{plain} 
\bibliography{refs} 

\end{document}